\documentclass[11pt,reqno]{amsart}

\usepackage[T1]{fontenc}
\usepackage[utf8]{inputenc}
\usepackage{amsmath,amssymb,amsfonts,amsthm}
\usepackage{enumerate}
\usepackage{microtype}
\usepackage{tikz}
\usetikzlibrary{arrows.meta}
\usepackage[hidelinks]{hyperref}

\numberwithin{equation}{section}
\allowdisplaybreaks

\newtheorem{theorem}{Theorem}[section]
\newtheorem{lemma}[theorem]{Lemma}
\newtheorem{corollary}[theorem]{Corollary}
\newtheorem{proposition}[theorem]{Proposition}

\theoremstyle{definition}
\newtheorem*{unnumbereddefinition}{Definition}

\theoremstyle{remark}
\newtheorem{remark}[theorem]{Remark}
\newtheorem*{unnumberedremark}{Remark}

\title[Instability of nonrotating stars]{On the nonlinear instability of nonrotating Stars}
\author{Zhiwu Lin}
\address{School of Mathematical Sciences, Fudan University, Shanghai, China}
\thanks{Z. Lin's research is supported in part by the National Natural Science Foundation of China under Grant 12494544.}
\email{zwlin@fudan.edu.cn}

\author{Xinlin Wu}
\address{School of Mathematics, Georgia Institute of Technology, Atlanta, Georgia, USA}
\email{xwu475@gatech.edu}

\date{}

\begin{document}

\begin{abstract}
We study radial nonlinear instability of compactly supported nonrotating equilibria of the three-dimensional Euler--Poisson system with a physical-vacuum boundary. Let $n^u(\mu)$ denote the radial instability index furnished by the turning-point theory of Lin and Zeng. Under general structural assumptions on the pressure law, suppose that $n^u(\mu)>0$ and that the equilibrium is not a mass extremum, $M'(\mu)\neq0$. Conditional on the existence of a sufficiently regular radial solution on the relevant time interval, we establish two nonlinear escape criteria. First, every perturbation with Hamiltonian strictly below that of the equilibrium exits a fixed neighborhood on a logarithmic time scale controlled by the least unstable linear growth rate. For the subclass of data for which the associated Lyapunov functional is initially nonnegative, we also obtain an explicit exponential lower bound in the weighted displacement norm. Second, sufficiently small perturbations whose Riesz projection onto the finite-dimensional unstable subspace is not too small escape on a logarithmic time scale. The second argument uses the exponential trichotomy of the linearized Hamiltonian flow and an invariant-cone estimate. In the polytropic class, the conditional estimates combine with the radial physical-vacuum local theory to yield unconditional nonlinear instability in the corresponding classical-solution topology. The results complement Jang's nonlinear instability theorem for Lane--Emden stars by treating mechanisms that do not require initial alignment with a leading growing eigenmode and by applying, conditionally, to unstable branches for general equations of state.
\end{abstract}

\subjclass[2020]{Primary 35Q35, 35B35; Secondary 35R35, 76N10, 85A30}
\keywords{Euler--Poisson system, gaseous stars, physical vacuum, nonlinear instability, turning-point principle, Hamiltonian PDE}

\maketitle
\section{Introduction}\label{intro}

\subsection{The model, equilibria, and the turning-point index}
We consider a self-gravitating barotropic gas governed in Eulerian coordinates by the three-dimensional Euler--Poisson system
\begin{align}
  \rho_t+\nabla\cdot(\rho u)&=0, \label{continuity eq}\\
  \rho D_tu&=-\nabla P(\rho)-\rho\nabla V, \label{euler eq}\\
  \Delta V&=4\pi\rho,\qquad \lim_{|x|\to\infty}V(t,x)=0. \label{eq for potential}
\end{align}
Here $\rho\ge0$ is the density, $u\in\mathbb R^3$ is the velocity, $D_t=\partial_t+u\cdot\nabla$ is the material derivative, and $V$ is the self-consistent gravitational potential.  We assume
\begin{equation}\label{property of pressure 1}
 P\in C^1((0,\infty)),\qquad P'(s)>0,
\end{equation}
and normalize the pressure by $\lim_{s\to0+}P(s)=P(0)=0$; this limit exists under the asymptotics below, and subtracting a constant does not change the equations.  We further assume that there are $\gamma_0\in(6/5,2)$ and $C_{\gamma_0}>0$ such that
\begin{equation}\label{property of pressure 2}
 \lim_{s\to0+}s^{1-\gamma_0}P'(s)=C_{\gamma_0}.
\end{equation}
Thus the square of the sound speed has the physical-vacuum asymptotics $P'(\rho)\asymp\rho^{\gamma_0-1}$ near vacuum.  We introduce the internal-energy potential $\Phi$ by
\begin{equation}\label{enthalpy}
 \Phi(0)=\Phi'(0)=0,\qquad \Phi''(s)=\frac{P'(s)}{s};
\end{equation}
its derivative $\Phi'$ is the specific enthalpy.  See \cite{subrahmanijanchandrasekhar_1967_an} for the astrophysical background.

A \emph{nonrotating star} is a compactly supported static solution $(\rho_\mu,0)$ of \eqref{continuity eq}--\eqref{eq for potential}.  Its density is radial by \cite{Gidas1981SymmetryOP}; we write $\mu=\rho_\mu(0)$ for the central density, $S_\mu=B(0,R_\mu)$ for its support, and
\[
 M(\mu)=\int_{\mathbb R^3}\rho_\mu(x)\,dx
\]
for its total mass.  The following facts are recalled from \cite{separable}.

\begin{lemma}[Existence and physical-vacuum behavior]\label{(Existence of nonrotating star)}
Under \eqref{property of pressure 1}--\eqref{property of pressure 2}, there exists $\mu_0>0$ such that, for every $\mu\in(0,\mu_0)$, there is a compactly supported radial equilibrium $(\rho_\mu,0)$ with support $S_\mu=B(0,R_\mu)$ and $\rho_\mu(0)=\mu$.  The profile depends $C^1$-smoothly on $(r,\mu)$.  Moreover, for $r$ sufficiently close to $R_\mu$,
\begin{equation}\label{two-sided-vacuum-asymptotics}
 \rho_\mu(r)\asymp (R_\mu-r)^{\frac1{\gamma_0-1}},\qquad
 |\rho_\mu'(r)|\asymp (R_\mu-r)^{\frac{2-\gamma_0}{\gamma_0-1}},
\end{equation}
and consequently
\begin{equation}\label{two-sided-Phi-asymptotics}
 \Phi''(\rho_\mu(r))\asymp (R_\mu-r)^{\frac{\gamma_0-2}{\gamma_0-1}}.
\end{equation}
\end{lemma}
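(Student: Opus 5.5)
The plan is to reduce the equilibrium problem to an ODE for the enthalpy and treat it as a small-$\mu$ perturbation of the Lane--Emden problem with exponent $\gamma_0$. For a static solution with $u=0$, \eqref{euler eq} reads $\nabla\Phi'(\rho)+\nabla V=0$ on the support, since $\nabla P(\rho)=\rho\nabla\Phi'(\rho)$. Hence $\Phi'(\rho)+V=\text{const}$ on $S_\mu$. Set $w=\Phi'(\rho)$. By \eqref{property of pressure 1} and \eqref{enthalpy}, $\Phi''>0$, so $\Phi'$ is a strictly increasing $C^1$ bijection from $[0,\infty)$ onto its range, and we may write $\rho=g(w):=(\Phi')^{-1}(w)$ for $w\ge0$, with $g(w)=0$ for $w\le0$. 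Applying $\Delta$ and using \eqref{eq for potential} gives the radial semilinear equation
\[
-\Delta w=4\pi g(w_+),\qquad w(0)=\Phi'(\mu),\quad w'(0)=0.
\]
From \eqref{property of pressure 2}, $\Phi''(s)\sim C_{\gamma_0}s^{\gamma_0-2}$ as $s\to0^+$. Therefore $\Phi'(s)\sim \frac{C_{\gamma_0}}{\gamma_0-1}s^{\gamma_0-1}$ and $g(w)\sim c\,w^{1/(\gamma_0-1)}$ as $w\to0^+$. The exponent $p=1/(\gamma_0-1)$ lies in $(1,5)$ because $\gamma_0\in(6/5,2)$.

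\textbf{Existence and compact support.} Let $\alpha=\Phi'(\mu)$ and rescale $w(r)=\alpha\,\theta(\lambda r)$ with $\lambda^2=4\pi g(\alpha)/\alpha$. Then
\[
-\Delta\theta=\frac{g(\alpha\theta)}{g(\alpha)},\qquad \theta(0)=1 .
\]
As $\mu\to0$, the right-hand side converges to $\theta_+^{p}$, locally uniformly in $C^0$ and also in $C^1$ on $\{\theta\ge\delta\}$. The Lane--Emden solution $\theta_{LE}$ with $p<5$ has a first zero $\xi_1<\infty$ with $\theta_{LE}'(\xi_1)<0$. By continuous dependence for the radial ODE (written as an integral equation to handle $r=0$), the solution $\theta_\alpha$ for small $\alpha$ also reaches zero at a transversal point $\xi_\alpha$ near $\xi_1$; this determines $\mu_0$. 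Beyond $\xi_\alpha$ we set $\rho=0$ and extend $V$ harmonically by the exterior Newtonian potential. The constant $\Phi'(\rho)+V$ is fixed by requiring $V\to0$ at infinity, and the outer radius is $R_\mu=\xi_\alpha/\lambda$. Since $\theta_\alpha'(\xi_\alpha)\neq0$, the implicit function theorem gives $C^1$ dependence of $R_\mu$ on $\mu$. Joint $C^1$ dependence of $\theta_\alpha$ on $(r,\alpha)$ follows from differentiability of solutions of the ODE in parameters, which uses $g\in C^1$ on $(0,\infty)$ where $w>0$. Transferring this through $\alpha=\Phi'(\mu)$ gives $C^1$ dependence of $\rho_\mu=g(w)$ on $(r,\mu)$ in the interior. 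Uniqueness of solutions to the radial ODE then identifies these equilibria with the radial stars of the statement.

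\textbf{Boundary asymptotics.} This is the step I expect to need the most care. Transversality gives $w(r)\asymp R_\mu-r$ and $w'(r)\asymp -1$ near $R_\mu$. Then $\rho=g(w)\asymp w^{p}\asymp (R_\mu-r)^{1/(\gamma_0-1)}$. For the derivative, $\rho'=w'/\Phi''(\rho)$, and
\[
\Phi''(\rho)\asymp \rho^{\gamma_0-2}\asymp (R_\mu-r)^{\frac{\gamma_0-2}{\gamma_0-1}},
\]
which is \eqref{two-sided-Phi-asymptotics}. This yields $|\rho'|\asymp (R_\mu-r)^{\frac{2-\gamma_0}{\gamma_0-1}}$. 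The subtle points are the following:
\begin{enumerate}[(i)]
\item The nonlinearity $g$ is only Hölder at $0$ when $p$ is non-integer. The ODE near the boundary still has a $C^1$ solution because the forcing term is continuous, so transversality persists.
\item The smallness requirement on $\mu$ is used exactly to stay in the regime where the asymptotics \eqref{property of pressure 2} control the whole profile. Only near-boundary two-sided bounds are claimed, so it suffices that the constants are uniform near $R_\mu$, and that holds since $\theta_\alpha'(\xi_\alpha)$ is bounded away from $0$.
\end{enumerate}
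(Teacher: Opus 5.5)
Your proposal is correct. The paper does not prove this lemma itself but imports it from Lin--Zeng \cite{separable}, and your argument is the standard one behind that citation: reduce to the enthalpy equation $-\Delta w=4\pi g(w_+)$, rescale to the Lane--Emden problem for small central density, use the transversal first zero, and read off the boundary behavior from $\rho=g(w)$ and $\rho'=w'/\Phi''(\rho)$ near $R_\mu$.

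One small correction: since $p>1$, one has $g\in C^1(\mathbb R)$ with $g'(0)=0$. So your remark (i) is unnecessary, and the joint $C^1$ dependence on $(r,\mu)$ in fact holds across $r=R_\mu$, not only in the interior.
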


\begin{remark}
For $P(\rho)=C_\gamma\rho^\gamma$, $6/5<\gamma<2$, the equilibria are the Lane--Emden stars and exist for every $\mu>0$; see, for example, \cite{subrahmanijanchandrasekhar_1967_an,heinzle_2003_newtonian}.
\end{remark}

The radial linear stability of $(\rho_\mu,0)$ is described by the turning-point principle of Lin and Zeng.  We retain the notation $D_\mu^0$, $L_\mu$, and $i_\mu$ from \cite{separable}; the operators are recalled in Subsection~\ref{notation-subsection}.

\begin{theorem}[Turning-point principle, \cite{separable}]\label{turning pt principle}
The equilibrium $\rho_\mu$ is spectrally stable under radial perturbations if and only if
\[
 n^-(D_\mu^0)=1\qquad\text{and}\qquad i_\mu=1,
\]
where
\[
 i_\mu=
 \begin{cases}
 1,& M'(\mu)\,\dfrac{d}{d\mu}\!\left(\dfrac{M(\mu)}{R_\mu}\right)>0\ \text{or}\ M'(\mu)=0,\\[1.2ex]
 0,& M'(\mu)\,\dfrac{d}{d\mu}\!\left(\dfrac{M(\mu)}{R_\mu}\right)<0\ \text{or}\ \dfrac{d}{d\mu}\!\left(\dfrac{M(\mu)}{R_\mu}\right)=0.
 \end{cases}
\]
The quantities $M'(\mu)$ and $\frac{d}{d\mu}(M(\mu)/R_\mu)$ do not vanish simultaneously.  Defining the radial instability index by
\[
 n^u(\mu):=n^-(D_\mu^0)-i_\mu=n^-(L_\mu)-i_\mu\ge0,
\]
one has spectral stability if and only if $n^u(\mu)=0$.  The index can change only at extrema of the mass curve.  In particular, for sufficiently small $\mu$,
\[
 n^u(\mu)=
 \begin{cases}
 1,&6/5<\gamma_0<4/3,\\
 0,&4/3<\gamma_0<2.
 \end{cases}
\]
\end{theorem}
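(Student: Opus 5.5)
The plan is to reproduce the Lin--Zeng argument in the radial Lagrangian displacement setting, working with the Hamiltonian formulation of the linearized Euler--Poisson system. Writing the linearized radial flow as $\partial_t\binom{\xi}{v}=JL\binom{\xi}{v}$, with $L=\mathrm{diag}(L_\mu,A)$, the kinetic part $A$ positive and the potential part given by the second variation of the energy--Casimir functional, I would first invoke the general index theorem for separable linear Hamiltonian systems of the form $\partial_t u=\mathcal{B}\mathcal{L}u$ from \cite{separable}. That theorem gives $n^u=n^-(\mathcal{L}|_{\overline{R(\mathcal{B}')}})$, i.e. the number of unstable modes equals the negative index of the energy quadratic form restricted to the mass-preserving (Casimir-constrained) subspace. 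Since $A>0$, the count reduces to the negative index of $L_\mu$ on $\{\delta\rho:\int\delta\rho=0\}$. Equivalently, it is $n^-(D_\mu^0)$ minus the correction coming from the constraint.

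The second step identifies $n^-(L_\mu)=n^-(D_\mu^0)$ by the reduction $\delta\rho\mapsto\Phi''(\rho_\mu)\delta\rho-4\pi\Delta^{-1}\delta\rho$. Concretely, $D_\mu^0=-\Delta-4\pi/\Phi''(\rho_\mu)\chi_{S_\mu}$ acting on the potential perturbation, and both quadratic forms agree up to a positive change of variables. The constrained index then equals $n^-(L_\mu)-1$ or $n^-(L_\mu)$. Which one occurs is decided by the sign of $\langle L_\mu^{-1}1,1\rangle$, by the standard Lagrange-multiplier lemma. To compute it, I will differentiate the equilibrium equation $\Phi'(\rho_\mu)+V_\mu=\text{const}=-M(\mu)/R_\mu$ in $\mu$ (the constant equals $V_\mu(R_\mu)$). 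This gives $L_\mu\partial_\mu\rho_\mu=-\frac{d}{d\mu}(M/R_\mu)$, so $\langle L_\mu^{-1}1,1\rangle=-M'(\mu)/\frac{d}{d\mu}(M/R_\mu)$. This yields exactly the definition of $i_\mu$, including the degenerate cases. Nonsimultaneous vanishing follows because $\partial_\mu\rho_\mu\not\equiv0$ (it equals $1$ at the origin) and lies in $\ker L_\mu$ if both vanish. I would rule that out via the radial ODE: a kernel element would solve a second-order linear ODE with prescribed behavior at both the center and the boundary, contradicting the relation $V(R)=-M/R$.

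The third step handles the variation of the index and the small-$\mu$ values. Since $n^u$ is integer-valued and $L_\mu$ depends continuously on $\mu$, the index can change only when an eigenvalue of the constrained operator crosses zero. That forces $\langle L_\mu^{-1}1,1\rangle$ to degenerate, hence $M'(\mu)=0$, so changes happen only at mass extrema. For small $\mu$, the scaling of Lemma~\ref{(Existence of nonrotating star)} makes $\rho_\mu$ close to the Lane--Emden profile with exponent $\gamma_0$. There $M\sim\mu^{(3\gamma_0-4)/(2(\gamma_0-1))}$ and $n^-(D_\mu^0)=1$ (one negative direction, from the ground state). The sign of $M'(\mu)\frac{d}{d\mu}(M/R)$ is then determined by the exponents, giving $i_\mu=0$ for $\gamma_0<4/3$ and $i_\mu=1$ for $\gamma_0>4/3$.

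I expect the main obstacle to be the functional-analytic setting near the physical vacuum boundary. The weight $\Phi''(\rho_\mu)\asymp(R_\mu-r)^{(\gamma_0-2)/(\gamma_0-1)}$ is singular, so verifying the hypotheses of the abstract theorem takes real work. These hypotheses are a finite negative index, a closed range of $\mathcal{B}$, and $L$ bounded below modulo a finite-rank part. I would handle this with weighted Hardy inequalities built from \eqref{two-sided-Phi-asymptotics}, which give compactness of $4\pi\Delta^{-1}$ relative to the weighted $L^2$ form.
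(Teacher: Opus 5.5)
Your outline follows the Lin--Zeng argument that the paper imports from \cite{separable} without proof; its ingredients $L_\mu\partial_\mu\rho_\mu=-\frac{d}{d\mu}(M(\mu)/R_\mu)$ and the kernel dichotomy reappear in the proof of Lemma~\ref{spectrum of tilde mathbb L mu}. As written, however, it does not establish the first assertion. The index theorem and the constrained-index lemma give $n^u(\mu)=n^-(D_\mu^0)-i_\mu$ and ``stable iff $n^u(\mu)=0$'', i.e.\ stable iff $n^-(D_\mu^0)=i_\mu$. This still allows $n^-(D_\mu^0)=0=i_\mu$, and nothing in your plan excludes it. If $L_\mu>0$, then $\langle L_\mu^{-1}1,1\rangle>0$ and your rule returns $i_\mu=0$ consistently. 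Continuity from small $\mu$ does not help either, since $n^-(D_\mu^0)$ may jump where $\frac{d}{d\mu}(M/R_\mu)$ vanishes while $n^u$ stays fixed. You therefore need an independent lemma $n^-(D_\mu^0)\ge1$ for every $\mu$. One way: differentiating $\Phi'(\rho_\mu)+V_\mu=\mathrm{const}$ and $\Delta V_\mu=4\pi\rho_\mu$ in $x_j$ gives $D_\mu\,\partial_{x_j}V_\mu=0$. So $V_\mu'(r)>0$ is a positive zero-energy solution of the $\ell=1$ radial operator, which is therefore nonnegative (ground-state substitution $f=V_\mu'g$). The $\ell=0$ form equals the $\ell=1$ form minus $2\int_0^\infty|f|^2\,dr$, so it is negative at $f=V_\mu'$, where the $\ell=1$ form vanishes. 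Likewise, for small $\mu$, ``one negative direction, from the ground state'' gives only a lower bound. Proving $n^-(D_\mu^0)=1$ needs an upper bound, e.g.\ a Sturm zero count for the Lane--Emden zero-energy solution $\partial_\mu V_\mu-c_\mu'$, transferred to small $\mu$ through the rescaled operators. That transfer is legitimate because the limiting kernel is trivial ($M/R\propto\mu^{\gamma_0-1}$ for that family).

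Two further steps need repair.

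First, where $\frac{d}{d\mu}(M/R_\mu)=0$ the operator $L_\mu$ is not invertible, so ``the sign of $\langle L_\mu^{-1}1,1\rangle$'' is meaningless there and ``including the degenerate cases'' is unsupported. You need the kernel description: the radial kernel is at most one-dimensional by ODE uniqueness at $r=0$, and the regular solution $\partial_\mu V_\mu-c_\mu'$, which equals $-M'(\mu)/r-c_\mu'$ outside $S_\mu$ (with $c_\mu=-M(\mu)/R_\mu$), decays iff $c_\mu'=0$. You then need the variant of the multiplier lemma for a kernel vector with $\int\partial_\mu\rho_\mu=M'(\mu)\ne0$, which gives $i_\mu=0$. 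Also, the rule must read $i_\mu=1$ iff $\langle L_\mu^{-1}1,1\rangle\le0$; this is what gives $i_\mu=1$ at $M'(\mu)=0$.

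Second, for non-simultaneous vanishing, ``prescribed behavior at both the center and the boundary'' yields no contradiction. Regularity at $0$ plus the exterior matching is exactly what a genuine kernel element satisfies when only $\frac{d}{d\mu}(M/R_\mu)$ vanishes. The actual mechanism is Cauchy uniqueness at $R_\mu$. If both derivatives vanish, then $c_\mu'=0$ and $\psi:=\partial_\mu V_\mu=-M'(\mu)/r\equiv0$ for $r>R_\mu$, so $\psi(R_\mu)=\psi'(R_\mu)=0$. Since $4\pi/\Phi''(\rho_\mu)$ is bounded up to $R_\mu$ for $\gamma_0<2$, this forces $\psi\equiv0$. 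Hence $\partial_\mu\rho_\mu=(c_\mu'-\psi)/\Phi''(\rho_\mu)\equiv0$, contradicting $\partial_\mu\rho_\mu(0)=1$.

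Finally, a smaller point on the small-$\mu$ computation. In the central-density parameter the limiting Lane--Emden family has $M\propto\mu^{(3\gamma_0-4)/2}$ and $R_\mu\propto\mu^{(\gamma_0-2)/2}$, as in the proof of Theorem~\ref{Main thm 0}; your exponent is the one in terms of the central enthalpy. The sign is unaffected. However, reading off the signs of $M'$ and $\frac{d}{d\mu}(M/R_\mu)$ for small $\mu$ requires $C^1$-in-$\mu$ closeness to that family, not just asymptotics of $M$ and $R_\mu$.
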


\subsection{Context and main results}
For Lane--Emden stars, the classical threshold $\gamma=4/3$ separates the stable and unstable small-density regimes.  Radial linear stability and instability were established by Lin \cite{lin_1997_stability}.  Jang proved fully nonlinear instability for $\gamma=6/5$ in \cite{jang_2008_nonlinear} and for $6/5<\gamma<4/3$ in \cite{Jang_2013_nonlinear}. More recently, Cheng, Cheng, and Lin constructed expanding solutions starting arbitrarily close to unstable Lane--Emden stars for $6/5<\gamma<4/3$ \cite{Expanding_solution}.  In contrast, we are not aware of a rigorous construction of collapsing
solutions from initial data arbitrarily close to an unstable Lane--Emden star in the range $6/5<\gamma<4/3$.  Existing rigorous constructions of gravitational collapse occur in rather different regimes.  In particular, Guo, Had\v{z}i\'c, Jang, and Schrecker constructed exactly self-similar collapsing solutions for $1<\gamma<4/3$ \cite{Collapse_solution}; these solutions are not perturbations of Lane--Emden equilibria and, unlike the compactly supported Lane--Emden stars, have infinite total mass.  For general equations of state, the turning-point theory \cite{separable} shows that unstable branches can also occur outside the classical Lane--Emden picture and can carry more than one unstable radial mode.  On the stable branch, nonlinear radial stability under general pressure assumptions was obtained in \cite{lin_2024_nonlinear}.  The natural complementary question is whether the unstable index $n^u(\mu)>0$ forces nonlinear escape for general pressure laws.

The present paper gives two conditional nonlinear instability mechanisms.  Throughout the statements, ``conditional'' means that a sufficiently regular radial solution is assumed to exist on the logarithmic time interval appearing in the conclusion.  This distinction is necessary because the available three-dimensional physical-vacuum well-posedness theorems with self-gravity are stated for polytropic pressure laws; see the discussion following Theorem~\ref{Main result when energy is higher}.

\subsubsection*{Lower-energy instability}
Assume
\begin{equation}\label{main-index-assumptions-intro}
 n^u(\mu)>0,\qquad M'(\mu)\ne0.
\end{equation}
The \emph{displacement Hessian} is the second variation of the Hamiltonian with respect to the Lagrangian map $X$ at the equilibrium $(\mathrm{id},0)$.  Equivalently, it is the quadratic form governing the leading change of the potential energy under infinitesimal mass-preserving radial displacements.  The condition $M'(\mu)\ne0$ makes this Hessian nondegenerate: it excludes the zero mode that occurs at a mass turning point.  Such turning points are precisely the possible transition points at which the number of unstable modes can change in the turning-point theory of Lin and Zeng \cite{separable}.  The condition $n^u(\mu)>0$ then guarantees that the displacement Hessian has a negative direction on the dynamically accessible radial class.

Our first result, Theorem~\ref{Main thm}, shows that any sufficiently small radial solution with
\[
 H(X(0),U(0))<H(\mathrm{id},0)
\]
must leave a neighborhood of the equilibrium within a time bounded by
\[
 \frac{1+o(1)}{\sqrt{\mu_1}}
 \log\!\left(\frac{C\delta}{|H(X(0),U(0))|}\right),
\]
where $-\mu_1<0$ is the negative eigenvalue closest to the origin.  Thus the least unstable linear rate controls the longest possible escape time.  The argument does not require the initial perturbation to have a prescribed unstable spectral component. It uses conservation of the Hamiltonian together with a Lyapunov functional built from all negative eigendirections of the displacement Hessian, that is, the radial directions along which the quadratic part of the potential energy decreases. To describe the mechanism more precisely, we use the Lagrangian
variables introduced in Section~2.  Let \(X(t)\) be the Lagrangian
flow map and let \(U(t):=\rho_\mu X_t(t)\) be its conjugate momentum,
so that the equilibrium star corresponds to
\((X,U)=(\mathrm{id},0)\).  If \(w_i\) are the negative eigenfunctions
of the displacement Hessian, define the displacement and momentum
coordinates in these directions by
\[
    a_i(t)=[X(t)-\mathrm{id},w_i],
    \qquad
    b_i(t)=\langle U(t),w_i\rangle_{L^2}.
\]
The Lyapunov functional
\[
    V(t)=\sum_i a_i(t)b_i(t)
\]
then measures the coupling between displacement and momentum along
all hyperbolic directions. In the one-mode picture, $V$ is the product of the displacement and velocity coordinates; its sign distinguishes data already moving toward a growing branch from data initially dominated by the decaying branch.  For the subclass of data satisfying \(V(0)\ge 0\), the theorem gives
an explicit exponential lower bound for
\(\|X(t)-\mathrm{id}\|_{Y_\mu}^2\) starting at \(t=0\).
When \(V(0)<0\), the solution may first undergo a stable-dominated
transient.  The proof bounds the time needed either to leave the
neighborhood or to enter the growing regime, but it does not identify
a distinguished transition time from which an exponential lower bound
of the form above can be stated.  Accordingly, no such explicit lower
bound is asserted in this case.

\subsubsection*{Instability from an unstable spectral component}
Theorem~\ref{Main result when energy is higher} treats data of either sign of the Hamiltonian.  Let $P_u$ be the Riesz projection onto the finite-dimensional unstable subspace of the linearized Hamiltonian flow.  If
\[
 \|P_uZ(0)\|\ge \kappa\|Z(0)\|,
 \qquad Z=(X-\mathrm{id},U),
\]
for a fixed cone aperture $\kappa\in(0,1]$, then the solution exits a small neighborhood on a logarithmic time scale.  The admissible size of the full perturbation is allowed to depend on $\kappa$ and decreases as the initial data approach the center-stable subspace.  The proof uses the exponential trichotomy of the Hamiltonian group and an adapted norm on its unstable, center, and stable subspaces.  An invariant-cone estimate shows that the unstable component grows at an effective rate converging to $\sqrt{\mu_1}$ as the perturbation radius tends to zero.

\subsubsection*{From conditional to fully nonlinear instability}
For $P(\rho)=C_\gamma\rho^\gamma$, $1<\gamma<3$, Gu and Lei \cite{GU2016662} proved local well-posedness of the three-dimensional Euler--Poisson physical-vacuum problem in a high-order weighted Sobolev class.  For the radial ball geometry, one may instead use the local theory of Luo, Xin, and Zeng \cite{Xinzhouping}.  These solution classes control the $W^{1,\infty}$ deformation norm required by the conditional arguments.  A stopping-time continuation argument therefore converts the conditional estimates into fully nonlinear radial instability in the polytropic case; see Theorem~\ref{Main thm 0}.  For a general pressure law satisfying only \eqref{property of pressure 1}--\eqref{property of pressure 2}, the leading physical-vacuum degeneracy is still determined by the sound speed $P'(\rho)$, and an extension of the polytropic local theory is plausible under substantially stronger smoothness, nondegeneracy, and compatibility assumptions.  Such an extension requires a separate construction and is not used here; we have not found a published three-dimensional Euler--Poisson theorem in precisely the generality needed for the present application.

\subsubsection*{Comparison with Jang}
Jang's theorem \cite{Jang_2013_nonlinear} constructs, for every sufficiently small amplitude $\delta$, a Lane--Emden perturbation of size $O(\delta)$ in a high-order weighted physical-vacuum energy space, seeded along the fastest growing linear mode.  Jang proves that at the sharp logarithmic time
\[
 T^\delta=\frac1{\sqrt{\mu_0}}\log\frac{2\theta_0}{\delta},
\]
the solution has reached a fixed size in the weaker zeroth-order instantaneous energy $E^0$.  Jang's theorem is therefore a \emph{strong-to-weak} nonlinear instability result: the high-order weighted energy controls the solution and closes the nonlinear bootstrap, while escape is detected in a lower-order weighted norm.  Since this weaker norm is continuously controlled by the high-order norm, escape in the weaker topology automatically implies escape in the stronger one; in this precise topological sense, Jang's conclusion is stronger than a strong-to-strong instability statement.

The two results nevertheless address different aspects of instability.  Jang's theorem has the sharp leading-mode time scale, and gives the stronger strong-to-weak escape conclusion, but it is proved for a specially prepared family of data aligned with the fastest growing mode.  The present turning-point formulation applies conditionally to unstable branches for general equations of state; its lower-energy criterion does not require initial alignment with a growing eigenfunction, and its unstable-projection theorem allows arbitrary combinations of unstable modes and initial energies of either sign.  Our unconditional polytropic conclusion in Theorem~\ref{Main thm 0} is strong-to-strong in the chosen local well-posedness topology and is therefore weaker topologically than Jang's conclusion, while applying to broader classes of perturbations.  The methods are correspondingly different: Jang uses a hierarchy of weighted high-order energies and a Duhamel comparison with the fastest growing mode, whereas our lower-energy theorem exploits the separable Hamiltonian structure and a finite-dimensional Lyapunov functional.

\subsubsection*{Key analytical ingredients}
The proof combines four ingredients.  First, the Lagrangian formulation builds the conservation of mass into the choice of variables and identifies $(\mathrm{id},0)$ as a critical point of a separable Hamiltonian.  Its second variation with respect to the Lagrangian displacement is the self-adjoint operator $\widetilde{\mathbb L}_\mu$ on the weighted displacement space.  This identifies the negative spectral directions of the potential energy with the hyperbolic directions of the linearized Hamiltonian flow.

Second, the turning-point index and the nondegeneracy condition $M'(\mu)\ne0$ imply that the negative subspace is finite dimensional, that zero is separated from the spectrum, and that the positive and negative parts of the absolute quadratic form control the natural weighted form norm.  These facts yield both the finite-dimensional Lyapunov functional used in the lower-energy theorem and the Riesz projections used in the unstable-cone theorem.

Third, the nonlinear force must be compared with its linearization in a topology compatible with the physical vacuum.  Lemma~\ref{Estimation of dH/dX-Lmu} gives a modulus-of-continuity remainder estimate in a small $W^{1,\infty}$ neighborhood.  Radial symmetry is essential here: the internal force admits an exact Piola-stress formula, and Newton's shell theorem reduces the gravitational energy to a one-dimensional mass-shell integral.  These formulas avoid the derivative loss and singular convolution estimates that arise in a direct three-dimensional expansion.

Finally, the two nonlinear mechanisms use different aspects of the Hamiltonian structure.  For lower-energy data, conservation of the Hamiltonian controls the positive spectral part by the negative part and forces a finite-dimensional Lyapunov functional to grow.  For data with a prescribed unstable component, an exponential trichotomy and an invariant-cone argument control the stable and center components while preserving the asymptotically sharp growth rate.  In the polytropic class, a centered high-order distance associated with the radial physical-vacuum local theory supplies the continuation topology needed to iterate the local solution up to the logarithmic escape time.

\subsection{Organization of the paper}
Section~\ref{Lagrangian Formulation and Hamiltonian Structure} derives the Lagrangian Hamiltonian formulation and computes its first two variations.  Section~\ref{section of criterion} isolates the abstract lower-energy mechanism.  The main conditional and polytropic results are stated in Section~4.  Section~5 contains the weighted Hardy, potential, and nonlinear-remainder estimates.  Section~6 proves the spectral properties and the two instability theorems; the unstable-projection argument is based on the exponential trichotomy.  Section~7 proves Lemma~\ref{Estimation of dH/dX-Lmu}.

\subsection{Notation}\label{notation-subsection}
We collect the weighted spaces and operators used below.  For a positive
weight $q$, $L_q^2(S_\mu)$ denotes the space with norm
$\|f\|_{L_q^2}^2=\int_{S_\mu}q|f|^2\,dx$.  Set
\begin{align*}
 X_\mu&:=L^2_{\Phi''(\rho_\mu)}(S_\mu),
 &Y_\mu&:=\bigl(L^2_{\rho_\mu}(S_\mu)\bigr)^3,\\
 Z_\mu&:=\left\{X\in Y_\mu:
       \operatorname{div}(\rho_\mu X)\in X_\mu\right\},
 &\vec Z_\mu&:=Z_\mu\times Y_\mu^*.
\end{align*}
The corresponding inner products are
\begin{align*}
 \langle\sigma_1,\sigma_2\rangle_{X_\mu}
 &=\int_{S_\mu}\Phi''(\rho_\mu)\sigma_1\sigma_2\,dx,\\
 [X_1,X_2]:=\langle X_1,X_2\rangle_{Y_\mu}
 &=\int_{S_\mu}\rho_\mu X_1\cdot X_2\,dx,\\
 \langle X_1,X_2\rangle_{Z_\mu}
 &=[X_1,X_2]
 +\int_{S_\mu}\Phi''(\rho_\mu)
   \operatorname{div}(\rho_\mu X_1)
   \operatorname{div}(\rho_\mu X_2)\,dx.
\end{align*}
The radial subspaces are
\begin{align*}
 X_{\mu,r}&:=\{\sigma\in X_\mu:\sigma(x)=\sigma(|x|)\},\\
 Y_{\mu,r}&:=\left\{X\in Y_\mu:
     X(x)=v(|x|)\frac{x}{|x|}\ \text{for some }v\right\},\\
 Z_{\mu,r}&:=\left\{X\in Y_{\mu,r}:
     \operatorname{div}(\rho_\mu X)\in X_{\mu,r}\right\},
 \qquad
 \vec Z_{\mu,r}:=Z_{\mu,r}\times Y_{\mu,r}^*.
\end{align*}
All perturbations in this paper are radial.  After restricting the spaces and
operators below to their radial subspaces, we omit the subscript $r$ unless it
is needed for clarity.

The basic operators are
\begin{align*}
 L_\mu&:=\Phi''(\rho_\mu)-4\pi(-\Delta)^{-1}:X_\mu\to X_\mu^*,\\
 I_{X_\mu}&:=\frac1{\Phi''(\rho_\mu)}:X_\mu^*\to X_\mu,\\
 \mathbb L_\mu&:=I_{X_\mu}L_\mu
 =I-\frac{4\pi}{\Phi''(\rho_\mu)}(-\Delta)^{-1},\\
 D_\mu&:=-\Delta-\frac{4\pi}{\Phi''(\rho_\mu)},
 \qquad D_\mu^0:=D_\mu\big|_{\mathrm{radial}},\\
 A_\mu&:=\rho_\mu:Y_\mu\to Y_\mu^*,\\
 B_\mu&:=-\operatorname{div}:Y_\mu^*\supset D(B_\mu)\to X_\mu,
 \qquad B_\mu':=\nabla:X_\mu^*\supset D(B_\mu')\to Y_\mu,\\
 \widetilde L_\mu&:=A_\mu B_\mu' L_\mu B_\mu A_\mu:Y_\mu\to Y_\mu^*,\\
 \widetilde{\mathbb L}_\mu&:=A_\mu^{-1}\widetilde L_\mu
 =B_\mu'L_\mu B_\mu A_\mu:Y_\mu\to Y_\mu.
\end{align*}
For $\sigma\in X_\mu$, the Newtonian inverse is understood after extension by
zero outside $S_\mu$:
\begin{equation}\label{laplacian inverse}
 (-\Delta)^{-1}\sigma(x)
 =\frac1{4\pi}\int_{S_\mu}\frac{\sigma(y)}{|x-y|}\,dy.
\end{equation}

If $f:X\to X^*$ is a self-dual operator on a Hilbert space $X$, its negative
index is
\[
 n^-(f):=\max\left\{\dim S:S\subset D(f),\ 
          \langle fu,u\rangle<0\ \text{for every }0\ne u\in S\right\}.
\]
When the Riesz representative of $f$ has a complete orthogonal eigenbasis,
$n^-(f)$ is the number of its negative eigenvalues, counted with
multiplicity.  We use $D$ and $\nabla$ for spatial derivatives and use
$\langle\cdot,\cdot\rangle$ without a subscript for the natural duality
pairing when no ambiguity can arise.

\section{Lagrangian Formulation and Hamiltonian Structure}\label{Lagrangian Formulation and Hamiltonian Structure}
The Lagrangian formulation and Hamiltonian structure are briefly discussed in the appendix of \cite{separable}. To make the present paper self-contained, we provide the derivation needed below. Fix a nonrotating star $(\rho_\mu(|x|),0)$ supported in the ball $S_\mu:=B(0,R_\mu)$, where $\mu=\rho_\mu(0)$ is the central density. We take $S_\mu$ as the reference domain and define the configuration space of Lagrangian maps by
\[
\Lambda:=\{\text{diffeomorphism }X: S_\mu \to X(S_\mu)\subseteq \mathbb{R}^3\}.
\]
Here, each Lagrangian map satisfies the following ODE.
\begin{equation}\label{Lagrangian flow}
\begin{aligned}
    \frac{d}{dt}X(x,t)&=u(X(x,t),t),\\
    X(x,0)&=x.
\end{aligned}
\end{equation}
For any reference density $\rho_*:S_\mu \to \mathbb{R}_{\geq 0}$ and a path of Lagrangian maps $X(t)\in \Lambda$, we define the action functional $\mathcal{A}$ by
\[
\mathcal{A}=\int  \left( \int_{S_\mu} \frac{1}{2} \rho_*|X_t|^2dx-\int_{X(S_\mu,t)} \Phi (\rho) dx+\frac{1}{8\pi} \int_{\mathbb{R}^3}|\nabla V|^2dx \right)dt,
\]
where the physical density $\rho$ in the Eulerian coordinate is given by 
\begin{equation}\label{def of density in Lag}
    \rho(x,t)=\left( \frac{\rho_*}{\det DX(x,t)}\right)\circ X(x,t)^{-1}: X(S_\mu,t)\to \mathbb{R}_{\geq0},
\end{equation}
which can be extended as 0 outside $X(S_\mu,t)\subseteq \mathbb{R}^3$. This definition ensures conservation of mass. The gravitational potential $V(x,t)$ is then defined via \eqref{eq for potential}. The internal-energy potential $\Phi$ satisfies \eqref{enthalpy}.
\begin{lemma}
 $X(t)$ is a critical path of $\mathcal{A}$ if and only if $(\rho,u=X_t\circ X^{-1})$ solves the Euler--Poisson system \eqref{continuity eq}--\eqref{eq for potential}.
\end{lemma}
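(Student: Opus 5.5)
I will compute the first variation of $\mathcal A$ directly, pulling every Eulerian integral back to the reference domain $S_\mu$ so that the variation acts only on $X$. Mass conservation then comes for free: \eqref{def of density in Lag} says $\rho(X(x,t),t)\det DX=\rho_*(x)$. Differentiating this identity in $t$ and using $\partial_t\det DX=\det DX\,(\operatorname{div}u)\circ X$ with $u=X_t\circ X^{-1}$ gives $(\rho_t+\nabla\cdot(\rho u))\circ X=0$. Hence \eqref{continuity eq} holds for every path in $\Lambda$, critical or not. It remains to show that criticality is equivalent to \eqref{euler eq}.

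\textbf{The three terms of $\mathcal A$.} Take a variation $X^\varepsilon=X+\varepsilon Y$, where $Y=\xi\circ X$ has compact support in time. I treat the terms one at a time.

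The kinetic term gives $\int\!\int_{S_\mu}\rho_*X_t\cdot Y_t$. After integrating by parts in $t$ this becomes $-\int\!\int\rho_*X_{tt}\cdot Y$. Since $X_{tt}=(D_tu)\circ X$, changing variables turns it into $-\int\!\int_{X(S_\mu)}\rho\,D_tu\cdot\xi\,dx\,dt$.

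For the internal energy, write $\int_{X(S_\mu)}\Phi(\rho)\,dx=\int_{S_\mu}\Phi(\rho_*/J)\,J\,dx$ with $J=\det DX$. Using $\delta J=J\,(\operatorname{div}\xi)\circ X$, its variation is
\[
\int_{S_\mu}\bigl(\Phi(\rho)-\rho\Phi'(\rho)\bigr)\circ X\,J\,(\operatorname{div}\xi)\circ X\,dx=-\int_{X(S_\mu)}P(\rho)\operatorname{div}\xi\,dx .
\]
Here I used $s\Phi'(s)-\Phi(s)=P(s)$, which follows from \eqref{enthalpy} and $P(0)=0$. Integrating by parts, the contribution to $\delta\mathcal A$ is $-\int\nabla P(\rho)\cdot\xi$. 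The boundary term vanishes because $P(\rho)\to0$ at the vacuum boundary. In the physical-vacuum setting I will justify this by taking $\rho_*$ with $\rho_*\to0$ at $\partial S_\mu$ and using \eqref{property of pressure 2}.

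The gravitational term needs most care. Since $-\Delta V=-4\pi\rho$ with decay at infinity, $\frac1{8\pi}\int|\nabla V|^2=\frac12\int\rho V$, and this equals $-\frac12\iint\frac{\rho(x)\rho(y)}{|x-y|}\,dx\,dy$. Pulled back, it is $-\frac12\iint\frac{\rho_*(x)\rho_*(y)}{|X(x)-X(y)|}\,dx\,dy$, which depends on $X$ only through the kernel. Differentiating the kernel gives $\int_{S_\mu}\rho_*\nabla V(X)\cdot Y\,dx=\int\rho\nabla V\cdot\xi$. This must be checked against the sign convention of $\mathcal A$, which contains $+\frac1{8\pi}\int|\nabla V|^2$. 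With $\Delta V=4\pi\rho$ that quantity is the negative of the gravitational energy. So $L=K-(\text{internal energy})-(\text{gravitational energy})$, and the variation contributes $-\int\rho\nabla V\cdot\xi$.

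\textbf{Assembling and the converse.} Collecting the three pieces,
\[
\delta\mathcal A=-\int\!\!\int_{X(S_\mu)}\bigl(\rho D_tu+\nabla P(\rho)+\rho\nabla V\bigr)\cdot\xi\,dx\,dt .
\]
This vanishes for all admissible $\xi$ if and only if \eqref{euler eq} holds on $X(S_\mu,t)$, by the fundamental lemma of the calculus of variations applied where $\rho>0$. For the converse, suppose $(\rho,u)$ solves the system. Then the flow map from \eqref{Lagrangian flow} makes $\rho$ coincide with \eqref{def of density in Lag} (with $\rho_*=\rho(\cdot,0)$), by uniqueness for the transport equation along characteristics. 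The same computation then gives $\delta\mathcal A=0$.

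\textbf{Main obstacle.} I expect the hardest step to be the rigorous treatment of the gravitational variation and of the boundary term. For the gravitational part I will first try differentiating under the integral in the double-integral form: the kernel $|X(x)-X(y)|^{-1}$ and its $\varepsilon$-derivative are bounded by $C|x-y|^{-1}$ and $C|x-y|^{-2}$ for a bi-Lipschitz $X$, and both are locally integrable in $\mathbb R^3$. For the boundary term, I will use that $P(\rho)\to0$ at the vacuum boundary, with enough regularity of the solution assumed.
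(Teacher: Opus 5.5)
Your proposal is correct and follows the paper's proof essentially step for step: the continuity equation comes from differentiating the push-forward relation with Jacobi's formula, and varying each term yields $\delta\mathcal A=-\iint(\rho D_tu+\nabla P(\rho)+\rho\nabla V)\cdot\xi\,dx\,dt$. The only real difference is that you vary the gravitational term through the pulled-back pair kernel $|X(x)-X(y)|^{-1}$, whereas the paper uses $\dot\rho=-\nabla\cdot(\rho w)$ together with self-adjointness of the Newtonian potential.

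One line needs fixing. With $\Delta V=4\pi\rho$ one has $\frac1{8\pi}\int|\nabla V|^2=-\frac12\int\rho V=+\frac12\iint\rho(x)\rho(y)|x-y|^{-1}\,dx\,dy\ge0$. Your first identity for this term therefore has the wrong sign: the quantity you wrote is the gravitational energy, not $\frac1{8\pi}\int|\nabla V|^2$. The sign you finally use, a contribution $-\int\rho\nabla V\cdot\xi$ to $\delta\mathcal A$, is nevertheless correct.
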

\begin{proof}
The push-forward formula \eqref{def of density in Lag} gives
\[
 \rho\circ X=\frac{\rho_*}{J},\qquad J=\det DX.
\]
Differentiating in time and using Jacobi's formula yields
\[
 (\rho_t+u\cdot\nabla\rho+\rho\nabla\cdot u)\circ X=0,
 \qquad u=X_t\circ X^{-1},
\]
so the continuity equation holds identically.

Let $\widetilde X$ be a variation compactly supported in time and put
$w=\widetilde X\circ X^{-1}$.  After integration by parts in time, the
variation of the kinetic term is
\[
 -\iint_{X(S_\mu,t)}\rho D_tu\cdot w\,dx\,dt.
\]
The corresponding Eulerian density variation is
\begin{equation}\label{partial epsilon of rho}
 \dot\rho=-\nabla\cdot(\rho w).
\end{equation}
Since $P(\rho)=\rho\Phi'(\rho)-\Phi(\rho)$, integration by parts gives
\[
 \left.\frac d{d\epsilon}\right|_{\epsilon=0}
 \int_{X_\epsilon(S_\mu,t)}\Phi(\rho_\epsilon)\,dx
 =\int_{X(S_\mu,t)}\nabla P(\rho)\cdot w\,dx.
\]
Finally, using $\Delta V=4\pi\rho$, self-adjointness of the Newtonian
potential, and \eqref{partial epsilon of rho},
\[
 \left.\frac d{d\epsilon}\right|_{\epsilon=0}
 \frac1{8\pi}\int_{\mathbb R^3}|\nabla V_\epsilon|^2\,dx
 =-\int_{X(S_\mu,t)}\rho\nabla V\cdot w\,dx.
\]
Consequently,
\[
 \delta\mathcal A(X)[\widetilde X]
 =-\iint_{X(S_\mu,t)}
 \bigl(\rho D_tu+\nabla P(\rho)+\rho\nabla V\bigr)\cdot w\,dx\,dt.
\]
Because $\widetilde X$, and hence $w$, is arbitrary, critical paths are
exactly the solutions of the Euler--Poisson momentum equation.  Together
with the continuity equation and Poisson equation, this proves the lemma.
\end{proof}

We now choose the reference density to be the equilibrium, $\rho_*=\rho_\mu$,
and introduce the momentum variable
\[
 U:=\rho_\mu X_t\in Y_\mu^*
 =\left(L^2_{1/\rho_\mu}(S_\mu)\right)^3.
\]
The Hamiltonian is
\begin{equation}\label{Hamiltonian-definition}
 H(X,U)=\frac12\int_{S_\mu}\frac{|U|^2}{\rho_\mu}\,dx
 +\int_{X(S_\mu)}\Phi(\rho)\,dx
 -\frac1{8\pi}\int_{\mathbb R^3}|\nabla V|^2\,dx+c,
\end{equation}
where $c$ is chosen so that $H(\mathrm{id},0)=0$.  In the primal--dual
variables $(X,U)\in Y_\mu\times Y_\mu^*$, the canonical Hamiltonian operator
maps $(p,q)\in Y_\mu^*\times Y_\mu$ to $(q,-p)$.  Thus the Hamilton equations
are
\begin{equation}\label{Hamiltonian structure of E-P}
 \binom{X_t}{U_t}
 =\binom{\delta H/\delta U}{-\delta H/\delta X}
 =\binom{\rho_\mu^{-1}U}
 {-\rho_\mu\nabla\bigl(\Phi'(\rho)+V\bigr)\circ X}.
\end{equation}
The equilibrium $(\mathrm{id},0)$ is a critical point.  More explicitly,
\begin{equation}\label{first variation against X of H}
 \left\langle\frac{\delta H}{\delta X}(X),\widetilde X\right\rangle
 =\int_{S_\mu}
 \bigl(\nabla[\Phi'(\rho)+V]\circ X\bigr)
 \cdot(\rho_\mu\widetilde X)\,dx.
\end{equation}

The stability properties of the Hamiltonian system are closely related to the second variation of the Hamiltonian functional at the equilibrium  $(\mathrm{id},0)$, in particular its second variation with respect to the Lagrangian displacement.  Rein \cite{rein_2003_nonlinear} proved conditional nonlinear stability when the initial density is an energy minimizer, and Chandrasekhar's variational principle likewise links stability to constrained energy minimization; see \cite{aly_1992_on,Antonov_1987,Lebovitz}.  In the Lagrangian formulation the mass constraint is built into the push-forward relation \eqref{def of density in Lag}, and the kinetic second variation is nonnegative.  It therefore remains to analyze the second variation of the Hamiltonian with respect to the Lagrangian displacement.  We refer to this quadratic form as the \emph{displacement Hessian}; it measures the leading change of the potential energy under an infinitesimal mass-preserving deformation of the reference star.  The separable structure of the Hamiltonian motivates the lower-energy mechanism in Section~\ref{section of criterion}.
We next calculate the second variation of $H$ with respect to $X$ at the steady state $(\mathrm{id},0)$.
\begin{lemma}\label{second variation of H}
    We define the  potential part of $H$ by
    \[
    F_1(X)=\int_{X(S_\mu)}\Phi(\rho)dx-\frac{1}{8\pi}\int_{\mathbb{R}^3}|\nabla V|^2dx+c.
    \]
Then, for $X(\varepsilon)=\mathrm{id}+\varepsilon\widetilde X$, we have
\begin{align*}
 \left.\partial_\varepsilon^2F_1(X)\right|_{\varepsilon=0}
 &=\left\langle
 \left.\frac{\delta^2H}{\delta X^2}\right|_{\mathrm{id}}
 \widetilde X,\widetilde X\right\rangle\\
 &=\int_{S_\mu}\Phi''(\rho_\mu)\sigma^2\,dx
 -\frac1{4\pi}\int_{\mathbb R^3}
 \left|\nabla\bigl(|x|^{-1}*\sigma\bigr)\right|^2\,dx,
\end{align*}
where
$\sigma=\left.\partial_\varepsilon\rho\right|_{\varepsilon=0}
=-\nabla\cdot(\rho_\mu\widetilde X)$.
\end{lemma}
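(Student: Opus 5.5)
The plan is to compute the second derivative of each of the two terms of $F_1$ along $X(\varepsilon)=\mathrm{id}+\varepsilon\widetilde X$ separately. The guiding idea is to exploit the fact that the Eulerian density $\rho_\varepsilon$ depends on $\varepsilon$ only through the push-forward relation \eqref{def of density in Lag}. We also use the first-order identity \eqref{partial epsilon of rho}, now with $w=\widetilde X\circ X_\varepsilon^{-1}$.

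\textbf{Internal energy term.} Rather than differentiating on the moving domain $X_\varepsilon(S_\mu)$, we change variables back to the reference domain:
\[
 \int_{X_\varepsilon(S_\mu)}\Phi(\rho_\varepsilon)\,dx=\int_{S_\mu}\Phi\!\left(\frac{\rho_\mu}{J_\varepsilon}\right)J_\varepsilon\,dx,\qquad J_\varepsilon=\det(I+\varepsilon D\widetilde X).
\]
This integrand is a smooth function of $\varepsilon$ pointwise, with $J_\varepsilon=1+\varepsilon\,\mathrm{div}\widetilde X+\varepsilon^2 j_2+O(\varepsilon^3)$. Setting $g(s,J)=\Phi(s/J)J$ and differentiating twice at $\varepsilon=0$ gives
\[
 \Phi''(\rho_\mu)\rho_\mu^2(\mathrm{div}\widetilde X)^2+\bigl(\rho_\mu\Phi'(\rho_\mu)-\Phi(\rho_\mu)\bigr)\cdot 2j_2\cdot(-1),
\]
up to sign bookkeeping, i.e. a term $P(\rho_\mu)\,\partial_\varepsilon^2J$.

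The cleaner route is to write everything in Eulerian form. The first variation is $\int\nabla P(\rho_\varepsilon)\cdot w$, as in the Lemma above. Differentiating once more, the $\partial_\varepsilon w$ contribution vanishes at $\varepsilon=0$ because it is paired with the equilibrium force $\nabla P(\rho_\mu)+\rho_\mu\nabla V_\mu=0$ once combined with the gravitational term. The remaining contribution is $\int \nabla(P'(\rho_\mu)\dot\rho/\rho_\mu\cdot\rho_\mu)\cdot\widetilde X$ plus transport terms. Using $\nabla\Phi'(\rho)=\Phi''(\rho)\nabla\rho$ and integrating by parts against $\rho_\mu\widetilde X$, this yields $\int\Phi''(\rho_\mu)\sigma^2$ with $\sigma=-\nabla\cdot(\rho_\mu\widetilde X)$.

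Concretely, I would differentiate the first-variation formula \eqref{first variation against X of H}, which in reference variables reads $\int_{S_\mu}\nabla[\Phi'(\rho_\varepsilon)+V_\varepsilon]\circ X_\varepsilon\cdot\rho_\mu\widetilde X\,dx$. The derivative in $\varepsilon$ at $0$ has three parts:
\begin{enumerate}
\item $(D^2[\Phi'(\rho_\mu)+V_\mu]\widetilde X)\cdot\rho_\mu\widetilde X$, which vanishes since $\Phi'(\rho_\mu)+V_\mu$ is constant on $S_\mu$ (the Euler--Lagrange equation of the star);
\item $\nabla(\Phi''(\rho_\mu)\sigma)\cdot\rho_\mu\widetilde X$, which integrates by parts to $\int\Phi''(\rho_\mu)\sigma^2$;
\item $\nabla\dot V\cdot\rho_\mu\widetilde X$, where $\Delta\dot V=4\pi\sigma$, so $\dot V=-|x|^{-1}*\sigma$. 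This integrates to $\int\dot V\sigma=-\int(|x|^{-1}*\sigma)\sigma=-\frac1{4\pi}\int|\nabla(|x|^{-1}*\sigma)|^2$.
\end{enumerate}
The identification of $\partial_\varepsilon^2F_1$ with $\langle\delta^2H/\delta X^2\,\widetilde X,\widetilde X\rangle$ holds since $\varepsilon\mapsto X_\varepsilon$ is affine, so there is no first-variation term.

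\textbf{Main obstacle.} The hardest step is justifying the boundary terms in the integrations by parts and the differentiation of $\Phi'(\rho_\varepsilon)\circ X_\varepsilon$ near the vacuum boundary, where $\Phi''(\rho_\mu)$ blows up like $(R_\mu-r)^{(\gamma_0-2)/(\gamma_0-1)}$ by \eqref{two-sided-Phi-asymptotics}. I would first do the computation for smooth $\widetilde X$. The boundary term $\Phi''(\rho_\mu)\sigma\,\rho_\mu\widetilde X\cdot n$ vanishes because $\rho_\mu\Phi''(\rho_\mu)=P'(\rho_\mu)/1\cdot\rho_\mu^{0}\to0$, i.e. $P'(\rho_\mu)\to0$, by \eqref{two-sided-vacuum-asymptotics}. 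Likewise $\rho_\mu\nabla\dot V\cdot\widetilde X$ vanishes on $\partial S_\mu$. The identity then extends by density to $\widetilde X\in Z_\mu$, where both sides are continuous.
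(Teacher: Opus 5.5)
Your proposal is correct. Its core, the three-part differentiation of the force, takes a different route from the paper's.

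\textbf{The paper's route.} The paper works at the level of energies. It uses the second-order transport identity $\partial_\varepsilon^2\rho_\varepsilon|_{\varepsilon=0}=\operatorname{div}\operatorname{div}(\rho_\mu\widetilde X\otimes\widetilde X)$ and differentiates $\int\Phi(\rho_\varepsilon)$ and $-\frac1{8\pi}\int|\nabla V_\varepsilon|^2$ twice. It then discards the two terms containing $\partial_\varepsilon^2\rho_\varepsilon$, because together they equal the constant $\Phi'(\rho_\mu)+V_\mu$ times $\int\partial_\varepsilon^2\rho_\varepsilon\,dx=0$, by conservation of mass.

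\textbf{Your route.} You differentiate $\langle\frac{\delta H}{\delta X}(X_\varepsilon),\widetilde X\rangle$ once more. The same equilibrium fact then enters as $D^2(\Phi'(\rho_\mu)+V_\mu)=0$ in the chain-rule term. The two discarded terms agree up to two integrations by parts, but you never need $\partial_\varepsilon^2\rho_\varepsilon$ or mass conservation.

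\textbf{What your route buys.} You get an operator-level statement. Before pairing with $\widetilde X$, it gives $\partial_\varepsilon\frac{\delta H}{\delta X}(X_\varepsilon)|_{\varepsilon=0}=\rho_\mu\nabla\bigl(\Phi''(\rho_\mu)\sigma-|x|^{-1}*\sigma\bigr)=\widetilde L_\mu\widetilde X$. This identifies $\widetilde L_\mu$ directly as the linearized force, which is exactly how it is used in Lemma~\ref{Estimation of dH/dX-Lmu} and in the generator $\mathcal A$.

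\textbf{What the paper's route buys.} It is economical in regularity: it only needs $\Phi\in C^2$, which is all that the standing hypothesis $P\in C^1$ provides. Your step (2) writes $\nabla(\Phi''(\rho_\mu)\sigma)$ and differentiates $\nabla\Phi'(\rho_\varepsilon)$ in $\varepsilon$, which formally uses $\Phi'''$. Under the paper's hypotheses you should move the gradient off $\Phi'(\rho_\varepsilon)$ before differentiating in $\varepsilon$. Do this in reference variables, where $\nabla[\Phi'(\rho_\varepsilon)]\circ X_\varepsilon=(DX_\varepsilon)^{-T}\nabla\Phi'(\rho_\mu/J_\varepsilon)$ lives on the fixed domain $S_\mu$. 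This also replaces the pointwise Eulerian chain rule, which is literally valid only at interior points because of the moving domain $X_\varepsilon(S_\mu)$.

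\textbf{Presentation.} The exploratory paragraphs on the internal energy play no role and should be removed. These are the Lagrangian expansion ``up to sign bookkeeping'' and the remark about a $\partial_\varepsilon w$ contribution. The three-part computation, together with your boundary-term discussion and the density argument, is self-contained.
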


\begin{proof}
It suffices first to consider a smooth radial displacement $\widetilde X$;
the general statement follows by density in the form domain.  Let
$X_\varepsilon=\mathrm{id}+\varepsilon\widetilde X$, let
$\rho_\varepsilon$ be the corresponding push-forward density, and set
\[
 \sigma:=\left.\partial_\varepsilon\rho_\varepsilon\right|_{\varepsilon=0}
 =-\operatorname{div}(\rho_\mu\widetilde X).
\]
The standard transport identities for an affine Lagrangian variation give
\begin{equation}\label{second-density-transport-identity}
 \left.\partial_\varepsilon^2\rho_\varepsilon\right|_{\varepsilon=0}
 =\operatorname{div}\operatorname{div}
   (\rho_\mu\widetilde X\otimes\widetilde X).
\end{equation}
Applying these identities to the internal energy yields
\begin{align}
 \left.\frac{d^2}{d\varepsilon^2}\right|_{\varepsilon=0}
 \int_{X_\varepsilon(S_\mu)}\Phi(\rho_\varepsilon)\,dx
 &=\int_{S_\mu}\Phi''(\rho_\mu)\sigma^2\,dx\notag\\
 &\quad+
 \int_{S_\mu}\Phi'(\rho_\mu)
 \operatorname{div}\operatorname{div}
 (\rho_\mu\widetilde X\otimes\widetilde X)\,dx.
 \label{second-internal-variation-concise}
\end{align}
For the gravitational energy, differentiating Poisson's equation twice and
integrating by parts gives
\begin{align}
 \left.\frac{d^2}{d\varepsilon^2}\right|_{\varepsilon=0}
 \left[-\frac1{8\pi}\int_{\mathbb R^3}|\nabla V_\varepsilon|^2\,dx\right]
 &=\int_{S_\mu}V_\mu
 \operatorname{div}\operatorname{div}
 (\rho_\mu\widetilde X\otimes\widetilde X)\,dx\notag\\
 &\quad-\frac1{4\pi}\int_{\mathbb R^3}
 \left|\nabla(|x|^{-1}*\sigma)\right|^2\,dx.
 \label{second-gravity-variation-concise}
\end{align}
The equilibrium equation implies
$\nabla(\Phi'(\rho_\mu)+V_\mu)=0$ in $S_\mu$.  Hence
$\Phi'(\rho_\mu)+V_\mu$ is constant there.  The sum of the transport
terms in \eqref{second-internal-variation-concise} and
\eqref{second-gravity-variation-concise} is therefore this constant times
$\int_{\mathbb R^3}\partial_\varepsilon^2\rho_\varepsilon|_0\,dx$,
which vanishes by conservation of mass.  The remaining terms are exactly the
asserted quadratic form.
\end{proof}
\begin{remark}\label{rmk for second variation of H}
Polarization gives
\[
\left\langle \frac{\delta^2H}{\delta X^2}\Big\vert_{\mathrm{id}}\tilde{X}_1,\tilde{X}_2
\right\rangle=\int_{S_\mu} \Phi^{\prime\prime} (\rho_\mu) \sigma_1\sigma_2 dx -\frac{1}{4\pi} \int_{\mathbb{R}^3}  \nabla (|x|^{-1}*\sigma_1) \cdot \nabla (|x|^{-1}*\sigma_2) dx,
\]
where $\sigma_i=-\nabla\cdot (\rho_\mu \tilde{X}_i)$, $i=1,2$.
\end{remark}

The operators introduced in Subsection~\ref{notation-subsection} satisfy
\[
 \left\langle \frac{\delta^2H}{\delta X^2}\Big|_{\mathrm{id}}X,X\right\rangle
 =\langle L_\mu B_\mu A_\mu X,B_\mu A_\mu X\rangle
 =\langle\widetilde L_\mu X,X\rangle,
 \qquad X\in D(B_\mu A_\mu).
\]
Thus the displacement Hessian is represented by $\widetilde L_\mu$, or by
its self-adjoint Riesz representative $\widetilde{\mathbb L}_\mu$ on
$Y_\mu$.  From this point onward all spaces and operators are restricted to
the radial subspaces described in Subsection~\ref{notation-subsection}.

\section{A lower-energy mechanism for separable Hamiltonian systems}\label{section of criterion}
This section isolates the mechanism behind Theorem~\ref{Main thm}.  For simplicity, we formulate it at the level of a second-order Hamiltonian equation, with all analytic estimates included as hypotheses. This methodology can be generalized to separable Hamiltonian systems.  In Sections~5--6 the hypotheses are verified directly for the Euler--Poisson problem.

Let $\mathcal V\hookrightarrow\mathcal H$ be real Hilbert spaces, with dense continuous embedding, and let $\mathbb L$ be self-adjoint on $\mathcal H$ with form domain $\mathcal V$.  Assume that $0\notin\sigma(\mathbb L)$ and that the negative spectral subspace is finite dimensional.  We write
\[
 \mathcal H=\mathcal H_-\oplus\mathcal H_+,
 \qquad
 \mathbb Lw_i=-\mu_iw_i,
 \quad 0<\mu_1\le\cdots\le\mu_n,
\]
where $\{w_i\}_{i=1}^n$ is orthonormal, and assume that the positive form satisfies
\[
 [\mathbb Lv,v]\ge \lambda_1\|v\|_{\mathcal H}^2,
 \qquad v\in\mathcal H_+,
\]
for some $\lambda_1>0$.  We assume, in addition, that the form norm is equivalent to the absolute spectral form:
\begin{equation}\label{abstract-form-norm-equivalence}
 c_{\mathcal V}\|u\|_{\mathcal V}^2
 \le \sum_{i=1}^n\mu_i|[u,w_i]|^2+[\mathbb L P_+u,P_+u]
 \le C_{\mathcal V}\|u\|_{\mathcal V}^2.
\end{equation}

Consider a strong solution of
\begin{equation}\label{abstract-second-order-equation}
 u_{tt}+\mathbb Lu=\mathcal N(u)
\end{equation}
that conserves an energy of the form
\begin{equation}\label{abstract-energy-expansion}
 \mathcal E(u,u_t)=\frac12\|u_t\|_{\mathcal H}^2+\frac12[\mathbb Lu,u]+\mathcal R_E(u).
\end{equation}
For $u_-:=P_-u=\sum_i a_iw_i$, set
\[
 \mathcal R_G(u):=\sum_{i=1}^n a_i[\mathcal N(u),w_i].
\]
We assume that there is a modulus $\omega(r)\to0$ as $r\to0+$ such that, whenever $\|u\|_{\mathcal V}\le r$,
\begin{equation}\label{abstract-remainder-hypothesis}
 |\mathcal R_E(u)|+|\mathcal R_G(u)|
 \le \omega(r)\|u\|_{\mathcal V}^2.
\end{equation}

\begin{proposition}[Lower-energy escape]\label{abstract-lower-energy-proposition}
Under the preceding assumptions, there exists $r_*>0$ such that, for every
$0<r_0\le r_*$, there are constants $C_*>0$,
$0<\alpha=\alpha(r_0)<1/2$, and $c_0=c_0(r_0)>0$ with the following
property.  Let
\[
 E_0:=\mathcal E(u(0),u_t(0))<0,
 \qquad
 a_0:=2\sqrt{(1-\alpha)\mu_1}.
\]
A strong solution of \eqref{abstract-second-order-equation}
with $\|u(0)\|_{\mathcal V}<r_0$ cannot remain in the ball
$\|u(t)\|_{\mathcal V}<r_0$ for all $t\ge0$.
Writing
\[
 T_{\rm esc}:=\inf\{t\ge0:\|u(t)\|_{\mathcal V}\ge r_0\},
\]
more quantitatively, if the solution remains in that ball on $[0,T]$, then
\begin{equation}\label{abstract-G-absolute-growth}
 G'(t)\ge a_0|G(t)|+c_0|E_0|,
 \qquad
 G(t):=[P_-u(t),u_t(t)].
\end{equation}
Consequently the exit time satisfies
\begin{equation}\label{abstract-escape-time}
 T_{\rm esc}
 \le \frac{2}{a_0}
 \log\!\left(1+\frac{a_0C_*r_0^2}{c_0|E_0|}\right).
\end{equation}
If $G(0)\ge0$, the factor $2$ on the right-hand side of
\eqref{abstract-escape-time} may be omitted, and, as long as the solution
remains in the ball,
\begin{align}
 G(t)&\ge e^{a_0t}G(0)
 +\frac{c_0|E_0|}{a_0}\bigl(e^{a_0t}-1\bigr),
 \label{abstract-G-explicit-lower}\\
 \|P_-u(t)\|_{\mathcal H}^2
 &\ge \|P_-u(0)\|_{\mathcal H}^2
 +\frac{2G(0)}{a_0}\bigl(e^{a_0t}-1\bigr)
 +\frac{2c_0|E_0|}{a_0^2}
 \bigl(e^{a_0t}-1-a_0t\bigr).
 \label{abstract-negative-mode-lower}
\end{align}
Moreover, $\alpha(r_0)\to0$ as $r_0\to0+$.
\end{proposition}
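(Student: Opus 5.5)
The plan is to differentiate $G(t)=[P_-u,u_t]=\sum_i a_ib_i$, where $b_i:=[u_t,w_i]$, and use conservation of $\mathcal E$ to trade the uncontrolled positive-mode contributions for $-E_0=|E_0|$. Since $P_-$ commutes with $\mathbb L$ and $\mathbb L w_i=-\mu_iw_i$, equation \eqref{abstract-second-order-equation} gives
\[
 G'=\|P_-u_t\|_{\mathcal H}^2+\sum_i\mu_ia_i^2+\mathcal R_G(u).
\]
Writing $[\mathbb Lu,u]=-\sum_i\mu_ia_i^2+[\mathbb LP_+u,P_+u]$ in \eqref{abstract-energy-expansion} and using $\mathcal E\equiv E_0$ yields
\[
 \sum_i\mu_ia_i^2=\|u_t\|^2+[\mathbb LP_+u,P_+u]+2\mathcal R_E(u)-2E_0 .
\]
For a parameter $\alpha\in(0,1/2)$ I split $\sum\mu_ia_i^2=(1-\alpha)\sum\mu_ia_i^2+\alpha\sum\mu_ia_i^2$ and substitute the energy identity only into the $\alpha$-part:
\[
 G'=\|P_-u_t\|^2+(1-\alpha)\sum_i\mu_ia_i^2+\alpha\bigl(\|u_t\|^2+[\mathbb LP_+u,P_+u]\bigr)+2\alpha|E_0|+2\alpha\mathcal R_E+\mathcal R_G .
\]
The first two terms dominate $a_0|G|$. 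Indeed, Cauchy--Schwarz and AM--GM give
\[
 a_0|G|\le 2\sqrt{(1-\alpha)\mu_1}\,\|P_-u\|\,\|P_-u_t\|\le(1-\alpha)\mu_1\|P_-u\|^2+\|P_-u_t\|^2,
\]
and $(1-\alpha)\mu_1\|P_-u\|^2\le(1-\alpha)\sum_i\mu_ia_i^2$.

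The main obstacle is absorbing the remainders using only the $\alpha$-weighted positive terms, whose coefficient must tend to $0$ as $r_0\to0$. By \eqref{abstract-remainder-hypothesis}, $|2\alpha\mathcal R_E+\mathcal R_G|\le 2\omega(r_0)\|u\|_{\mathcal V}^2$ in the ball. I bound $\|u\|_{\mathcal V}^2$ by \eqref{abstract-form-norm-equivalence} together with the energy identity:
\[
 c_{\mathcal V}\|u\|_{\mathcal V}^2\le \|u_t\|^2+2[\mathbb LP_+u,P_+u]+2\omega(r_0)\|u\|_{\mathcal V}^2+2|E_0| .
\]
Choosing $r_*$ so that $4\omega(r_*)\le c_{\mathcal V}$ then gives
\[
 \|u\|_{\mathcal V}^2\le C'\bigl(\|u_t\|^2+[\mathbb LP_+u,P_+u]+|E_0|\bigr).
\]
Now take $\alpha(r_0):=4C'\omega(r_0)+r_0$, shrinking $r_*$ so that $\alpha<1/2$. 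Then the remainders are at most $(\alpha/2)(\|u_t\|^2+[\mathbb LP_+u,P_+u])+(\alpha/2)|E_0|$. This proves \eqref{abstract-G-absolute-growth} with $c_0=\alpha$, and $\alpha\to0$ as $r_0\to0$.

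For the time bounds, I first show $|G|\le C_*r_0^2$ in the ball. Since $E_0<0$, energy conservation gives $\|u_t\|^2\le-[\mathbb Lu,u]+2|\mathcal R_E|\le Cr_0^2$, and $|G|\le\|u\|_{\mathcal H}\|u_t\|$. If $G(0)\ge0$, comparison for $G'\ge a_0G+c_0|E_0|$ yields \eqref{abstract-G-explicit-lower}. Combining this with $G\le C_*r_0^2$ gives \eqref{abstract-escape-time} without the factor $2$. If $G(0)<0$, then while $G<0$ we have $\frac{d}{dt}|G|\le-a_0|G|-c_0|E_0|$. Hence $G$ reaches $0$ within time $a_0^{-1}\log\bigl(1+a_0|G(0)|/(c_0|E_0|)\bigr)$, and $|G(0)|\le C_*r_0^2$. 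After that the first case applies, so the total is at most twice that logarithm. This also shows the solution cannot stay in the ball for all time. Finally, $\frac{d}{dt}\|P_-u\|^2=2G$; integrating \eqref{abstract-G-explicit-lower} once in time gives \eqref{abstract-negative-mode-lower}.
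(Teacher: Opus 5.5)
Your proof is correct, and its overall structure matches the paper's: the same functional $G$, the same $\alpha$-splitting with Cauchy--Schwarz producing $a_0|G|$, the same comparison ODE, and the same two-stage argument when $G(0)<0$. The genuine difference is in how you bound the $\alpha$-part. Write $Q_-=\sum_i\mu_ia_i^2$ and $Q_+=[\mathbb LP_+u,P_+u]$. The paper uses $E_0<0$ only as an inequality, $Q_+\le Q_-+2\omega(r_0)\|u\|_{\mathcal V}^2$. Combined with \eqref{abstract-form-norm-equivalence}, this gives the coercivity $\|u\|_{\mathcal V}^2\le CQ_-$. The paper then absorbs $\mathcal R_G$ into $\alpha Q_-$ to get $G_2\ge c\alpha\|u\|_{\mathcal V}^2$, and only afterwards converts this into $c_0|E_0|$ through the separate bound $|E_0|\le C\|u(t)\|_{\mathcal V}^2$. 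You instead substitute the exact energy identity into $\alpha Q_-$, so the forcing $2\alpha|E_0|$ appears explicitly. You then absorb both remainders into $\alpha(\|u_t\|_{\mathcal H}^2+Q_+)$ and part of $2\alpha|E_0|$, using $\|u\|_{\mathcal V}^2\le C'(\|u_t\|_{\mathcal H}^2+Q_++|E_0|)$. Your version is slightly more economical. It gives the explicit constant $c_0=\alpha$, and it never needs the statement that the negative modes dominate the $\mathcal V$-norm. The extra $+r_0$ in your $\alpha$ also keeps $\alpha$ and $c_0$ strictly positive even if $\omega(r_0)=0$. The paper's literal choice $\alpha=4C\omega(r_0)$ does not cover that degenerate case, although enlarging $\omega$ fixes it trivially. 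In exchange, the paper's route isolates the structural fact $\|u\|_{\mathcal V}^2\lesssim Q_-$ on the negative-energy set, which is the form reused in the Euler--Poisson proof; see \eqref{negative-part-controls-Z} and \eqref{V-master-inequality}. Your bound delivers forcing of that size as well, since $\frac{\alpha}{2}(\|u_t\|_{\mathcal H}^2+Q_+)+\frac{3\alpha}{2}|E_0|\ge\frac{\alpha}{2C'}\|u\|_{\mathcal V}^2$.
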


\begin{proof}
Let
\[
 G(u,u_t):=[P_-u,u_t]=\sum_{i=1}^na_i[u_t,w_i].
\]
Differentiating and using \eqref{abstract-second-order-equation} gives
\begin{equation}\label{abstract-G-derivative}
 G'=\sum_{i=1}^n[u_t,w_i]^2+\sum_{i=1}^n\mu_i a_i^2+\mathcal R_G(u).
\end{equation}
For $0<\alpha<1$, split the first two terms into
\[
 G_1:=\sum_i[u_t,w_i]^2+(1-\alpha)\sum_i\mu_i a_i^2,
 \qquad
 G_2:=\alpha\sum_i\mu_i a_i^2+\mathcal R_G(u).
\]
Cauchy's inequality yields
\begin{equation}\label{abstract-G1-bound}
 G_1\ge 2\sqrt{(1-\alpha)\mu_1}\,|G|=a_0|G|.
\end{equation}
Since the energy is negative and the kinetic energy is nonnegative,
\eqref{abstract-energy-expansion} implies
\[
 [\mathbb Lu,u]+2\mathcal R_E(u)<0.
\]
Set
\[
 Q_-:=\sum_i\mu_i a_i^2,
 \qquad Q_+:=[\mathbb Lu_+,u_+]\ge0.
\]
The preceding inequality and \eqref{abstract-remainder-hypothesis} give
\[
 Q_+\le Q_-+2\omega(r_0)\|u\|_{\mathcal V}^2.
\]
Together with \eqref{abstract-form-norm-equivalence}, this implies, after
shrinking $r_0$, that
\[
 \|u\|_{\mathcal V}^2\le C Q_-.
\]
Replace $\omega$ by its nondecreasing envelope and choose
\[
 \alpha=4C\omega(r_0),
\]
with $C$ larger than the constants in the last estimate and in
\eqref{abstract-remainder-hypothesis}.  Shrinking $r_0$ once more ensures
$0<\alpha<1/2$ and
\[
 |\mathcal R_G(u)|\le \frac{\alpha}{2}Q_-.
\]
Consequently,
\begin{equation}\label{abstract-G2-coercive}
 G_2=\alpha Q_-+\mathcal R_G(u)
 \ge \frac{\alpha}{2}Q_-
 \ge c\alpha\|u\|_{\mathcal V}^2.
\end{equation}
The conserved energy also gives
\[
 |E_0|\le C\|u(t)\|_{\mathcal V}^2,
 \qquad
 \|u_t(t)\|_{\mathcal H}^2\le C\|u(t)\|_{\mathcal V}^2,
\]
as long as $\|u(t)\|_{\mathcal V}<r_0$.  Hence, after decreasing $c_0$ if
necessary, \eqref{abstract-G2-coercive} implies
$G_2\ge c_0|E_0|$.  Combining this with
\eqref{abstract-G1-bound} proves \eqref{abstract-G-absolute-growth}.

The same estimates and the continuous embedding
$\mathcal V\hookrightarrow\mathcal H$ yield the bootstrap bound
\begin{equation}\label{abstract-G-upper}
 |G(t)|\le \|P_-u(t)\|_{\mathcal H}\|u_t(t)\|_{\mathcal H}
 \le C_*r_0^2.
\end{equation}
If $G(0)\ge0$, comparison with the solution of
$y'=a_0y+c_0|E_0|$ gives \eqref{abstract-G-explicit-lower}.  Since
\[
 \frac{d}{dt}\|P_-u(t)\|_{\mathcal H}^2=2G(t),
\]
integrating \eqref{abstract-G-explicit-lower} gives
\eqref{abstract-negative-mode-lower}.  Comparing
\eqref{abstract-G-explicit-lower} with \eqref{abstract-G-upper} shows that
the solution must exit no later than
\[
 \frac1{a_0}\log\!\left(
 1+\frac{a_0C_*r_0^2}{c_0|E_0|}\right).
\]

If $G(0)<0$, then while $G<0$, \eqref{abstract-G-absolute-growth} reads
$G'+a_0G\ge c_0|E_0|$.  Together with
$G(0)\ge-C_*r_0^2$, this shows that either the solution exits or $G$ reaches
zero within the same logarithmic time.  Starting from the first time at
which $G=0$, the preceding $G\ge0$ argument requires at most one additional
interval of the same length.  This proves \eqref{abstract-escape-time} and
the escape conclusion.
\end{proof}

\begin{remark}
The finite-dimensional phase portrait is a hyperbolic saddle, not a saddle-node bifurcation.  Initial data with $G(0)<0$ may first move toward the equilibrium along a stable direction; the lower-energy condition prevents the orbit from remaining on the center-stable side and eventually forces $G$ to increase.  Estimate \eqref{abstract-escape-time} bounds this waiting stage and the subsequent growing stage in terms of the conserved negative energy.  Theorem~\ref{Main thm} implements the same mechanism with the precise weighted norms of the Euler--Poisson problem.
\end{remark}

\begin{remark}[Phase-plane interpretation]
On a single negative eigendirection, the linearized equation has the form
\[
 a''-\mu a=0,
 \qquad b:=a',
\]
with growing and decaying lines $b=\sqrt\mu\,a$ and
$b=-\sqrt\mu\,a$, respectively.  The reduced Lyapunov functional is
$G=ab$.  Thus $G(0)\ge0$ corresponds to data whose hyperbolic motion is
already oriented toward a growing branch, whereas $G(0)<0$ allows an
initially decaying, or stable-dominated, stage.  The lower-energy condition
forces $G$ to increase; hence the orbit either leaves the small neighborhood
before reaching $G=0$, or crosses to $G\ge0$ and subsequently grows
exponentially.  The following diagram is only schematic; the center
variables and nonlinear coupling are suppressed.

\begin{center}
\tikzset{every picture/.style={line width=0.75pt}} %set default line width to 0.75pt        

\begin{tikzpicture}[x=0.75pt,y=0.75pt,yscale=-1,xscale=1]
%uncomment if require: \path (0,235); %set diagram left start at 0, and has height of 235

%Straight Lines [id:da7965172321696647] 
\draw    (307.86,137.63) -- (545.29,137.08) ;
\draw [shift={(547.29,137.07)}, rotate = 179.87] [color={rgb, 255:red, 0; green, 0; blue, 0 }  ][line width=0.75]    (10.93,-3.29) .. controls (6.95,-1.4) and (3.31,-0.3) .. (0,0) .. controls (3.31,0.3) and (6.95,1.4) .. (10.93,3.29)   ;
%Straight Lines [id:da3296282784668546] 
\draw    (420.52,230.78) -- (421.43,37.03) ;
\draw [shift={(421.44,35.03)}, rotate = 90.27] [color={rgb, 255:red, 0; green, 0; blue, 0 }  ][line width=0.75]    (10.93,-3.29) .. controls (6.95,-1.4) and (3.31,-0.3) .. (0,0) .. controls (3.31,0.3) and (6.95,1.4) .. (10.93,3.29)   ;
%Straight Lines [id:da5827747508823141] 
\draw [color={rgb, 255:red, 74; green, 144; blue, 226 }  ,draw opacity=1 ]   (348.13,65.52) -- (508.3,221.86) ;
%Straight Lines [id:da3687131043775915] 
\draw [color={rgb, 255:red, 189; green, 16; blue, 224 }  ,draw opacity=1 ]   (385.46,210.87) -- (464.17,47.83) ;
%Straight Lines [id:da44485451384253394] 
\draw [color={rgb, 255:red, 74; green, 144; blue, 226 }  ,draw opacity=1 ]   (388.49,104.81) -- (388.89,105.2) ;
\draw [shift={(390.32,106.6)}, rotate = 224.31] [color={rgb, 255:red, 74; green, 144; blue, 226 }  ,draw opacity=1 ][line width=0.75]    (10.93,-3.29) .. controls (6.95,-1.4) and (3.31,-0.3) .. (0,0) .. controls (3.31,0.3) and (6.95,1.4) .. (10.93,3.29)   ;
%Straight Lines [id:da41215331169668223] 
\draw [color={rgb, 255:red, 74; green, 144; blue, 226 }  ,draw opacity=1 ]   (453.86,168.78) -- (453.02,167.9) ;
\draw [shift={(451.64,166.45)}, rotate = 46.34] [color={rgb, 255:red, 74; green, 144; blue, 226 }  ,draw opacity=1 ][line width=0.75]    (10.93,-3.29) .. controls (6.95,-1.4) and (3.31,-0.3) .. (0,0) .. controls (3.31,0.3) and (6.95,1.4) .. (10.93,3.29)   ;
%Straight Lines [id:da9076056216126517] 
\draw [color={rgb, 255:red, 189; green, 16; blue, 224 }  ,draw opacity=1 ]   (449.57,78.14) ;
\draw [shift={(450.43,76.43)}, rotate = 116.57] [color={rgb, 255:red, 189; green, 16; blue, 224 }  ,draw opacity=1 ][line width=0.75]    (10.93,-3.29) .. controls (6.95,-1.4) and (3.31,-0.3) .. (0,0) .. controls (3.31,0.3) and (6.95,1.4) .. (10.93,3.29)   ;
%Straight Lines [id:da9878540010859495] 
\draw [color={rgb, 255:red, 189; green, 16; blue, 224 }  ,draw opacity=1 ]   (397.29,186.71) ;
\draw [shift={(396.61,188.22)}, rotate = 294.15] [color={rgb, 255:red, 189; green, 16; blue, 224 }  ,draw opacity=1 ][line width=0.75]    (10.93,-3.29) .. controls (6.95,-1.4) and (3.31,-0.3) .. (0,0) .. controls (3.31,0.3) and (6.95,1.4) .. (10.93,3.29)   ;
%Curve Lines [id:da9460481782610759] 
\draw [color={rgb, 255:red, 208; green, 2; blue, 27 }  ,draw opacity=1 ]   (491.46,164.67) .. controls (494.17,166.44) and (433.65,151.57) .. (454.64,92.76) ;
\draw [shift={(455.3,90.96)}, rotate = 110.78] [color={rgb, 255:red, 208; green, 2; blue, 27 }  ,draw opacity=1 ][line width=0.75]    (10.93,-3.29) .. controls (6.95,-1.4) and (3.31,-0.3) .. (0,0) .. controls (3.31,0.3) and (6.95,1.4) .. (10.93,3.29)   ;
%Straight Lines [id:da7779572208223102] 
\draw [color={rgb, 255:red, 208; green, 2; blue, 27 }  ,draw opacity=1 ]   (427.85,99) -- (454.23,57.37) ;
\draw [shift={(455.3,55.68)}, rotate = 122.36] [color={rgb, 255:red, 208; green, 2; blue, 27 }  ,draw opacity=1 ][line width=0.75]    (10.93,-3.29) .. controls (6.95,-1.4) and (3.31,-0.3) .. (0,0) .. controls (3.31,0.3) and (6.95,1.4) .. (10.93,3.29)   ;

\draw (550, 125) node [anchor=north west][inner sep=0.75pt]    {$a$};
\draw (425, 25) node [anchor=north west][inner sep=0.75pt]    {$b=a'$};

\end{tikzpicture}
\end{center}
\end{remark}

\begin{remark}
For a separable first-order Hamiltonian system
\[
 \binom{u_t}{v_t}=\begin{pmatrix}0&S\\-S'&0\end{pmatrix}
 \binom{H_2'(u)}{Tv},
\]
differentiating $u_t=STv$ and using $v_t=-S'H_2'(u)$ formally gives
\[
 u_{tt}+STS'H_2'(u)=0.
\]
If $STS'$ is positive and invertible, applying $(STS')^{-1}$ yields a
second-order equation of the form considered above, in the Hilbert structure
induced by $(STS')^{-1}$.  In applications with unbounded operators, all
domains and duality maps must be specified separately.  The Euler--Poisson
realization below is carried out directly in the weighted spaces $Y_\mu$ and
$Z_\mu$.
\end{remark}

\section{Main results}\label{main-results-section}
We first state two conditional instability theorems for the general pressure
laws satisfying \eqref{property of pressure 1}--\eqref{property of pressure 2}.
The conditional solution class used below is specified by the following
interface.

\begin{unnumbereddefinition}[Strong radial solution]
Let $I=[0,T]$.  A pair $(X,U)$ is called a \emph{strong radial solution} on
$I$ if the following properties hold.
\begin{enumerate}[(S1)]
\item With $\widetilde X:=X-\mathrm{id}$,
\[
 \widetilde X\in C(I;Z_\mu),\qquad U\in C(I;Y_\mu^*),
\]
and, for each $t\in I$, $X(t)$ is an orientation-preserving radial
bi-Lipschitz map fixing the origin.  The map
\[
 t\longmapsto \mathfrak d(X(t))
 :=\|X(t)-\mathrm{id}\|_{W^{1,\infty}(S_\mu)}
\]
is continuous.
\item For every radial $w\in Z_\mu$, the scalar functions
\[
 t\longmapsto [\widetilde X(t),w]_{Y_\mu},
 \qquad
 t\longmapsto \langle U(t),w\rangle
\]
are locally absolutely continuous and satisfy, for almost every $t\in I$,
\begin{equation}\label{weak-Hamilton-equations-interface}
 \frac d{dt}[\widetilde X(t),w]_{Y_\mu}
 =\langle U(t),w\rangle,
 \qquad
 \frac d{dt}\langle U(t),w\rangle
 =-\left\langle\frac{\delta H}{\delta X}(X(t)),w\right\rangle.
\end{equation}
Here the first bracket is the $Y_\mu$ inner product and the other brackets
are the natural primal--dual pairings.  Equivalently,
$X_t=\rho_\mu^{-1}U$ and the momentum equation hold in the duality sense
used below.
\item The Hamiltonian is finite and conserved:
\[
 H(X(t),U(t))=H(X(0),U(0)),\qquad t\in I.
\]
\item Whenever $\mathfrak d(X(t))\le\delta_*$, with $\delta_*$ as in
Lemma~\ref{Estimation of dH/dX-Lmu}, the radial deformation factors
\[
 a=\partial_r\chi,\qquad b=\chi/r,\qquad J=ab^2,
 \qquad X(x)=\chi(|x|)\frac{x}{|x|},
\]
lie in a fixed compact subset of $(0,\infty)$, and the variational identities
in Sections~2 and~7 hold.  For sufficiently small $\delta_*$, the compactness
assertion follows automatically from (S1).
\end{enumerate}
\end{unnumbereddefinition}

Theorems~\ref{Main thm} and~\ref{Main result when energy is higher} use
only the properties in the preceding definition; hence any present or future
local theory producing this interface may be used in the conditional results.
In particular, the deformation norm
\begin{equation}\label{mixed-deformation-norm}
 \mathfrak d(X):=\|X-\mathrm{id}\|_{W^{1,\infty}(S_\mu)}
\end{equation}
is finite and continuous on the interval under consideration.

\begin{theorem}[Conditional lower-energy instability]\label{Main thm}
Assume
\[
 n^u(\mu)=n^-(L_\mu)-i_\mu>0,
 \qquad M'(\mu)\ne0.
\]
There are $\delta_0>0$, $D>0$, and a function
$h:(0,\delta_0]\to(0,\infty)$ satisfying
\begin{equation}\label{h-delta-limit}
 h(\delta)=1+o(1)\qquad(\delta\to0+)
\end{equation}
such that the following holds for every $0<\delta\le\delta_0$.  Let
$(X(t),U(t))$ be a strong radial solution with
\[
 H_0:=H(X(0),U(0))<H(\mathrm{id},0)=0
\]
and suppose that it is defined on $[0,T_\delta]$, where
\begin{equation}\label{lower-energy-escape-time}
 T_\delta=\frac{h(\delta)}{\sqrt{\mu_1}}
 \log\!\left(\frac{D\delta}{|H_0|}\right).
\end{equation}
Here $-\mu_1<0$ is the negative eigenvalue of
$\widetilde{\mathbb L}_\mu$ closest to the origin.  If the logarithm in
\eqref{lower-energy-escape-time} is positive, then
\begin{equation}\label{conditional-lower-energy-escape}
 \sup_{0\le t\le T_\delta}\mathfrak d(X(t))\ge\delta.
\end{equation}

Let $w_1,\dots,w_n$ be an orthonormal basis of the negative eigenspace of
$\widetilde{\mathbb L}_\mu$, with
$\widetilde{\mathbb L}_\mu w_i=-\mu_iw_i$, and define
\begin{equation}\label{definition-of-V-main-theorem}
 V(t):=\sum_{i=1}^n[X(t)-\mathrm{id},w_i]\,
       \langle U(t),w_i\rangle_{L^2}.
\end{equation}
If, in addition, $V(0)\ge0$, then there is a constant $C_\delta>0$ such that
\begin{equation}\label{growth of Y_mu norm}
 \|X(t)-\mathrm{id}\|_{Y_\mu}^2
 \ge C_\delta|H_0|
 \left[
 e^{\frac{2\sqrt{\mu_1}}{h(\delta)}t}
 -1-\frac{2\sqrt{\mu_1}}{h(\delta)}t
 \right],
 \qquad 0\le t\le T_\delta^*,
\end{equation}
where
\begin{equation}\label{definition-T-star}
 T_\delta^*:=\sup\left\{t\ge0:\mathfrak d(X(t))<\delta\right\}.
\end{equation}

\end{theorem}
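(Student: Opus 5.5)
The plan is to realize the abstract mechanism of Proposition~\ref{abstract-lower-energy-proposition} for the Euler--Poisson system. We take $\mathcal H=Y_\mu$ (radial), $\mathcal V=Z_\mu$, $\mathbb L=\widetilde{\mathbb L}_\mu$, and $u=\widetilde X=X-\mathrm{id}$. The equation \eqref{weak-Hamilton-equations-interface} is used only in weak form, tested against the eigenfunctions $w_i\in Z_\mu$.

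\textbf{Spectral input.} First we verify the spectral hypotheses. Since $D(\widetilde{\mathbb L}_\mu)$ is built from $L_\mu$ via $B_\mu A_\mu$, the negative index of $\widetilde{\mathbb L}_\mu$ on radial fields should equal $n^-(L_\mu)$ restricted to mass-zero densities $\sigma=-\operatorname{div}(\rho_\mu\widetilde X)$. This equals $n^-(L_\mu)-i_\mu=n^u(\mu)>0$, by the constrained-index computation in \cite{separable}. The condition $M'(\mu)\neq0$ excludes a kernel. For the essential spectrum, we use the compactness of $(-\Delta)^{-1}$ relative to the weight $\Phi''(\rho_\mu)$, controlled via Hardy-type estimates and \eqref{two-sided-Phi-asymptotics}. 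This shows that $\widetilde{\mathbb L}_\mu$ restricted to the range of $B_\mu A_\mu$ is a compact perturbation of a positive operator. Hence we get $0\notin\sigma$, finitely many negative eigenvalues $-\mu_i$, a gap $\lambda_1>0$ on $\mathcal H_+$, and the form-norm equivalence \eqref{abstract-form-norm-equivalence} with the $Z_\mu$ norm.

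\textbf{Remainder bounds.} Next we verify \eqref{abstract-energy-expansion} and \eqref{abstract-remainder-hypothesis}, with smallness measured in $\mathfrak d(X)$ rather than $\|u\|_{\mathcal V}$. Set $H=\frac12\|U\|^2_{Y^*_\mu}+F_1(X)$. Taylor's formula gives $F_1(X)=\frac12\langle\widetilde{\mathbb L}_\mu\widetilde X,\widetilde X\rangle_{Y_\mu}+\mathcal R_E$, using $F_1(\mathrm{id})=0$, $\delta F_1(\mathrm{id})=0$ and Lemma~\ref{second variation of H}. Lemma~\ref{Estimation of dH/dX-Lmu} gives, for $\mathfrak d(X)\le\delta$,
\[
\Bigl|\Bigl\langle\tfrac{\delta H}{\delta X}(X)-\widetilde L_\mu\widetilde X,\,w\Bigr\rangle\Bigr|\le\omega(\delta)\|\widetilde X\|_{Z_\mu}\|w\|_{Z_\mu}.
\]
Integrating this along $X_s=\mathrm{id}+s\widetilde X$ bounds $|\mathcal R_E|\le\omega(\delta)\|\widetilde X\|_{Z_\mu}^2$. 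Testing with $w=w_i$ and summing against $a_i$ bounds $\mathcal R_G$. The abstract proof uses only these two estimates, not a ball in $\mathcal V$, so the whole bootstrap can be run under $\mathfrak d(X(t))<\delta$. Each quantity there is controlled by $\|\widetilde X\|_{Z_\mu}^2$, and $\|\widetilde X\|_{Z_\mu}\lesssim \mathfrak d(X)$ uniformly. This last bound holds because $\operatorname{div}(\rho_\mu\widetilde X)=\rho_\mu\operatorname{div}\widetilde X+\rho_\mu'\cdot\widetilde X$, and $\Phi''(\rho_\mu)\rho_\mu'^2\asymp(R_\mu-r)^{(2-\gamma_0)/(\gamma_0-1)}$ is integrable.

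\textbf{Running the argument.} Now we follow the proof of the proposition with $G=V(t)=\sum_i a_ib_i$, using absolute continuity from (S2) and conservation from (S3). Negativity of $H_0$ gives $Q_+\le Q_-+2\omega\|\widetilde X\|^2$, hence $\|\widetilde X\|_{Z_\mu}^2\le CQ_-$ and $V'\ge a_0|V|+c_0|H_0|$, with $a_0=2\sqrt{(1-\alpha(\delta))\mu_1}$. The upper bound $|V|\le C\delta^2$ comes from $\|\widetilde X\|_{Y_\mu}\lesssim\delta$ and $\|U\|_{Y^*_\mu}^2\lesssim\delta^2$ (energy conservation). The escape time is then at most $\frac{2}{a_0}\log(1+a_0C\delta^2/(c_0|H_0|))$. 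We absorb the factor $2$ and the constants into $h(\delta)=(1-\alpha(\delta))^{-1/2}(1+o(1))$, and $D\delta$ into the argument of the logarithm. This needs care: $c_0$ may be chosen proportional to $\alpha\sim\omega(\delta)$, so the ratio $\delta^2/c_0$ must be compared with $D\delta$, and the logarithmic loss of constants is converted into $h(\delta)\to1$ only when $\log(D\delta/|H_0|)$ is large. If it is bounded, we enlarge $h$ to keep $h=1+o(1)$ by taking $D$ smaller. For the case $V(0)\ge0$, the bound \eqref{abstract-negative-mode-lower} together with $\|\widetilde X\|_{Y_\mu}^2\ge\|P_-\widetilde X\|^2$ gives \eqref{growth of Y_mu norm} on $[0,T_\delta^*]$, with $C_\delta=2c_0/a_0^2$.

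\textbf{Main obstacle.} The hardest step is the spectral identification together with the form-norm equivalence \eqref{abstract-form-norm-equivalence} for $\widetilde{\mathbb L}_\mu$ on $Z_\mu$. In particular, this requires identifying $n^-(\widetilde{\mathbb L}_\mu)$ with $n^u(\mu)$ and excluding a kernel using $M'(\mu)\neq0$. We would first attempt this through the factorization $\widetilde{\mathbb L}_\mu=B'_\mu L_\mu B_\mu A_\mu$, restricting $L_\mu$ to $\overline{R(B_\mu A_\mu)}=\{\sigma:\int\sigma=0\}$ and invoking the index counting of \cite{separable}. A secondary difficulty is the bookkeeping of $c_0(\delta)$ in the escape time.
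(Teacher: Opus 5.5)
\textbf{Overall.} Your plan matches the paper's proof. The authors also run the abstract lower-energy mechanism directly on $V=\sum_i a_ib_i$ under the bootstrap $\mathfrak d(X(t))<\delta$. They use the same splitting $V'=V_1+V_2$, the same use of $F_1(X)<0$ to get $\|\widetilde X\|_{Z_\mu}^2\le C\sum_i\mu_ia_i^2$, and the same bound $|V|\le C\delta^2$ from $\|\widetilde X\|_{Z_\mu}\lesssim\mathfrak d(X)$ and energy conservation. The spectral input likewise comes from the factorization $[\widetilde{\mathbb L}_\mu f,f]=\langle L_\mu B_\mu A_\mu f,B_\mu A_\mu f\rangle$ and the kernel and index results of Lin--Zeng.

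\textbf{The gap: bookkeeping of $c_0$.} This is the one step that does not close as you wrote it. Since $c_0\simeq\alpha(\delta)\simeq\omega(\delta)$, each of the two stages (reaching $V=0$, then growing to $|V|=C\delta^2$) only gives the bound $\frac{h}{2\sqrt{\mu_1}}\log\bigl(1+\frac{C\delta^2}{\alpha(\delta)|H_0|}\bigr)$ with $h=(1-\alpha)^{-1/2}$. Up to the harmless $1+$, this equals $\frac{h}{2\sqrt{\mu_1}}\bigl[\log(D\delta/|H_0|)+\log\bigl(C\delta/(D\alpha(\delta))\bigr)\bigr]$.

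The extra term can be absorbed, either into a $\delta$-independent $D$ or into $h$ after dividing by $\log(D\delta/|H_0|)\gtrsim\log(1/\delta)$, only if $\log(\delta/\alpha(\delta))=o(\log(1/\delta))$. For instance, if $\alpha\simeq\delta^2$, then $h$ would not tend to $1$.

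Your proposed remedy does not supply this.
\begin{enumerate}[(1)]
\item There is no ``bounded logarithm'' case to treat. Data starting inside the bootstrap satisfy $|H_0|\le C\delta^2$, so the logarithm is automatically of size at least $\log(1/\delta)$.
\item Taking $D$ smaller shrinks $T_\delta$. That makes the required inequality harder, not easier.
\end{enumerate}

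\textbf{The fix.} The missing observation is a lower bound $\alpha(\delta)\ge c\delta$. The paper gets it from the form of the modulus, $\omega(\delta)=C(\omega_P(\delta)+\delta)$; the gravitational remainder already contributes $\delta$. Alternatively, you may simply enlarge $\alpha$, since the argument only needs $\alpha\ge 4C_0C_1\omega(\delta)$ and $\alpha\to0$.

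With this bound, $\delta^2/\alpha\le C\delta$ and $1+C\delta/|H_0|\le 2C\delta/|H_0|$. Hence $h=(1-\alpha(\delta))^{-1/2}$ works with no extra factor, and all constants go into a fixed $D$. The two stages then add up to $T_\delta$. The rest of your argument, including $C_\delta=2c_0/a_0^2$ in the case $V(0)\ge0$, goes through as in the paper.
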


\begin{unnumberedremark}
The condition that the logarithm in \eqref{lower-energy-escape-time} be
positive is only a bookkeeping condition ensuring that the displayed time is
nonnegative.  If $\mathfrak d(X(0))\ge\delta$, then
\eqref{conditional-lower-energy-escape} already holds at $t=0$.  Otherwise,
the estimates used in the proof, in particular \eqref{H-controlled-by-X}
and the small-deformation estimate preceding \eqref{V-bootstrap-upper}, give
$|H_0|\le C\delta^2$.  After decreasing $\delta_0$ so that
$C\delta_0<D$, one has $D\delta/|H_0|>1$ automatically.  Thus no small
lower-energy perturbation is excluded by this condition.
\end{unnumberedremark}

\begin{unnumberedremark}[Eulerian interpretation]
While $\mathfrak d(X)<\delta$ with $\delta$ small, $X$ and $X^{-1}$ are
uniformly bi-Lipschitz, $\det DX$ stays close to one, and
\[
 \rho(t,X(t,x))=\frac{\rho_\mu(x)}{\det DX(t,x)}.
\]
Consequently, the Eulerian density and the moving support remain in a
controlled neighborhood of the equilibrium density and ball throughout the
bootstrap interval.  The conclusions of Theorems~\ref{Main thm} and
\ref{Main result when energy is higher} assert that this controlled
Lagrangian--Eulerian neighborhood cannot persist to the stated logarithmic
time.  We do not claim here a separate quantitative lower bound in a
specific Eulerian norm.
\end{unnumberedremark}

\begin{remark}
The effective amplitude growth rate in \eqref{growth of Y_mu norm} is
$\sqrt{\mu_1}/h(\delta)$, which tends to the least positive linear growth
rate $\sqrt{\mu_1}$ as $\delta\to0+$.  The exponent in
\eqref{growth of Y_mu norm} is twice this amplitude rate because the
left-hand side is a squared norm.  The theorem gives escape for every
lower-energy solution satisfying the stated existence hypothesis, but the
explicit lower bound \eqref{growth of Y_mu norm} is stated only for
$V(0)\ge0$.
\end{remark}

\begin{remark}\label{positive-V-examples}
The index assumption implies that the second variation is negative on a
nonzero dynamically accessible radial direction.  Choosing a smooth
displacement $X_0-\mathrm{id}$ sufficiently close to such a direction and
taking $U_0=0$, Corollary~\ref{Estimation of F_1} gives
\[
 H(X_0,U_0)<H(\mathrm{id},0)
\]
for all sufficiently small nonzero amplitudes.  In this elementary class
$V(0)=0$, so the explicit bound in Theorem~\ref{Main thm} applies.

One may also obtain $V(0)>0$ explicitly from the least unstable mode.  Let
$\widetilde{\mathbb L}_\mu w_1=-\mu_1w_1$ and normalize
$\|w_1\|_{Y_\mu}=1$.  For fixed $\theta>0$ and sufficiently small $M>0$,
set
\begin{equation}\label{explicit-positive-V-data}
 X_0-\mathrm{id}=M(1+\theta)w_1,
 \qquad
 U_0=M\sqrt{\mu_1}\,\rho_\mu w_1.
\end{equation}
Then
\[
 V(0)=M^2(1+\theta)\sqrt{\mu_1}>0.
\]
Moreover, Corollary~\ref{Estimation of F_1} yields
\begin{align*}
 H(X_0,U_0)
 &=\frac12M^2\mu_1
   -\frac12M^2\mu_1(1+\theta)^2+o(M^2)\\
 &=-M^2\mu_1\left(\theta+\frac{\theta^2}{2}\right)+o(M^2)<0
\end{align*}
when $M$ is small.  Thus \eqref{explicit-positive-V-data} gives lower-energy
data for which the exponential lower bound begins at $t=0$.  If the local
class requires additional smoothness, one may replace $w_1$ by a sufficiently
close smooth radial approximation; both strict inequalities persist.
\end{remark}

We next formulate the instability mechanism based on an unstable spectral component.  Let
\begin{equation}\label{linearized-generator-definition}
 \mathcal A=
 \begin{pmatrix}
 0&\rho_\mu^{-1}\\
 -\widetilde L_\mu&0
 \end{pmatrix}
\end{equation}
be the linearized Hamiltonian generator on $\vec Z_\mu=Z_\mu\times Y_\mu^*$.  Since $M'(\mu)\ne0$, zero is not in the spectrum.  The positive real eigenvalues are the finitely many numbers $\sqrt{\mu_i}$.  We define the unstable Riesz projection by
\begin{equation}\label{Riesz-projection-definition}
 P_u:=\frac1{2\pi i}\int_\Gamma(z-\mathcal A)^{-1}\,dz,
\end{equation}
where $\Gamma$ is a positively oriented contour enclosing precisely the positive real spectrum of $\mathcal A$.  The projection is independent of the choice of such a contour.

\begin{theorem}[Conditional instability from an unstable component]\label{Main result when energy is higher}
Assume $n^u(\mu)>0$ and $M'(\mu)\ne0$.  For every fixed cone aperture
$\kappa\in(0,1]$, there exist $\bar\delta_\kappa>0$ and a function
\[
 h_\kappa:(0,\bar\delta_\kappa]\longrightarrow(0,\infty),
 \qquad h_\kappa(\delta)=1+o(1)\quad(\delta\to0+),
\]
with the following property.  For every $0<\delta\le\bar\delta_\kappa$
there is a constant $D_2=D_2(\kappa,\delta)>0$ such that, if
\[
 Z(t):=(X(t)-\mathrm{id},U(t))
\]
is a strong radial solution satisfying
\begin{equation}\label{unstable-cone-initial-condition}
 0<\|Z(0)\|_{\vec Z_\mu}\le\delta,
 \qquad
 \|P_uZ(0)\|_{\vec Z_\mu}\ge\kappa\|Z(0)\|_{\vec Z_\mu},
\end{equation}
and if the solution is defined on $[0,T]$, where
\begin{equation}\label{projection-escape-time}
 T=\frac{h_\kappa(\delta)}{\sqrt{\mu_1}}
 \log\!\left(\frac{D_2(\kappa,\delta)}{\|Z(0)\|_{\vec Z_\mu}}\right),
\end{equation}
then
\begin{equation}\label{conditional-projection-escape}
 \sup_{0\le t\le T}\mathfrak d(X(t))\ge\delta.
\end{equation}
The constants are uniform over initial data satisfying
\eqref{unstable-cone-initial-condition} for the fixed values of $\kappa$ and
$\delta$.  In particular, the logarithmic-time prefactor
\[
 D_1(\kappa,\delta):=\frac{h_\kappa(\delta)}{\sqrt{\mu_1}}
\]
satisfies
\[
 \lim_{\delta\to0+}D_1(\kappa,\delta)=\frac1{\sqrt{\mu_1}}.
\]
\end{theorem}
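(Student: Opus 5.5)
The plan is to argue by contradiction: assume $\mathfrak d(X(t))<\delta$ on $[0,T]$. I will then show that an adapted norm of the unstable component grows at rate at least $\sqrt{\mu_1}/h_\kappa(\delta)$, which is incompatible with the a priori bound $\|Z(t)\|_{\vec Z_\mu}\le C\delta$ before time $T$. The whole argument is carried out in the duality form \eqref{weak-Hamilton-equations-interface}.

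\textbf{Step 1: linear structure.} Write the equation as
\[
 Z_t=\mathcal AZ+\mathcal N(Z),\qquad \mathcal N(Z)=\bigl(0,\,-\tfrac{\delta H}{\delta X}(X)+\widetilde L_\mu(X-\mathrm{id})\bigr).
\]
The nonlinear term satisfies
\[
 \|\mathcal N(Z)\|_{\vec Z_\mu^{\,\prime}}\le\omega(\delta)\|Z\|_{\vec Z_\mu}
 \quad\text{whenever }\mathfrak d(X)\le\delta,
\]
which I take from Lemma~\ref{Estimation of dH/dX-Lmu}. Testing against the finitely many smooth eigen-directions means the needed duality only involves $Z_\mu$-valued test functions.

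From the spectral facts ($0\notin\sigma$, finite negative index, and equivalence of the form norm with the absolute form), I get the trichotomy $\vec Z_\mu=E_u\oplus E_c\oplus E_s$.
\begin{itemize}
\item $E_u$ and $E_s$ are finite dimensional and spanned by $(w_i,\pm\sqrt{\mu_i}\rho_\mu w_i)$.
\item $E_c$ is the symplectic complement of $E_u\oplus E_s$. On it, the energy $\tfrac12\|U\|^2_{Y_\mu^*}+\tfrac12\langle\widetilde L_\mu X,X\rangle$ is positive definite and equivalent to $\|\cdot\|^2_{\vec Z_\mu}$, so $e^{t\mathcal A}$ is uniformly bounded there (in fact an isometry in the energy norm).
\end{itemize}
On $E_u$, choose coordinates $c_i=\langle Z,\cdot\rangle$ with $\dot c_i=\sqrt{\mu_i}c_i+\text{proj}(\mathcal N)$. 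On $E_s$, similarly $\dot s_i=-\sqrt{\mu_i}s_i+\dots$. Define adapted norms $|Z_u|^2=\sum c_i^2$, $|Z_s|^2=\sum s_i^2$, and $|Z_c|$ given by the center energy. The Riesz projection $P_u$ coincides with the projection onto $E_u$ along $E_c\oplus E_s$, so the cone hypothesis becomes $|Z_u(0)|\ge\kappa'\bigl(|Z_c(0)|+|Z_s(0)|\bigr)$ for some $\kappa'=\kappa'(\kappa)>0$.

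\textbf{Step 2: differential inequalities.} While $\mathfrak d<\delta$, and with $\varepsilon=C\omega(\delta)$,
\begin{align*}
 \tfrac{d}{dt}|Z_u|&\ge\sqrt{\mu_1}|Z_u|-\varepsilon|Z|,\\
 \tfrac{d}{dt}|Z_c|&\le\varepsilon|Z|,\\
 \tfrac{d}{dt}|Z_s|&\le-\sqrt{\mu_1}|Z_s|+\varepsilon|Z|.
\end{align*}
The center estimate comes from energy conservation for the linear center part plus a Duhamel/pairing estimate. Using that the center energy is a quadratic form, its derivative is a pairing of $Z_c$ with the projected $\mathcal N$, so only the dual bound above is needed. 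Dividing by $|Z_c|$ is legitimate in the absolutely continuous sense.

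\textbf{Step 3: invariant cone.} Consider the cone
\[
 K_\eta=\bigl\{|Z_c|+|Z_s|\le\eta^{-1}|Z_u|\bigr\},\qquad \eta=\min(\kappa',\cdot).
\]
On the boundary of $K_\eta$ the inequalities give
\[
 \tfrac{d}{dt}\bigl(\eta^{-1}|Z_u|-|Z_c|-|Z_s|\bigr)\ge\eta^{-1}\bigl(\sqrt{\mu_1}-C\varepsilon(1+\eta^{-1})\bigr)|Z_u|-C\varepsilon(1+\eta^{-1})|Z_u|>0
\]
once $\delta\le\bar\delta_\kappa$ is small enough, so $K_\eta$ is forward invariant. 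Inside the cone, $|Z|\le(1+\eta^{-1})|Z_u|$, hence
\[
 \tfrac{d}{dt}|Z_u|\ge\bigl(\sqrt{\mu_1}-C\varepsilon(1+\eta^{-1})\bigr)|Z_u|=:\frac{\sqrt{\mu_1}}{h_\kappa(\delta)}\,|Z_u|,
\]
and $h_\kappa(\delta)\to1$ because $\omega(\delta)\to0$.

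\textbf{Step 4: conclusion.} We get $|Z_u(t)|\ge c\kappa\|Z(0)\|e^{\sqrt{\mu_1}t/h_\kappa}$. On the other hand, $|Z_u|\le C\|Z(t)\|_{\vec Z_\mu}$, and I bound $\|Z(t)\|_{\vec Z_\mu}$ by $C(\delta)$ while $\mathfrak d<\delta$: the displacement part through $\mathfrak d$ controlling $Z_\mu$ for bounded $\rho_\mu$ weights, and $U$ through conservation of $H$ together with the expansion of $F_1$ (Corollary~\ref{Estimation of F_1}). This gives a contradiction for $t>T$ with $D_2=C(\delta)/(c\kappa)$.

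\textbf{Main obstacle.} The hardest step is the center estimate in Step 2. $E_c$ is infinite dimensional, and $\mathcal N$ is controlled only in a dual, form-level sense. The argument therefore has to rest on exact conservation of the center quadratic energy, without any semigroup smoothing. A second difficulty is the upper bound on $\|U\|$ from $\mathfrak d<\delta$ alone, which again requires energy conservation.
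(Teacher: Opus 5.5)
\textbf{Gap: the center estimate in Step~2.} Your unstable and stable modal inequalities, the translation of the cone hypothesis by norm equivalence, the cone bookkeeping, and the a priori bound $\|Z(t)\|_{\vec Z_\mu}\le C\delta$ all match the paper's argument. The inequality that fails is $\frac{d}{dt}|Z_c|\le\varepsilon|Z|$.

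Along the nonlinear flow the linear terms cancel, so the derivative of the center energy $\tfrac12\|U_c\|_{Y_\mu^*}^2+\tfrac12[\widetilde{\mathbb L}_\mu X_c,X_c]$ is the pairing of the force $N=\widetilde L_\mu\widetilde X-\frac{\delta H}{\delta X}(X)$ with the center \emph{velocity} $\rho_\mu^{-1}U_c$. Since $U\in Y_\mu^*$ only, this velocity lies in $Y_\mu$, not in $Z_\mu$.

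Lemma~\ref{Estimation of dH/dX-Lmu} controls $N$ only as a form on $Z_\mu\times Z_\mu$, i.e.\ in $Z_\mu^*$. Bounding $\langle N,\rho_\mu^{-1}U_c\rangle$ would require $\|N\|_{Y_\mu^*}\lesssim\varepsilon\|\widetilde X\|_{Z_\mu}$. That estimate is not available: $\widetilde L_\mu\widetilde X$ contains second derivatives of $\widetilde X$ with degenerate weights, so it costs a derivative at the vacuum boundary. In addition, $\rho_\mu^{-1}U_c$ is not an admissible test function in (S2).

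So your sentence ``only the dual bound above is needed'' is false. Testing against $Z_\mu$-directions works for the finitely many $w_i$, but not for the infinite-dimensional center. A Duhamel estimate fails for the same reason, since it needs $P_c\mathcal N(Z)\in\vec Z_\mu$. Your fallback also fails: the center quadratic energy is not conserved by the nonlinear flow; only the full $H$ is.

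\textbf{The missing idea: control the center algebraically.} Do not differentiate the center energy at all. Use conservation of the full Hamiltonian (S3) and Corollary~\ref{Estimation of F_1}. In hyperbolic coordinates $c_i^\pm=\tfrac12\bigl([\widetilde X,w_i]\pm\langle U,w_i\rangle/\sqrt{\mu_i}\bigr)$ one has $\tfrac12(b_i^2-\mu_ia_i^2)=-2\mu_ic_i^+c_i^-$, hence
\[
 H(Z(0))=\tfrac12|Z_c(t)|^2-2\sum_i\mu_i c_i^+(t)c_i^-(t)+\mathcal R_E(t),\qquad |\mathcal R_E(t)|\le\varepsilon\|\widetilde X(t)\|_{Z_\mu}^2 .
\]
Together with $|H(Z(0))|\le C\|Z(0)\|^2$, absorbing the $|Z_c|^2$ part of the remainder gives the static bound
\[
 |Z_c|^2\le C\bigl(\|Z(0)\|^2+|Z_u||Z_s|+\varepsilon(|Z_u|^2+|Z_s|^2)\bigr).
\]

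Now impose the cone only on the hyperbolic part: $|Z_s|\le A|Z_u|$ with $A\sim\kappa^{-1}$, together with $|Z_u|\ge|Z_u(0)|$. The lower bound and $|Z_u(0)|\ge c\kappa\|Z(0)\|$ convert $\|Z(0)\|^2$ into $\kappa^{-2}|Z_u|^2$, so $|Z_c|\le C\kappa^{-1}|Z_u|$ inside the cone.

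With this replacement, your Steps~3--4 go through: only the $Z_u$ and $Z_s$ Dini inequalities are used, and the effective rate is $\sqrt{\mu_1}-C(1+\kappa^{-1})\varepsilon$. This is exactly how the paper closes the argument.
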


\begin{remark}[Relation between the two instability regions]
The parameter $\kappa$ measures the relative size of the unstable component
of the initial perturbation.  Because $P_u$ is a Riesz projection and need
not be orthogonal in the original norm, $\kappa$ is best interpreted as the
aperture of an unstable cone rather than literally as the cosine of an
angle.  The admissible threshold $\bar\delta_\kappa$ may decrease as
$\kappa\to0$.

Theorem~\ref{Main result when energy is higher} and
Theorem~\ref{Main thm} describe genuinely different sets of data.  In one
hyperbolic mode, with displacement and velocity coordinates $(a,b)$, the
quadratic energy is proportional to $b^2-\mu a^2$; hence the negative-energy
region lies between the two zero-energy lines $b=\pm\sqrt\mu\,a$.  The
unstable cone is instead a neighborhood of the growing line
$b=\sqrt\mu\,a$.  It contains data of positive as well as negative energy,
whereas the negative-energy region contains data whose growing projection
may be arbitrarily small.  The following schematic illustrates the two
regions; center variables and nonlinear corrections are suppressed.

\begin{center}
\begin{tikzpicture}[scale=0.92,>=Latex]
  \begin{scope}
    \clip (0,0) circle (2.05);
    % negative quadratic-energy sectors |b|<|a|
    \fill[purple!18] (0,0) -- (3,3) -- (3,-3) -- cycle;
    \fill[purple!18] (0,0) -- (-3,3) -- (-3,-3) -- cycle;
    % cone around the growing line b=a, including both signs
    \fill[red!24] (0,0) -- (2.9,1.75) -- (1.75,2.9) -- cycle;
    \fill[red!24] (0,0) -- (-2.9,-1.75) -- (-1.75,-2.9) -- cycle;
  \end{scope}
  \draw[thick] (0,0) circle (2.05);
  \draw[->] (-2.55,0) -- (2.55,0) node[right] {$a$};
  \draw[->] (0,-2.55) -- (0,2.55) node[above] {$b$};
  \draw[dashed] (-1.75,-1.75) -- (1.75,1.75);
  \draw[dashed] (-1.75,1.75) -- (1.75,-1.75);
  \draw[red!70!black,very thick,->] (-1.55,-1.55) -- (1.55,1.55)
    node[above right] {$E^u$};
  \node[purple!75!black] at (1.35,-0.55) {$ H<0$};
  \node[red!75!black,align=center] at (0.75,1.45) {unstable\\cone};
  \node[anchor=north west] at (1.35,1.35) {$b=\sqrt\mu\,a$};
  \node[anchor=south west] at (1.35,-1.38) {$b=-\sqrt\mu\,a$};
\end{tikzpicture}
\end{center}
Thus the two theorems complement one another, and neither contains the
other.  A pure growing eigenvector satisfies $P_uZ_0=Z_0$ and therefore
belongs to the cone with $\kappa=1$ (and hence with every
$0<\kappa\le1$); in this sense the leading-mode family used by Jang is a
special subset of the cone data.
\end{remark}

\subsection{The local well-posedness input}
The conditional theorems require a strong radial solution on a logarithmic
time interval.  In the polytropic case this is supplied by the
physical-vacuum local theory.  Gu and Lei treat the three-dimensional
Euler--Poisson problem in a high-order weighted Lagrangian class
\cite{GU2016662}.  Since our instability arguments are radial, we use the
radial theory of Luo, Xin, and Zeng \cite{Xinzhouping}, which is formulated
on a ball, includes self-gravitation, and gives uniqueness in the range
needed below.  We record only the consequences used in the instability
argument; the full weighted energy is recalled in
Appendix~\ref{appendix-LXZ-energy}.

\begin{theorem}\label{Well-posedness}
Let $P(\rho)=C_\gamma\rho^\gamma$ with $6/5<\gamma<4/3$.  In the radial
Lagrangian variables of \cite{Xinzhouping}, rescale the reference ball to
$I=(0,1)$, write $r=r(x,t)$ for the physical radius and $v=r_t$, and let
$\widetilde{\mathcal E}_\gamma(v,t)$ denote the high-order weighted energy
in \cite[(9.1)]{Xinzhouping}; see
Appendix~\ref{appendix-LXZ-energy}.  For smooth radial initial data satisfying
the physical-vacuum condition of \cite[(3.3)--(3.4)]{Xinzhouping} and the
finite-energy compatibility conditions of the radial construction, the
following properties hold near a Lane--Emden equilibrium.
\begin{enumerate}[(i)]
\item The data generate a unique classical Lagrangian solution on a
nontrivial time interval.

\item In the identity initial chart, define the centered high-order size
\begin{equation}\label{definition-N-gamma}
 \mathcal N_\gamma(t)^2
 :=\|r(\cdot,t)-x\|_{H^2(I)}^2
   +\widetilde{\mathcal E}_\gamma(v,t).
\end{equation}
Then $t\mapsto\mathcal N_\gamma(t)$ is continuous on the local existence
interval, $\mathcal N_\gamma=0$ at the equilibrium, and, in a fixed sufficiently
small neighborhood,
\begin{equation}\label{high-energy-controls-bootstrap}
 \mathfrak d(X(t))
 =\|X(t)-\mathrm{id}\|_{W^{1,\infty}(S_\mu)}
 \le C\mathcal N_\gamma(t).
\end{equation}

\item As long as the high-order energy and Jacobian bounds remain controlled,
and the physical-vacuum constant stays nondegenerate, the local construction
can be restarted with a lifespan bounded below in terms of these quantities.  Thus a solution that remains in a fixed controlled
$\mathcal N_\gamma$-neighborhood can be continued by finitely many restarts
across any prescribed finite time interval.
\end{enumerate}
If $X(0)\ne\mathrm{id}$, the same statements are used after taking
$X(0)(S_\mu)$ as the physical initial domain and relabeling the radial flow.
A smooth radial diffeomorphism sufficiently close to the identity preserves
the physical-vacuum behavior under this push-forward; see
Appendix~\ref{appendix-LXZ-energy}.
\end{theorem}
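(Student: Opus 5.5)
The statement is essentially a packaging of the radial local theory of Luo, Xin, and Zeng \cite{Xinzhouping}, so the plan is to translate their results into the present Lagrangian variables rather than build a new theory. For (i), I will start from the radial formulation on the rescaled ball $I=(0,1)$. In that formulation $r(x,t)$ solves a degenerate wave-type equation. Its coefficient is $\bar\rho^{\gamma}$, where $\bar\rho$ is the reference density, and $\bar\rho$ vanishes like $(1-x)^{1/(\gamma-1)}$ by Lemma~\ref{(Existence of nonrotating star)}. For data meeting \cite[(3.3)--(3.4)]{Xinzhouping} and the compatibility conditions, their a priori estimate on $\widetilde{\mathcal E}_\gamma$ gives existence and uniqueness of a classical solution on $[0,T_0]$ with $T_0>0$. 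For $6/5<\gamma<4/3$ this lies in their admissible range, so (i) is a direct citation.

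For (ii), continuity of $\mathcal N_\gamma$ comes from two facts: the solution class gives $r-x\in C([0,T_0];H^2(I))$, and the weighted energy $\widetilde{\mathcal E}_\gamma(v,t)$ is continuous in time. I would check the latter on the energy identity, or cite it as part of their construction. At the equilibrium, $r=x$ and $v=0$, so all time derivatives vanish and $\mathcal N_\gamma=0$. The key point is \eqref{high-energy-controls-bootstrap}. Write $X(x)=\chi(|x|)x/|x|$ with $\chi(s)=R_\mu r(s/R_\mu,t)$. Then $DX$ has the radial eigenvalue $\partial_r\chi$ and the tangential eigenvalues $\chi/r$, each with multiplicity two. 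Hence $\mathfrak d(X)$ is controlled by
\[
\|\chi-s\|_{L^\infty}+\|\partial_s\chi-1\|_{L^\infty}+\|\chi/s-1\|_{L^\infty}.
\]
In one dimension, $H^2(I)\hookrightarrow C^1(\bar I)$, so the first two terms are bounded by $C\|r-x\|_{H^2}$. For the third I use $\chi(0)=0$ and write
\[
\frac{\chi}{s}-1=\int_0^1(\partial_s\chi(\tau s)-1)\,d\tau,
\]
which is bounded by the $C^1$ norm. One detail remains: $\|X-\mathrm{id}\|_{W^{1,\infty}}$ also involves the angular derivative. That derivative equals $(\chi/s-1)$ times a bounded tensor, so it is already covered. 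The $\widetilde{\mathcal E}_\gamma$ part is not even needed for this inequality.

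For (iii), I will use that the local existence time in \cite{Xinzhouping} depends only on an upper bound for $\widetilde{\mathcal E}_\gamma$ at the initial time, a lower bound for the Jacobian $r_x(r/x)^2$, and the physical-vacuum constant. The restart requires relabeling: at a restart time $t_1$, the new physical domain $X(t_1)(S_\mu)$ and the new density $\rho(t_1)$ are taken as reference data. I then need to verify that $\rho(t_1)$ still satisfies the physical-vacuum condition with comparable constants, and that the energy in the new chart is bounded by the old one. This is where the final paragraph of the statement enters. Composition with a smooth radial diffeomorphism near the identity changes $\bar\rho$ by the factor $1/J$. Since $J$ is close to $1$ in $C^1$, the distance-to-boundary asymptotics of Lemma~\ref{(Existence of nonrotating star)} are preserved with perturbed constants. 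Once these bounds are uniform on a controlled $\mathcal N_\gamma$-neighborhood, the lifespan has a uniform lower bound $\tau>0$, and $\lceil T/\tau\rceil$ restarts cover $[0,T]$.

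\textbf{Main obstacle.} The hardest step is this chart change. The energy $\widetilde{\mathcal E}_\gamma$ is built from weights $\bar\rho^{k}$ tied to the reference density, so after relabeling the weights change. I would first try to show that the weights are comparable pointwise, with ratios bounded in $C^k$ through the smoothness of the relabeling map. If that is not enough, the fallback is to avoid relabeling altogether and use the a priori estimate in the original chart directly, which requires only that $\widetilde{\mathcal E}_\gamma$ remains bounded.
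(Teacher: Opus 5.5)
Your proposal is correct and follows the paper's route: (i) and the restart mechanism in (iii) are taken directly from Luo--Xin--Zeng, and the chart change is handled through the push-forward density $\rho_\mu/J$ with $J$ close to one, exactly as in Appendix~\ref{appendix-LXZ-energy}. Your proof of \eqref{high-energy-controls-bootstrap} via $H^2(I)\hookrightarrow C^1(\bar I)$ and $\chi(0)=0$ supplies a step the paper only asserts, and the paper likewise leaves the weight-comparability issue you flag at restart as an unproved ``change-of-chart consequence''.
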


\begin{remark}
The particular distance \eqref{definition-N-gamma} is not canonical.  The
proof below uses only three of its properties: small compatible initial data
have small $\mathcal N_\gamma$; the estimate
\eqref{high-energy-controls-bootstrap} controls the deformation norm; and
bounded $\mathcal N_\gamma$ together with the geometric vacuum bounds gives
a uniform restart time.  In particular,
\begin{equation}\label{bootstrap-forces-N-gamma}
 \mathcal N_\gamma(t)\ge C^{-1}\mathfrak d(X(t)),
\end{equation}
so escape in either conditional theorem implies escape in the high-order
local-solution topology.
\end{remark}

\begin{remark}\label{general-EOS-wp-remark}
Theorem~\ref{Well-posedness} is used only for polytropic pressure laws.  For
a general pressure satisfying \eqref{property of pressure 1}--
\eqref{property of pressure 2}, one expects an analogous local theory to
require additional smoothness and weighted control of higher pressure
derivatives near vacuum.  Establishing such a theory, including the
compatibility and uniqueness statements needed for restart, is a separate
problem and is not assumed here.
\end{remark}

\begin{theorem}[Unconditional nonlinear instability in the polytropic class]
\label{Main thm 0}
Let $P(\rho)=C_\gamma\rho^\gamma$ with $6/5<\gamma<4/3$, and let
$(\rho_\mu,0)$ be a Lane--Emden equilibrium.  Then $n^u(\mu)=1$ and
$M'(\mu)\ne0$.  There is a fixed threshold $\nu_{\mathrm{low}}>0$ for the
lower-energy statement, and for every fixed $\kappa\in(0,1]$ there is a
threshold $\nu_{\mathrm{cone}}(\kappa)>0$ for the unstable-cone statement,
both measured in the high-order distance \eqref{definition-N-gamma}, with
the following properties.
\begin{enumerate}[(i)]
\item For every $\varepsilon_*>0$, there are smooth compatible radial data
such that
\[
 0<\mathcal N_\gamma(0)<\varepsilon_*,
 \qquad H(X(0),U(0))<H(\mathrm{id},0),
\]
and the corresponding classical solution satisfies
\[
 \mathcal N_\gamma(t_*)\ge\nu_{\mathrm{low}}
\]
for some $t_*$ no later than the time $T_\delta$ in
\eqref{lower-energy-escape-time}, with a fixed sufficiently small escape
threshold $\delta$.

\item For every fixed cone aperture $\kappa\in(0,1]$ and every
$\varepsilon_*>0$, there are smooth compatible radial data satisfying
\eqref{unstable-cone-initial-condition}, with
$0<\mathcal N_\gamma(0)<\varepsilon_*$, for which
\[
 \mathcal N_\gamma(t_*)\ge\nu_{\mathrm{cone}}(\kappa)
\]
no later than the time in \eqref{projection-escape-time}, with a fixed
sufficiently small $\delta\le\bar\delta_\kappa$.
\end{enumerate}
Thus the equilibrium is nonlinearly unstable in the same high-order
physical-vacuum topology in which the classical solution is constructed.
\end{theorem}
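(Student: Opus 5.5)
The plan is to combine the conditional Theorems~\ref{Main thm} and \ref{Main result when energy is higher} with the local theory of Theorem~\ref{Well-posedness} through a stopping-time continuation argument.

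\textbf{Step 1: the index facts.} For $6/5<\gamma<4/3$ the Lane--Emden family is scale invariant. Hence $M(\mu)=C\mu^{(3\gamma-4)/2}$ and $R_\mu=C'\mu^{(\gamma-2)/2}$, so $M'(\mu)\ne0$ for every $\mu>0$ because $3\gamma\ne4$. Since $M$ has no turning points, the index $n^u(\mu)$ is constant along the branch by Theorem~\ref{turning pt principle}. It therefore equals its small-$\mu$ value, which is $1$. Thus the hypotheses of both conditional theorems hold for every $\mu$.

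\textbf{Step 2: initial data.} For (i), I would take the data of Remark~\ref{positive-V-examples}: $U_0=0$ and $X_0-\mathrm{id}=M\tilde w$, where $\tilde w$ is a smooth radial approximation of $w_1$. For (ii), I would take $X_0-\mathrm{id}=M\tilde w$ and $U_0=M\sqrt{\mu_1}\rho_\mu\tilde w$. In both cases $\tilde w$ is chosen smooth and compactly supported away from the boundary in the radial variable, or at least compatible with the vacuum conditions. For small $M$, the push-forward density keeps the physical-vacuum behavior (Appendix remark). Then $\mathcal N_\gamma(0)=O(M)<\varepsilon_*$, and $H<0$ follows from Corollary~\ref{Estimation of F_1}. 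For (ii), $P_u$ is continuous on $\vec Z_\mu$ and $P_u(w_1,\sqrt{\mu_1}\rho_\mu w_1)=(w_1,\sqrt{\mu_1}\rho_\mu w_1)$. Hence $\|P_uZ_0\|\ge\kappa\|Z_0\|$ for any $\kappa\le1$ once $\tilde w$ is close enough to $w_1$, a condition independent of $M$. I also need $\|Z(0)\|_{\vec Z_\mu}\le\delta$, which holds for small $M$.

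\textbf{Step 3: continuation.} Fix the escape threshold $\delta$ small. Let $\nu$ be small enough that $C\nu\le\delta$ in \eqref{high-energy-controls-bootstrap} and that the $\nu$-neighbourhood lies in the region where item (iii) of Theorem~\ref{Well-posedness} gives a uniform restart time $\tau>0$. Define $t_*:=\sup\{t:\mathcal N_\gamma(s)<\nu \text{ on }[0,t]\}$. Suppose, for contradiction, that $t_*$ exceeds the logarithmic time $T$ (either $T_\delta$ or \eqref{projection-escape-time}). Finitely many restarts of length $\tau$ then produce a classical solution on $[0,T]$. I must check that this classical solution is a strong radial solution in the sense (S1)--(S4): $W^{1,\infty}$ control and continuity of $\mathfrak d$ come from \eqref{high-energy-controls-bootstrap} and the $H^2$ part of $\mathcal N_\gamma$, the weak Hamilton equations follow from the classical equations, and energy conservation holds for classical solutions. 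The conditional theorem then yields a time $t\le T$ with $\mathfrak d(X(t))\ge\delta$, so $\mathcal N_\gamma(t)\ge C^{-1}\delta\ge\nu$, contradicting $t_*>T$. Therefore $\mathcal N_\gamma(t_*)\ge\nu$ by continuity, at some $t_*\le T$. Setting $\nu_{\rm low}$ and $\nu_{\rm cone}(\kappa)$ equal to this $\nu$, where in (ii) $\delta\le\bar\delta_\kappa$ introduces the $\kappa$-dependence, completes the argument.

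\textbf{Main obstacle.} The delicate step is Step 3. The uniform restart requires more than smallness of $\mathcal N_\gamma$: it needs control of the Jacobian and the physical-vacuum constant. I would obtain these from the smallness of $\mathfrak d$, since the deformation factors $a,b,J$ stay in a compact subset of $(0,\infty)$, and from the relation $\rho\circ X=\rho_\mu/J$. One must also verify that relabeling to the current physical domain at each restart keeps the solution in the class, with uniqueness ensuring consistency. A secondary issue is confirming that $\widetilde{\mathcal E}_\gamma$ is small for the chosen data; this requires $\tilde w$ to be smooth enough in the weighted sense.
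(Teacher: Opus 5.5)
Your proposal is correct and follows the paper's proof in all essentials. It uses the same Lane--Emden scaling for $M'(\mu)\ne0$, the same small-amplitude approximations of a negative direction or unstable eigenvector as data, and the same stopping-time continuation feeding Theorems~\ref{Main thm} and \ref{Main result when energy is higher}; the one real variation is that you get $n^u\equiv1$ from constancy of the index away from mass extrema, whereas the paper uses $i_\mu=0$ together with the fact that scaling conjugates the linearized operators. Two small points need tightening: your final ``by continuity'' step needs the explicit observation, made in the paper, that the uniform restart time rules out breakdown $t_*=T_{\max}$ while $\mathcal N_\gamma<\nu$; and for $\kappa=1$ an approximation of $w_1$ only yields the cone condition with a slightly smaller aperture, so that endpoint needs the exact eigenvector or a separate argument (the paper's remark after the theorem carries the same caveat).
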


\begin{remark}
The admissible data in Theorem~\ref{Main thm 0} can be obtained by smooth,
compatible approximation.  For part~(i), approximate a negative direction
of the displacement Hessian and take $U(0)=0$; the strict lower-energy
inequality persists.  For part~(ii), approximate an unstable eigenvector;
after slightly decreasing the cone aperture, the strict cone inequality
persists.  Scaling the amplitude then makes $\mathcal N_\gamma(0)$
arbitrarily small.
\end{remark}

\begin{remark}
Jang's nonlinear analysis \cite{Jang_2013_nonlinear} provides another
natural high-order size in the Lane--Emden setting.  If
$\widetilde E_J$ denotes her full weighted energy, then
$\mathcal N_{\gamma,J}:=\widetilde E_J^{1/2}$ is centered at the equilibrium
and her weighted Hardy estimates imply
$\|X-\mathrm{id}\|_{W^{1,\infty}}\lesssim\mathcal N_{\gamma,J}$.
If the corresponding local theory is supplied with uniqueness and a uniform
restart criterion, the stopping-time argument below can therefore be run
with $\mathcal N_{\gamma,J}$ in place of $\mathcal N_\gamma$ and gives
strong-to-strong escape in that topology.

Jang's leading-mode argument proves more: the initial perturbation is small
in the high-order energy, while escape is detected in the weaker
zeroth-order instantaneous energy $E_J^0$ at the sharp time
\[
 T^\delta=\frac1{\sqrt{\mu_0}}\log\frac{2\theta_0}{\delta},
\]
so her result is strong-to-weak.  

Jang's strong-to-weak instability result already applies to a special
subset of the cone data.  Indeed, for a pure growing eigenmode one has
\(P_u Z_0=Z_0\), so the cone condition is satisfied for every
\(0<\kappa\le1\).  More generally, one expects Jang's Duhamel argument
to extend to finite-dimensional, spectrally localized cone data, and
also to the explicit lower-energy family in
Remark~\ref{positive-V-examples}, because in these cases the initial
high-order norm can be quantitatively controlled by the amplitudes of
finitely many unstable modes.

For the full cone class in Theorem~\ref{Main thm 0}, however, the
present argument does not by itself yield strong-to-weak instability.
The cone condition controls the size of the unstable projection only in
the lower-order phase norm, and does not prevent the center-stable
component from being large relative to it in the high-order
physical-vacuum topology.  Consequently, the high-order norm
\(\mathcal N_\gamma\) used for continuation may reach its bootstrap
threshold before the lower-order norm reaches a fixed escape size.
A strong-to-weak extension for the full cone would therefore require
additional high-order estimates valid up to the lower-order escape time,
or, alternatively, a stronger cone assumption that quantitatively
controls the high-order center-stable component by the unstable
amplitude.  Similar difficulties arise for the full lower-energy class.
These refinements are not needed for Theorem~\ref{Main thm 0}.
\end{remark}

\section{Technical Preliminaries}
In this section, we derive the weighted estimates used in the instability arguments.
\begin{lemma}[Endpoint Hardy inequalities, \cite{Hardy}]\label{Hardy's ineq}
Let $k<1$ and $a>0$.
\begin{enumerate}[(i)]
\item If
\begin{equation}\label{Hardy's ineq 1}
 \int_0^a r^k\bigl(|f(r)|^2+|f'(r)|^2\bigr)\,dr<\infty,
\end{equation}
then the trace $f(0+)$ exists and
\begin{equation}\label{Hardy's ineq 2}
 \int_0^a r^{k-2}|f(r)-f(0+)|^2\,dr
 \le C\int_0^a r^k|f'(r)|^2\,dr.
\end{equation}
\item More generally, if $0\le b<a$ and
\[
 \int_b^a(a-r)^k\bigl(|f(r)|^2+|f'(r)|^2\bigr)\,dr<\infty,
\]
then the trace $f(a-)$ exists and the reflected inequality
\begin{equation}\label{Hardy-right-endpoint}
 \int_b^a(a-r)^{k-2}|f(r)-f(a-)|^2\,dr
 \le C\int_b^a(a-r)^k|f'(r)|^2\,dr
\end{equation}
holds.  In particular, when the corresponding endpoint trace vanishes, the
difference terms in \eqref{Hardy's ineq 2} and
\eqref{Hardy-right-endpoint} may be replaced by $f$.
\end{enumerate}
The right-endpoint statement follows from the left-endpoint one by the
change of variables $s=a-r$.  Both forms are valid for every $k<1$,
including negative $k$.
\end{lemma}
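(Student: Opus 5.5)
The plan is to prove the left-endpoint inequality (i) directly and then obtain (ii) by reflection. The argument has three steps.

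\emph{Step 1: existence of the trace.} Since $k<1$, Cauchy--Schwarz gives
\[
 \int_0^a|f'|\,dr\le\Bigl(\int_0^a r^{-k}\,dr\Bigr)^{1/2}\Bigl(\int_0^a r^k|f'|^2\,dr\Bigr)^{1/2}<\infty ,
\]
because $\int_0^a r^{-k}\,dr$ converges precisely when $-k>-1$. Hence $f'\in L^1(0,a)$. Since $f$ is locally absolutely continuous, it follows that $f(0+)=f(a/2)-\int_0^{a/2}f'\,dr$ exists. Set $g=f-f(0+)$, so that $g(r)=\int_0^r f'(s)\,ds$. The same computation on $(0,r)$ gives the pointwise bound
\[
 |g(r)|^2\le \frac{r^{1-k}}{1-k}\int_0^r s^k|f'(s)|^2\,ds .
\]

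\emph{Step 2: the weighted estimate.} The bound in Step 1 is not enough by itself, since inserting it into $\int_0^a r^{k-2}|g|^2\,dr$ produces the divergent integral $\int_0^a r^{-1}\,dr$. I will therefore use the standard weighted Hardy argument with a Schur-type weight. Write $g(r)=\int_0^r s^{-\beta}\,s^{\beta}f'(s)\,ds$ and choose $\beta$ with $0<2\beta<1-k$. Cauchy--Schwarz gives
\[
 |g(r)|^2\le \frac{r^{1-2\beta}}{1-2\beta}\int_0^r s^{2\beta}|f'(s)|^2\,ds .
\]
Multiplying by $r^{k-2}$, integrating over $(0,a)$ and applying Fubini reduces the problem to estimating, for each fixed $s$,
\[
 \int_s^a r^{k-1-2\beta}\,dr\le \frac{s^{k-2\beta}}{2\beta-k}.
\]
This bound holds exactly when $k-2\beta<0$. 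Multiplying by $s^{2\beta}$ yields the weight $s^{k}$, and therefore
\[
 \int_0^a r^{k-2}|g|^2\,dr\le \frac{1}{(1-2\beta)(2\beta-k)}\int_0^a s^k|f'(s)|^2\,ds .
\]

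\emph{Main obstacle: choice of $\beta$.} The hardest point is that the two constraints on $\beta$, namely $2\beta<1-k$ and $2\beta>k$, must be satisfiable simultaneously. This requires $k<1-k$, i.e. $k<1/2$. For $1/2\le k<1$ this route fails, and I will instead use the classical integration-by-parts proof of Hardy's inequality. One computes
\[
 \int_\epsilon^a r^{k-2}g^2\,dr=\Bigl[\frac{r^{k-1}}{k-1}g^2\Bigr]_\epsilon^a-\frac{2}{k-1}\int_\epsilon^a r^{k-1}gg'\,dr .
\]
The boundary term at $\epsilon$ tends to $0$ because $|g|^2\le C r^{1-k}o(1)$ by Step 1. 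The boundary term at $a$ has the favorable sign, since $k-1<0$. Cauchy--Schwarz applied to the last integral then gives the bound with constant $4/(1-k)^2$. This argument in fact works for every $k<1$, so I would present it as the unified proof and drop the $\beta$-argument altogether.

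\emph{Step 3: the right endpoint.} Part (ii) follows from (i) by the substitution $s=a-r$ on the interval $(0,a-b)$. The final remark of the lemma is then immediate: when the relevant endpoint trace vanishes, $g=f$.
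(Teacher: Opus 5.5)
Your proof is correct. Step 1 uses $k<1$ to get $f'\in L^1(0,a)$, the existence of $f(0+)$, and the pointwise bound $|g(r)|^2\le \frac{r^{1-k}}{1-k}\int_0^r s^k|f'|^2\,ds$. The integration by parts on $[\epsilon,a]$ then works: the boundary term at $r=a$ has the good sign, and the term at $r=\epsilon$ is at most $\frac{1}{(1-k)^2}\int_0^\epsilon s^k|f'|^2\,ds\to0$. The absorption step is legitimate because $\int_\epsilon^a r^{k-2}g^2\,dr<\infty$ for each fixed $\epsilon$, and letting $\epsilon\to0$ gives (i) with $C=4/(1-k)^2$. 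The reflection $s=a-r$ on $(0,a-b)$ then gives (ii), exactly as the paper indicates.

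The paper does not prove (i) itself; it cites the classical weighted Hardy inequality from \cite{Hardy} and only records the reflection. Your argument makes the lemma self-contained and shows that only the derivative part of the hypothesis is needed.

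One correction to your side discussion: the Schur-weight route does not fail for $1/2\le k<1$. The first Cauchy--Schwarz step only needs $\int_0^r s^{-2\beta}\,ds<\infty$, i.e.\ $2\beta<1$, not $2\beta<1-k$. The Fubini step needs $2\beta>k$. These are compatible for every $k<1$. Taking $2\beta=(1+k)/2$ gives the constant $\frac{1}{(1-2\beta)(2\beta-k)}=\frac{4}{(1-k)^2}$, the same sharp constant as your integration-by-parts proof, so either argument works.
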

\begin{lemma} \label{X Y_mu less than BAX X_mu}
There exists $C>0$ such that, for every $X\in Z_\mu$,
\begin{equation} \label{X Y_mu less than BAX X_mu updated}
    \int_{S_\mu} \rho_\mu^{2-\gamma_0} |X|^2dx \leq C\|B_\mu A_\mu X\|_{X_\mu}^2, 
\end{equation}
and in particular, as $\gamma_0\in (\frac 65, 2)$,
    \begin{equation}\label{X-Y-from-BAX}
        \|X\|_{Y_\mu}\leq C\|B_\mu A_\mu X\|_{X_\mu}.
    \end{equation}
\end{lemma}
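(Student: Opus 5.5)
Since $X$ is radial, $X=v(r)\frac{x}{|x|}$, and I will reduce both inequalities to one-dimensional weighted Hardy estimates in $r$. Put $\sigma:=B_\mu A_\mu X=-\operatorname{div}(\rho_\mu X)$. In radial form,
\[
 \sigma=-\frac1{r^2}\bigl(r^2\rho_\mu v\bigr)'.
\]
Set $f:=r^2\rho_\mu v$, so that $f'=-r^2\sigma$. The target estimate \eqref{X Y_mu less than BAX X_mu updated} reads
\[
 \int_0^{R_\mu}\rho_\mu^{2-\gamma_0}r^2v^2\,dr
 =\int_0^{R_\mu}\rho_\mu^{-\gamma_0}r^{-2}f^2\,dr
 \lesssim\int_0^{R_\mu}\Phi''(\rho_\mu)r^{-2}|f'|^2\,dr .
\]
Both endpoint traces of $f$ vanish: $f(0+)=0$ because of the factor $r^2$, and $f(R_\mu-)=0$ because $\rho_\mu\to0$ at the boundary.

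I will split $[0,R_\mu]$ into an interior piece $[0,R_\mu/2]$ and a boundary piece $[R_\mu/2,R_\mu]$.

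On the interior piece, $\rho_\mu$ and $\Phi''(\rho_\mu)$ are bounded above and below. There I apply Lemma~\ref{Hardy's ineq}(i) with $k=-2$, using that the trace $f(0+)$ vanishes. This gives
\[
 \int_0^{a} r^{-4}f^2\,dr\le C\int_0^a r^{-2}|f'|^2\,dr,
\]
which is the required bound with constant weights.

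On the boundary piece, set $s=R_\mu-r$. Lemma~\ref{(Existence of nonrotating star)} gives $\rho_\mu\asymp s^{1/(\gamma_0-1)}$ and $\Phi''(\rho_\mu)\asymp s^{(\gamma_0-2)/(\gamma_0-1)}$. Hence the left weight is $s^{-\gamma_0/(\gamma_0-1)}$ and the right weight is $s^{k}$ with $k=\frac{\gamma_0-2}{\gamma_0-1}$. Since $\gamma_0<2$, we have $k<0<1$. The left exponent satisfies
\[
 k-2=\frac{\gamma_0-2-2\gamma_0+2}{\gamma_0-1}=-\frac{\gamma_0}{\gamma_0-1},
\]
so the exponents match exactly. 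Lemma~\ref{Hardy's ineq}(ii), applied with $f(R_\mu-)=0$, then gives the estimate near the vacuum boundary.

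\textbf{Main obstacle: the trace.} The step requiring care is justifying $f(R_\mu-)=0$ and the finiteness hypotheses of the Hardy lemma for a general $X\in Z_\mu$. My plan:
\begin{itemize}
\item Lemma~\ref{Hardy's ineq}(ii) guarantees that the trace exists, provided $\int s^k(f^2+f'^2)<\infty$. Here $s^kf^2\lesssim \rho_\mu^2 s^k v^2\lesssim \rho_\mu v^2$ near the boundary, so this integral is finite because $X\in Y_\mu$.
\item If the trace $c:=f(R_\mu-)$ were nonzero, then $|v|\gtrsim|c|/\rho_\mu$ near the boundary. This gives $\int\rho_\mu v^2\gtrsim\int s^{-1/(\gamma_0-1)}\,ds=\infty$, because $1/(\gamma_0-1)>1$. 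That contradicts $X\in Y_\mu$, so $c=0$.
\item The same reasoning, or a density argument using smooth compactly supported radial fields, handles the origin.
\end{itemize}

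Finally, \eqref{X-Y-from-BAX} follows from \eqref{X Y_mu less than BAX X_mu updated}, because $\rho_\mu\le C\rho_\mu^{2-\gamma_0}$ holds since $\rho_\mu$ is bounded and $2-\gamma_0<1$.
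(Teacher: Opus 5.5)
Your proposal is correct and follows essentially the same route as the paper. Both arguments use the substitution $f=r^2\rho_\mu v$, the Hardy inequality with $k=-2$ on $[0,R_\mu/2]$, the reflected Hardy inequality with $k=\frac{\gamma_0-2}{\gamma_0-1}$ on $[R_\mu/2,R_\mu]$, vanishing of the boundary trace because a nonzero trace would make $\|X\|_{Y_\mu}$ diverge (as $1/(\gamma_0-1)>1$), and the bound $\rho_\mu\le C\rho_\mu^{2-\gamma_0}$ for the second inequality.

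The one step to tighten is $f(0+)=0$. This does not follow from the factor $r^2$ alone. As in the paper, it follows from $\int_0^{R_\mu/2}r^{-2}|f|^2\,dr\lesssim\|X\|_{Y_\mu}^2<\infty$, which is your ``same reasoning'' option.
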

\begin{remark}
    This lemma implies that, for $X\in Z_\mu$, $\|B_\mu A_\mu X\|_{X_\mu}$ and $\|X\|_{Z_\mu}$ are equivalent norms.
\end{remark}
\begin{proof}
We first write $X(x)=v(r)\frac{x}{|x|}$. A direct calculation in radial coordinates gives
\[
 \operatorname{div}(\rho_\mu X)
 =\frac1{r^2}\partial_r(r^2\rho_\mu v).
\]
We apply Hardy's inequality to $f=r^2 \rho_\mu v$, with $k=-2$ on $[0,\frac{R_\mu}{2}]$ and $k=\frac{\gamma_0-2}{\gamma_0-1}$ on $[\frac{R_\mu}{2},R_\mu]$.  We first verify the integrability hypotheses of Lemma~\ref{Hardy's ineq} on both intervals.  Since $X\in Y_\mu$, 
\[
\int_0^{\frac{R_\mu}{2}} r^{-2}|f|^2dr\leq \int_0^{R_\mu} \rho_\mu^2 |v|^2 r^2dr\lesssim \int_{S_\mu}\rho_\mu |X|^2dx=\|X\|_{Y_\mu}^2<+\infty.
\]
Next,
\[
\begin{aligned}
  \int_0^{\frac{R_\mu}2}r^{-2} |f^\prime|^2 dr= &\int_0^{\frac{R_\mu}2}r^{-2} |\partial_r (r^2 \rho_\mu v)|^2 dr\leq \int_0^{R_\mu} r^2 |\nabla\cdot (\rho_\mu X)|^2dr \\
   \lesssim  &\int_{S_\mu}\Phi^{\prime\prime}(\rho_\mu)|\nabla\cdot (\rho_\mu X)|^2dx=\|B_\mu A_\mu X\|_{X_\mu}^2<+\infty, 
\end{aligned}
\]
where we used the fact that $\frac{1}{\Phi^{\prime\prime}(\rho_\mu)}$ is bounded. On $[\frac{R_\mu}2, R_\mu]$, by the two-sided estimate $\rho_\mu\asymp(R_\mu-r)^{\frac{1}{\gamma_0-1}}$, it follows that 
\[
\begin{aligned}
&\int_{\frac{R_\mu}2}^{R_\mu} (R_\mu -r)^{\frac{\gamma_0-2}{\gamma_0-1}} |f|^2 dr=
\int_{\frac{R_\mu}2}^{R_\mu} (R_\mu -r)^{\frac{\gamma_0-2}{\gamma_0-1}} \rho_\mu^2 |v|^2 r^2 dr \\ 
\lesssim  &\int_{0}^{R_\mu} (R_\mu -r) \rho_\mu |v|^2 r^2 dr \lesssim \int_{S_\mu} \rho_\mu |X|^2 dx <+\infty.
\end{aligned}
\]
Using $\Phi^{\prime\prime}(\rho_\mu)\asymp(R_\mu-r)^{\frac{\gamma_0-2}{\gamma_0-1}}$, we have
\[
\begin{aligned}
&\int_{\frac{R_\mu}2}^{R_\mu} (R_\mu -r)^{\frac{\gamma_0-2}{\gamma_0-1}} |f^\prime|^2dr \lesssim \int_{0}^{R_\mu} \Phi^{\prime \prime}(\rho_\mu) |\partial_r (r^2 \rho_\mu v)|^2 dr\\
\lesssim&  \int_{0}^{R_\mu} \Phi^{\prime \prime}(\rho_\mu) |\nabla\cdot(\rho_\mu X)|^2 r^2 dr=\int_{S_\mu} \Phi^{\prime \prime}(\rho_\mu) |\nabla\cdot(\rho_\mu X)|^2dx<+\infty.
\end{aligned} 
\]
Thus the hypotheses of Hardy's inequality are satisfied on both
subintervals.  The endpoint traces vanish.  Indeed,
$\int_0^{R_\mu/2}r^{-2}|f|^2\,dr<\infty$ forces $f(0+)=0$.  At the
vacuum boundary,
\[
 \|X\|_{Y_\mu}^2
 =4\pi\int_0^{R_\mu}\frac{|f(r)|^2}{\rho_\mu(r)r^2}\,dr<\infty.
\]
Since $\rho_\mu(r)\asymp(R_\mu-r)^{1/(\gamma_0-1)}$ and
$1/(\gamma_0-1)>1$, a nonzero terminal trace would make this integral
diverge; hence $f(R_\mu-)=0$.  We now apply the left-endpoint inequality
on $[0,R_\mu/2]$ and the reflected right-endpoint inequality on
$[R_\mu/2,R_\mu]$ to prove
\eqref{X Y_mu less than BAX X_mu updated}.  First,
\begin{equation}\label{Hardy app 1}
\begin{aligned}
   &\int_{\frac{R_\mu}2}^{R_\mu} \Phi^{\prime\prime}(\rho_\mu) \left|\frac{1}{r^2} \partial_r (r^2 \rho_\mu v)\right|^2 r^2dr\\
   (\Phi^{\prime\prime}(\rho_\mu)\asymp(R_\mu-r)^{\frac{\gamma_0-2}{\gamma_0-1}})
   \gtrsim &   \int_{\frac{R_\mu}2}^{R_\mu}(R_\mu-r)^{\frac{\gamma_0-2}{\gamma_0-1}} \left|\partial_r (r^2 \rho_\mu v)\right|^2dr\\
 \bigl(f(R_\mu-)=0,\ \eqref{Hardy-right-endpoint}\bigr) \gtrsim  &\int_{\frac{R_\mu}2}^{R_\mu}(R_\mu-r)^{\frac{\gamma_0-2}{\gamma_0-1}-2} \left| r^2 \rho_\mu v\right|^2dr\\
(\rho_\mu\asymp(R_\mu-r)^{\frac{1}{\gamma_0-1}}) \gtrsim& \int_{\frac{R_\mu}2}^{R_\mu}\rho_\mu ^{2-\gamma_0}r^4 |v|^2dr\\
\gtrsim &\int_{\frac{R_\mu}2}^{R_\mu} \rho_\mu^{2-\gamma_0} |v|^2 r^2dr.
\end{aligned}
\end{equation}
On the other hand, using the fact that both $\rho_\mu$ and
$\Phi^{\prime\prime}(\rho_\mu)$ are bounded from above and away from
zero on $[0,R_\mu/2]$, and using $f(0+)=0$ in
\eqref{Hardy's ineq 2}, we obtain
\begin{equation} \label{Hardy app 2}
    \begin{aligned}
         &\int^{\frac{R_\mu}2}_{0} \Phi^{\prime\prime}(\rho_\mu) \left|\frac{1}{r^2} \partial_r (r^2 \rho_\mu v)\right|^2 r^2dr
         \gtrsim \int^{\frac{R_\mu}2}_{0}  r^{-2} \left|\partial_r (r^2 \rho_\mu v)\right|^2dr\\
        (\ref{Hardy's ineq 2}) \gtrsim &  \int^{\frac{R_\mu}2}_{0} |\rho_\mu v|^2dr\gtrsim \int^{\frac{R_\mu}2}_{0}  \rho_\mu^{2-\gamma_0} |v|^2 r^2dr.
    \end{aligned}
\end{equation}
Combining \eqref{Hardy app 1} and \eqref{Hardy app 2}, we obtain 
\[
 \int_{S_\mu} \rho_\mu^{2-\gamma_0} |X|^2dx \leq C\|B_\mu A_\mu X\|_{X_\mu}^2.
\]
To prove the second assertion, note that $\gamma_0>1$ and $\rho_\mu$ is bounded, so
\[
 \rho_\mu=\rho_\mu^{2-\gamma_0}\rho_\mu^{\gamma_0-1}
 \le C\rho_\mu^{2-\gamma_0}.
\]
Therefore
\[
 \|X\|_{Y_\mu}^2
 =\int_{S_\mu}\rho_\mu|X|^2\,dx
 \le C\int_{S_\mu}\rho_\mu^{2-\gamma_0}|X|^2\,dx
 \le C\|B_\mu A_\mu X\|_{X_\mu}^2,
\]
which proves \eqref{X-Y-from-BAX}.
\end{proof}
\begin{lemma}\label{Newtonian-potential-bound}
There exists $C>0$ such that, for every $X\in Y_\mu$,
\begin{equation}\label{technical lemma 1}
    \int_{\mathbb{R}^3} |\nabla \phi|^2 \leq C \int_{S_\mu} \rho_\mu |X|^2,
\end{equation}
where $\phi$ satisfies the relation $\Delta \phi= 4\pi \nabla \cdot(\rho_\mu X)$. 
\end{lemma}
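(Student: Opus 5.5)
The estimate follows from the energy identity for the Poisson equation, applied in divergence form so that no derivative of $X$ is needed. We define $\phi$ as the unique decaying solution $\phi=-(-\Delta)^{-1}\bigl(4\pi\nabla\cdot(\rho_\mu X)\bigr)$, or equivalently $\nabla\phi$ as the $L^2$ gradient part (Helmholtz projection) of $4\pi\rho_\mu X$, extended by zero outside $S_\mu$. The plan is to first prove the estimate for smooth compactly supported $X$, and then pass to general $X\in Y_\mu$ by density, using that the map $X\mapsto\nabla\phi$ is linear.

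For smooth $X$ we multiply $\Delta\phi=4\pi\nabla\cdot(\rho_\mu X)$ by $\phi$ and integrate by parts over $\mathbb R^3$. Boundary terms at infinity vanish because $\phi=O(|x|^{-1})$ and $\nabla\phi=O(|x|^{-2})$. Since $\rho_\mu X$ has compact support, no boundary term arises at $\partial S_\mu$ either. This gives
\[
 \int_{\mathbb R^3}|\nabla\phi|^2\,dx
 =4\pi\int_{S_\mu}\rho_\mu X\cdot\nabla\phi\,dx .
\]
We bound the right-hand side by Cauchy--Schwarz:
\[
 4\pi\int_{S_\mu}\rho_\mu X\cdot\nabla\phi\,dx
 \le 4\pi\Bigl(\int_{S_\mu}\rho_\mu^2|X|^2\Bigr)^{1/2}\|\nabla\phi\|_{L^2}.
\]
Because $\rho_\mu\le\mu$ is bounded, $\int\rho_\mu^2|X|^2\le\mu\int\rho_\mu|X|^2$. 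Dividing by $\|\nabla\phi\|_{L^2}$ then yields the claim with $C=16\pi^2\mu$.

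The only delicate point is the meaning of $\phi$ when $X\in Y_\mu$ merely, since $\nabla\cdot(\rho_\mu X)$ need not be a function. We handle this by observing that $\rho_\mu X\in L^2(\mathbb R^3)$, because $\int\rho_\mu^2|X|^2\le\mu\|X\|_{Y_\mu}^2$. The distribution $\nabla\cdot(\rho_\mu X)$ therefore lies in $\dot H^{-1}$, and $\phi$ is defined in $\dot H^1$ by Lax--Milgram (Riesz representation), with the weak formulation being exactly the identity above tested against $\phi$. The bound then holds directly, without a density argument. In the radial setting one can check it even more concretely via Gauss's law: $\partial_r\phi=4\pi\rho_\mu v$ inside $S_\mu$ and $\partial_r\phi=0$ outside, so the estimate in fact holds with equality-type control. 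Either route closes the lemma.
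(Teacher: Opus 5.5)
Your proof is correct and is essentially the paper's argument. Both reduce the lemma to the fact that $\rho_\mu X\mapsto\nabla\phi$ is $4\pi$ times the $L^2$-orthogonal gradient (Helmholtz) projection, so $\|\nabla\phi\|_{L^2}\le 4\pi\|\rho_\mu X\|_{L^2}$, and then use boundedness of $\rho_\mu$; the paper reads off this contraction from the Fourier symbol $\xi\otimes\xi/|\xi|^2$, whereas you obtain it from the energy identity and Cauchy--Schwarz, with Lax--Milgram defining $\phi$ for general $X\in Y_\mu$, which is an equivalent formulation.
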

\begin{proof}
Extend $\rho_\mu X$ by zero outside $S_\mu$.  Up to the harmless factor
$4\pi$,
\[
 \nabla\phi=\nabla(-\Delta)^{-1}\operatorname{div}(\rho_\mu X).
\]
The Fourier multiplier of
$\nabla(-\Delta)^{-1}\operatorname{div}$ is the orthogonal projection
$\xi\otimes\xi/|\xi|^2$, whose $L^2$ operator norm is one.  Therefore
\[
 \|\nabla\phi\|_{L^2(\mathbb R^3)}
 \le C\|\rho_\mu X\|_{L^2(\mathbb R^3)}
 \le C\|X\|_{Y_\mu},
\]
where boundedness of $\rho_\mu$ was used in the last inequality.
\end{proof}

\begin{lemma}\label{weighted-density-divergence-bound}
There exists $C>0$ such that, for every $X\in Z_\mu$,
\begin{equation}\label{Technical lemma 2}
    \int_{S_\mu} \Phi^{\prime\prime}(\rho_\mu) |\nabla \rho_\mu \cdot X|^2dx +\int_{S_\mu} \Phi^{\prime\prime}(\rho_\mu)|\rho_\mu \nabla \cdot X|^2dx \leq C \| X\|_{Z_\mu}^2.
\end{equation}

\end{lemma}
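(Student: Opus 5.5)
The plan is to split $\operatorname{div}(\rho_\mu X)=\nabla\rho_\mu\cdot X+\rho_\mu\nabla\cdot X$. Then it suffices to bound the first term alone, since the second follows from the triangle inequality:
\[
 \int_{S_\mu}\Phi''(\rho_\mu)|\rho_\mu\nabla\cdot X|^2\,dx
 \le 2\|B_\mu A_\mu X\|_{X_\mu}^2
 +2\int_{S_\mu}\Phi''(\rho_\mu)|\nabla\rho_\mu\cdot X|^2\,dx .
\]
By Lemma~\ref{X Y_mu less than BAX X_mu} (and the remark after it), $\|B_\mu A_\mu X\|_{X_\mu}\le\|X\|_{Z_\mu}$. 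So everything reduces to the single estimate
\[
 \int_{S_\mu}\Phi''(\rho_\mu)|\nabla\rho_\mu\cdot X|^2\,dx\le C\|X\|_{Z_\mu}^2 .
\]

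For this estimate I will work radially, writing $X=v(r)x/|x|$, so that $\nabla\rho_\mu\cdot X=\rho_\mu'(r)v$. I split $S_\mu$ into $r<R_\mu/2$ and $R_\mu/2\le r<R_\mu$.

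\emph{Interior region.} Here $\rho_\mu$ and $\Phi''(\rho_\mu)$ are bounded above and below, and $\rho_\mu'$ is bounded because the profile is $C^1$ by Lemma~\ref{(Existence of nonrotating star)}. Hence the integrand is bounded by $C\rho_\mu|v|^2$, and this part is controlled by $\|X\|_{Y_\mu}^2$.

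\emph{Vacuum region.} Lemma~\ref{(Existence of nonrotating star)} gives $|\rho_\mu'|^2\asymp (R_\mu-r)^{\frac{2(2-\gamma_0)}{\gamma_0-1}}$ and $\Phi''(\rho_\mu)\asymp (R_\mu-r)^{\frac{\gamma_0-2}{\gamma_0-1}}$. Multiplying, the weight is
\[
 \Phi''(\rho_\mu)|\rho_\mu'|^2\asymp (R_\mu-r)^{\frac{2-\gamma_0}{\gamma_0-1}}=(R_\mu-r)^{\frac{1}{\gamma_0-1}-1}\asymp \rho_\mu^{2-\gamma_0}.
\]
The last equivalence uses $\rho_\mu^{2-\gamma_0}\asymp(R_\mu-r)^{\frac{2-\gamma_0}{\gamma_0-1}}$. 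Thus the boundary part is bounded by $C\int_{S_\mu}\rho_\mu^{2-\gamma_0}|X|^2\,dx$. This is exactly the left side of \eqref{X Y_mu less than BAX X_mu updated}, so Lemma~\ref{X Y_mu less than BAX X_mu} bounds it by $C\|B_\mu A_\mu X\|_{X_\mu}^2\le C\|X\|_{Z_\mu}^2$.

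The main point, and the only potential obstacle, is this weight matching at the vacuum boundary. The plain $Y_\mu$ weight $\rho_\mu$ is too weak there: since $2-\gamma_0<1$, the weight $\rho_\mu^{2-\gamma_0}$ is larger than $\rho_\mu$ near vacuum. So the Hardy-improved bound \eqref{X Y_mu less than BAX X_mu updated}, rather than \eqref{X-Y-from-BAX}, is essential, and the exponents must be checked to match exactly, as done above.
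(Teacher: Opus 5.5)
Your proposal is correct and follows essentially the same route as the paper. You bound the weight $\Phi''(\rho_\mu)|\nabla\rho_\mu|^2\le C\rho_\mu^{2-\gamma_0}$, invoke the Hardy-improved estimate \eqref{X Y_mu less than BAX X_mu updated} of Lemma~\ref{X Y_mu less than BAX X_mu} for the $\nabla\rho_\mu\cdot X$ term, and recover the $\rho_\mu\nabla\cdot X$ term from $\operatorname{div}(\rho_\mu X)$ by the triangle inequality. The differences are cosmetic: you treat the interior separately via the $Y_\mu$ weight, whereas the paper uses the single weight bound on all of $S_\mu$; and $\|B_\mu A_\mu X\|_{X_\mu}\le\|X\|_{Z_\mu}$ follows directly from the definition of the $Z_\mu$ norm, without needing Lemma~\ref{X Y_mu less than BAX X_mu}.
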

\begin{proof}
Write
\[
 A:=\nabla\rho_\mu\cdot X,
 \qquad B:=\rho_\mu\operatorname{div}X,
 \qquad A+B=\operatorname{div}(\rho_\mu X).
\]
The two-sided vacuum asymptotics imply, throughout $S_\mu$ after adjusting a
constant,
\[
 \Phi''(\rho_\mu)|\nabla\rho_\mu|^2
 \le C\rho_\mu^{2-\gamma_0}.
\]
Consequently, Lemma~\ref{X Y_mu less than BAX X_mu} gives
\[
 \int_{S_\mu}\Phi''(\rho_\mu)|A|^2\,dx
 \le C\int_{S_\mu}\rho_\mu^{2-\gamma_0}|X|^2\,dx
 \le C\|X\|_{Z_\mu}^2.
\]
Since $B=(A+B)-A$,
\[
 \int_{S_\mu}\Phi''(\rho_\mu)|B|^2\,dx
 \le2\int_{S_\mu}\Phi''(\rho_\mu)|A+B|^2\,dx
   +2\int_{S_\mu}\Phi''(\rho_\mu)|A|^2\,dx,
\]
which proves the result.
\end{proof}

\begin{lemma}[Weighted radial gradient estimate]\label{weighted-radial-gradient}
There exists $C>0$ such that every radial vector field
$Y(x)=v(r)x/r\in Z_\mu$ satisfies
\begin{equation}\label{weighted-radial-gradient-estimate}
 \int_{S_\mu}\rho_\mu^{\gamma_0}|DY|^2\,dx
 \le C\|Y\|_{Z_\mu}^2.
\end{equation}
\end{lemma}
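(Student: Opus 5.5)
For a radial field $Y=v(r)x/r$ the gradient satisfies
\[
 |DY|^2=|v'|^2+\frac{2|v|^2}{r^2},
\]
so it suffices to bound $\int\rho_\mu^{\gamma_0}|v'|^2r^2\,dr$ and $\int\rho_\mu^{\gamma_0}|v|^2\,dr$ by $\|Y\|_{Z_\mu}^2$.

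The plan is to express $v'$ through the divergence. One has $\operatorname{div}Y=v'+2v/r$, hence
\[
 \rho_\mu v'=\rho_\mu\operatorname{div}Y-\frac{2\rho_\mu v}{r}.
\]
The first term is controlled by Lemma~\ref{weighted-density-divergence-bound}, which gives $\int\Phi''(\rho_\mu)\rho_\mu^2|\operatorname{div}Y|^2\le C\|Y\|_{Z_\mu}^2$. Since $\Phi''(\rho_\mu)=P'(\rho_\mu)/\rho_\mu\asymp\rho_\mu^{\gamma_0-2}$ near the vacuum, and $\Phi''$ is bounded above and below on compact subsets where $\rho_\mu$ is bounded below, we get $\Phi''(\rho_\mu)\rho_\mu^2\asymp\rho_\mu^{\gamma_0}$ on all of $S_\mu$. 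Hence
\[
 \int\rho_\mu^{\gamma_0}|\operatorname{div}Y|^2\le C\|Y\|_{Z_\mu}^2.
\]

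It remains to bound the lower-order quantity $\int\rho_\mu^{\gamma_0}|v|^2r^{-2}\,dx$. I split the domain at $R_\mu/2$.

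On $[R_\mu/2,R_\mu]$ the factor $r^{-2}$ is bounded. Because $\gamma_0<2$, we have $\rho_\mu^{\gamma_0}\le C\rho_\mu\le C\rho_\mu^{2-\gamma_0}$, so this piece is controlled directly by $\|Y\|_{Y_\mu}^2$.

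Near the origin, where $\rho_\mu\asymp1$, I need $\int_0^{R_\mu/2}|v|^2\,dr\lesssim\|Y\|_{Z_\mu}^2$. This is exactly what \eqref{Hardy app 2} in the proof of Lemma~\ref{X Y_mu less than BAX X_mu} provides. That step applied Hardy's inequality \eqref{Hardy's ineq 2} with $k=-2$ to $f=r^2\rho_\mu v$, using $f(0+)=0$, and obtained $\int_0^{R_\mu/2}|\rho_\mu v|^2\,dr\lesssim\|B_\mu A_\mu Y\|_{X_\mu}^2$. Since $\rho_\mu$ is bounded below there, this yields the needed bound on $\int_0^{R_\mu/2}|v|^2\,dr$. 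Combining the two pieces gives $\int\rho_\mu^{\gamma_0}|v|^2r^{-2}\,dx\le C\|Y\|_{Z_\mu}^2$.

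Putting the pieces together, the decomposition $\rho_\mu v'=\rho_\mu\operatorname{div}Y-2\rho_\mu v/r$ gives
\[
 \int\rho_\mu^{\gamma_0}|v'|^2\,dx\le 2\int\rho_\mu^{\gamma_0}|\operatorname{div}Y|^2\,dx+8\int\rho_\mu^{\gamma_0}\frac{|v|^2}{r^2}\,dx\le C\|Y\|_{Z_\mu}^2,
\]
which proves the estimate.

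The only delicate point is the comparison $\Phi''(\rho_\mu)\rho_\mu^2\asymp\rho_\mu^{\gamma_0}$ uniformly on $S_\mu$. It follows from \eqref{property of pressure 2} near the vacuum and from the continuity and positivity of $P'$ on the interior, where $\rho_\mu$ is bounded away from zero. The identity for $|DY|^2$ holds away from $r=0$; the origin has measure zero and is harmless because $\int|v|^2\,dr<\infty$ there.
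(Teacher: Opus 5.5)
Your proof is correct and follows the paper's argument. It uses the same pointwise bound $|DY|^2\le C(|\operatorname{div}Y|^2+|v|^2/r^2)$, handles the divergence term via Lemma~\ref{weighted-density-divergence-bound} together with $\Phi''(\rho_\mu)\rho_\mu^2\asymp\rho_\mu^{\gamma_0}$, and applies the interior Hardy estimate \eqref{Hardy app 2} near the origin. The only difference is on $[R_\mu/2,R_\mu]$: you bound $\int\rho_\mu^{\gamma_0}|v|^2\,dr$ directly by $\|Y\|_{Y_\mu}^2$ via $\rho_\mu^{\gamma_0}\le C\rho_\mu$, instead of invoking the boundary Hardy estimate \eqref{Hardy app 1}. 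This is valid and slightly simpler, though the exponent condition actually used there is $\gamma_0>1$, not $\gamma_0<2$.
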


\begin{proof}
For a radial vector field \(Y(x)=v(r)e_r\), one has
\[
 |DY|^2
 =
 |v'|^2+2\frac{|v|^2}{r^2}
 \le
 C\left(
 |\operatorname{div}Y|^2+\frac{|v|^2}{r^2}
 \right).
\]
Since
\[
 \Phi''(\rho_\mu)\rho_\mu^2
 \asymp
 \rho_\mu^{\gamma_0},
\]
Lemma~\ref{weighted-density-divergence-bound} yields
\[
 \int_{S_\mu}
 \rho_\mu^{\gamma_0}|\operatorname{div}Y|^2\,dx
 \le
 C\int_{S_\mu}
 \Phi''(\rho_\mu)|\rho_\mu\operatorname{div}Y|^2\,dx
 \le C\|Y\|_{Z_\mu}^2.
\]

It remains to control
\[
 \int_{S_\mu}
 \rho_\mu^{\gamma_0}\frac{|v|^2}{r^2}\,dx
 =
 4\pi\int_0^{R_\mu}
 \rho_\mu^{\gamma_0}|v|^2\,dr.
\]
We split the radial interval into
\([0,R_\mu/2]\) and \([R_\mu/2,R_\mu]\).

On \([0,R_\mu/2]\), the density is bounded above and away
from zero. Hence
\[
 \rho_\mu^{\gamma_0}\le C\rho_\mu^2.
\]
The interior Hardy estimate \eqref{Hardy app 2} therefore gives
\[
 \int_0^{R_\mu/2}
 \rho_\mu^{\gamma_0}|v|^2\,dr
 \le
 C\int_0^{R_\mu/2}
 |\rho_\mu v|^2\,dr
 \le C\|Y\|_{Z_\mu}^2.
\]

On \([R_\mu/2,R_\mu]\), the boundary Hardy estimate
\eqref{Hardy app 1} gives
\[
 \|Y\|_{Z_\mu}^2
 \ge
 c\int_{R_\mu/2}^{R_\mu}
 \rho_\mu^{2-\gamma_0}|v|^2r^2\,dr.
\]
Since \(r\ge R_\mu/2\),
\[
 \int_{R_\mu/2}^{R_\mu}
 \rho_\mu^{2-\gamma_0}|v|^2\,dr
 \le C\|Y\|_{Z_\mu}^2.
\]
Moreover, because \(\gamma_0>1\) and \(\rho_\mu\) is bounded,
\[
 \rho_\mu^{\gamma_0}
 =
 \rho_\mu^{2-\gamma_0}\rho_\mu^{2\gamma_0-2}
 \le
 C\rho_\mu^{2-\gamma_0}.
\]
Consequently,
\[
 \int_{R_\mu/2}^{R_\mu}
 \rho_\mu^{\gamma_0}|v|^2\,dr
 \le C\|Y\|_{Z_\mu}^2.
\]
Combining the interior and boundary estimates gives
\[
 \int_0^{R_\mu}
 \rho_\mu^{\gamma_0}|v|^2\,dr
 \le C\|Y\|_{Z_\mu}^2.
\]
Multiplying by the angular factor \(4\pi\), we obtain
\[
 \int_{S_\mu}
 \rho_\mu^{\gamma_0}\frac{|v|^2}{r^2}\,dx
 \le C\|Y\|_{Z_\mu}^2.
\]
Combining this with the divergence estimate proves
\[
 \int_{S_\mu}
 \rho_\mu^{\gamma_0}|DY|^2\,dx
 \le C\|Y\|_{Z_\mu}^2.
\]
\end{proof}

The following estimate is the nonlinear Taylor bound used in both instability
arguments.  Its proof is deferred to Section~\ref{proof-remainder-section}.
\begin{lemma}[Radial nonlinear remainder]\label{Estimation of dH/dX-Lmu}
There exist $\delta_*>0$ and a nondecreasing modulus
$\omega:[0,\delta_*]\to[0,\infty)$, with $\omega(0)=0$, such that for
all radial $X,w\in Z_\mu$ satisfying
$\|X-\mathrm{id}\|_{W^{1,\infty}(S_\mu)}\le\delta_*$,
\begin{equation}\label{radial-nonlinear-remainder-estimate}
 \left|
 \left\langle
 \widetilde L_\mu(X-\mathrm{id})-
 \frac{\delta H}{\delta X}(X),w
 \right\rangle
 \right|
 \le
 \omega\!\left(\|X-\mathrm{id}\|_{W^{1,\infty}}
 \right)
 \|X-\mathrm{id}\|_{Z_\mu}\|w\|_{Z_\mu}.
\end{equation}
\end{lemma}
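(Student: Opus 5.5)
We need to prove Lemma \ref{Estimation of dH/dX-Lmu}, the radial nonlinear remainder estimate. The plan is to write everything in radial Lagrangian variables, expand $\frac{\delta H}{\delta X}(X)$ exactly, and subtract its linearization, which is $\widetilde L_\mu(X-\mathrm{id})$. Write $X(x)=\chi(r)x/r$ with $\chi=r+\xi$, and $w=\eta(r)x/r$. Put $a=\chi'=1+\xi'$, $b=\chi/r=1+\xi/r$ and $J=ab^2$. Then $\|\xi'\|_\infty+\|\xi/r\|_\infty\lesssim\delta:=\|X-\mathrm{id}\|_{W^{1,\infty}}$, and this is $\le\delta_*$.

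\textbf{Internal force (Piola form).} Since $\rho\circ X=\rho_\mu/J$, the internal energy $\int_{X(S_\mu)}\Phi(\rho)\,dx$ equals $\int_{S_\mu}\Phi(\rho_\mu/J)J\,dx$. Its variation in direction $w$ is
\[
 -\int_{S_\mu}P(\rho_\mu/J)\,\partial_J J[w]\,dx
 =-\int_{S_\mu}P(\rho_\mu/J)\Bigl(b^2\eta'+2ab\frac{\eta}{r}\Bigr)dx .
\]
This expression contains no derivatives of $\rho_\mu$, and this is what avoids derivative loss. Subtract the same expression evaluated at $J\equiv1$, which pairs with the equilibrium gravity, together with the first-order Taylor term. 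The remainder is a sum of integrals of the form $\int F(\rho_\mu,a,b)\,\partial\xi\,\partial\xi\,\partial\eta$, where $\partial$ stands for $(\cdot)'$ or $(\cdot)/r$. For the coefficients I will use two facts. First, $\partial_J^2[P(\rho_\mu/J)]$ is bounded by $C(\rho_\mu P'(\rho_\mu)+\rho_\mu^2P''(\rho_\mu))$, which is $\lesssim\rho_\mu^{\gamma_0}$ when $J$ lies in a compact set. Second, there is a modulus coming from the continuity of $P'$ and from \eqref{property of pressure 2}.

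Each remainder term is then bounded by
\[
 \omega(\delta)\int\rho_\mu^{\gamma_0}\,|\partial\xi|\,|\partial\eta|\,dx
 \le\omega(\delta)\Bigl(\int\rho_\mu^{\gamma_0}|D\widetilde X|^2\Bigr)^{1/2}\Bigl(\int\rho_\mu^{\gamma_0}|Dw|^2\Bigr)^{1/2}.
\]
Here one factor $|\partial\xi|\le\delta$ is absorbed into $\omega$. By Lemma~\ref{weighted-radial-gradient}, the right side is $\le C\omega(\delta)\|\widetilde X\|_{Z_\mu}\|w\|_{Z_\mu}$. The genuinely quadratic terms have the same structure; they are handled by a first-order Taylor expansion with an integral remainder.

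\textbf{Gravity via the shell theorem.} In the radial case, $-\frac1{8\pi}\int|\nabla V|^2=-\int_0^{\infty}\frac{m(s)^2}{2s^2}\,ds$, where $m(s)$ is the mass enclosed within radius $s$. In Lagrangian variables $m(\chi(r))=m_\mu(r)$ is conserved, so the gravitational energy becomes
\[
 -\int_{S_\mu}\frac{m_\mu(r)\rho_\mu}{\chi(r)}\,dx
\]
up to constants. Its $w$-variation is $\int\rho_\mu m_\mu\eta/\chi^2\,dx$. Expanding $\chi^{-2}=r^{-2}(1+\xi/r)^{-2}$ to second order gives a remainder bounded by
\[
 C\delta\int\rho_\mu\frac{m_\mu}{r^2}\,\frac{|\xi|}{r}\,|\eta|\,dx .
\]
Since $m_\mu/r^2\lesssim r$, this is $\lesssim\delta\int\rho_\mu|\xi||\eta|\,dx\le\delta\|\widetilde X\|_{Y_\mu}\|w\|_{Y_\mu}$. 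The mass-shell identity also shows that the linear part reproduces exactly the Newtonian term of $\widetilde L_\mu$, by the polarization formula in Remark~\ref{rmk for second variation of H}.

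\textbf{Main obstacle.} The hard step is to check that the internal linear part reproduces $\langle\Phi''(\rho_\mu)\sigma_{\widetilde X},\sigma_w\rangle$. After linearization one gets
\[
 \int P'(\rho_\mu)\rho_\mu\,\operatorname{div}\widetilde X\,\operatorname{div}w
\]
plus lower-order terms with coefficient $P(\rho_\mu)$. These must combine with the gravitational linear terms, through the equilibrium relation $\nabla P(\rho_\mu)=-\rho_\mu\nabla V_\mu$, into $\Phi''(\rho_\mu)\operatorname{div}(\rho_\mu\widetilde X)\operatorname{div}(\rho_\mu w)$. This requires integration by parts in $r$ with vanishing boundary terms at $r=0$ and at the vacuum $r=R_\mu$. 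I would justify the vanishing by first taking smooth compactly supported test fields, using the trace facts $f(0+)=f(R_\mu-)=0$ from the proof of Lemma~\ref{X Y_mu less than BAX X_mu}, and then passing to general $X,w\in Z_\mu$ by density. In the identity the cross terms carry weights $\Phi''|\nabla\rho_\mu|^2$, which are controlled by Lemma~\ref{weighted-density-divergence-bound}. Finally, $\omega$ is replaced by its nondecreasing envelope.
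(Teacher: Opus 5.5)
Your overall route is the paper's. You write the internal force in radial Piola-stress form, bound a Taylor remainder weighted by $\rho_\mu^{\gamma_0}$ and close it with Lemma~\ref{weighted-radial-gradient}, reduce gravity by the shell theorem to $-4\pi\int m_\mu\rho_\mu r^2/\chi\,dr$ with $m_\mu/r^3$ bounded, and identify $D^2(\mathcal U+\mathcal W)(\mathrm{id})$ with $\widetilde L_\mu$. The paper does not redo that last identification (your ``main obstacle''). It simply invokes Lemma~\ref{second variation of H}, which is proved on smooth radial fields and extended by density, the same argument you sketch.

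One step as written fails under the paper's hypotheses. This is your bound $|\partial_J^2 P(\rho_\mu/J)|\lesssim qP'(q)+q^2|P''(q)|\lesssim\rho_\mu^{\gamma_0}$, and with it the cubic representation $\int F\,\partial\xi\,\partial\xi\,\partial\eta$ in which one factor $|\partial\xi|\le\delta$ is absorbed. Only $P\in C^1((0,\infty))$ and the limit of $s^{1-\gamma_0}P'(s)$ are assumed. So $P''$ need not exist, and even when it does, nothing bounds $s^{2-\gamma_0}|P''(s)|$ near vacuum. Discard that option and run the whole internal remainder through the first-order integral Taylor formula, which you mention only for the ``genuinely quadratic terms,'' applied to the stress $\mathcal S_\rho(a,b)=P(\rho/(ab^2))(b^2,2ab)$. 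The $(a,b)$-derivatives of $\mathcal S_\rho$ involve only $P(q)$ and $qP'(q)$. The limits $P(s)/s^{\gamma_0}\to C_{\gamma_0}/\gamma_0$ and $sP'(s)/s^{\gamma_0}\to C_{\gamma_0}$ make $\rho^{-\gamma_0}D_{a,b}\mathcal S_\rho(a,b)$ continuous on $[0,\|\rho_\mu\|_\infty]\times K$, where $K$ is a compact neighborhood of $(1,1)$. Uniform continuity on this compact set gives a modulus $\omega_P$, and hence the bound $\omega_P(\delta)\int\rho_\mu^{\gamma_0}|DY|\,|Dw|\,dx$. Continuity of $P'$ on $(0,\infty)$ alone would not make this uniform down to $\rho=0$. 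This is exactly the paper's argument. It is also why the lemma asserts only a modulus $\omega=C(\omega_P+\delta)$ rather than a linear bound $C\delta$: the explicit factor $\delta$ comes only from the gravitational term, which you handle correctly.
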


\begin{remark}
The estimate is proved directly in radial Lagrangian variables.  The internal
energy is written in Piola stress form, and the gravitational energy is reduced
by Newton's shell theorem to a one-dimensional integral over mass shells.  This
avoids a derivative loss and does not require a global Hardy--Littlewood--Sobolev estimate.
Only the continuity at vacuum of
$P'(s)/s^{\gamma_0-1}$, which follows from
\eqref{property of pressure 2}, is used to obtain the modulus $\omega$.
\end{remark}

Before moving on, let $w_1,\ldots,w_n$ be an orthonormal basis of the negative eigenspace of $\widetilde{\mathbb L}_\mu$, repeated according to multiplicity, so that
\[
 \widetilde{\mathbb L}_\mu w_i=-\mu_iw_i,
 \qquad 0<\mu_1\le\cdots\le\mu_n.
\]
The existence and finiteness of this family are recalled in Lemma~\ref{spectrum of tilde mathbb L mu}.

\begin{corollary}[Projected nonlinear remainder]\label{Estimation of F}
For $\widetilde X=X-\mathrm{id}$, define
\begin{equation}\label{projected-remainder-definition}
 \mathcal R_-(X)
 :=\sum_{i=1}^n[\widetilde X,w_i]
 \left\langle
 \widetilde L_\mu\widetilde X-\frac{\delta H}{\delta X}(X),w_i
 \right\rangle.
\end{equation}
Whenever $\mathfrak d(X)\le\delta_*$,
\begin{equation}\label{projected-remainder-estimate}
 |\mathcal R_-(X)|
 \le C\omega(\mathfrak d(X))\|\widetilde X\|_{Z_\mu}^2.
\end{equation}
\end{corollary}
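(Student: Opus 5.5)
The estimate follows directly from Lemma~\ref{Estimation of dH/dX-Lmu} applied with each test function $w=w_i$, together with a bound on the coefficients $[\widetilde X,w_i]$. First, each eigenfunction $w_i$ lies in $Z_\mu$. The eigenvalue equation $\widetilde{\mathbb L}_\mu w_i=-\mu_iw_i$ is understood in the form sense on the form domain of $\widetilde{\mathbb L}_\mu$, which by Lemma~\ref{spectrum of tilde mathbb L mu} (recalled later) is $Z_\mu$ in the radial class. Hence $\|w_i\|_{Z_\mu}<\infty$, and these are finitely many fixed numbers depending only on $\mu$.

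Fix $X$ with $\mathfrak d(X)\le\delta_*$. Apply \eqref{radial-nonlinear-remainder-estimate} with $w=w_i$:
\[
 \left|\left\langle\widetilde L_\mu\widetilde X-\frac{\delta H}{\delta X}(X),w_i\right\rangle\right|
 \le\omega(\mathfrak d(X))\,\|\widetilde X\|_{Z_\mu}\|w_i\|_{Z_\mu}.
\]
For the coefficient, use Cauchy--Schwarz in $Y_\mu$, the orthonormality $\|w_i\|_{Y_\mu}=1$, and the trivial bound $\|\cdot\|_{Y_\mu}\le\|\cdot\|_{Z_\mu}$ coming from the definition of the $Z_\mu$ norm:
\[
 |[\widetilde X,w_i]|\le\|\widetilde X\|_{Y_\mu}\le\|\widetilde X\|_{Z_\mu}.
\]
Multiplying the two bounds and summing over $i=1,\dots,n$ gives \eqref{projected-remainder-estimate} with
\[
 C=\sum_{i=1}^n\|w_i\|_{Z_\mu}.
\]
This constant is finite because $n=n^-(\widetilde{\mathbb L}_\mu)<\infty$.

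The only point needing care is that $\widetilde X=X-\mathrm{id}\in Z_\mu$, so that both sides make sense. This is part of the hypothesis of Lemma~\ref{Estimation of dH/dX-Lmu}, and it is guaranteed along strong radial solutions by (S1). The other two ingredients are the finiteness of the negative eigenspace and the $Z_\mu$-regularity of the $w_i$, both taken from the spectral lemma. There is no genuine obstacle: all of the analytic difficulty sits in Lemma~\ref{Estimation of dH/dX-Lmu}, which is assumed here.
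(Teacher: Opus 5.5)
Your argument is correct and is essentially the paper's proof: apply Lemma~\ref{Estimation of dH/dX-Lmu} with $w=w_i$, bound each coefficient $[\widetilde X,w_i]$ by $\|\widetilde X\|_{Y_\mu}$, and sum over the finitely many negative modes. The differences are minor. You use the trivial bound $\|\widetilde X\|_{Y_\mu}\le\|\widetilde X\|_{Z_\mu}$, which is built into the definition of the $Z_\mu$ norm, where the paper cites Lemma~\ref{X Y_mu less than BAX X_mu}. Also, the fact that the form domain is $Z_\mu$ (so that $w_i\in Z_\mu$) is stated in Lemma~\ref{absolute-form-norm}, not in Lemma~\ref{spectrum of tilde mathbb L mu}.
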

\begin{proof}
By Cauchy's inequality in the finite-dimensional negative subspace and Lemma~\ref{Estimation of dH/dX-Lmu},
\[
 \begin{aligned}
 |\mathcal R_-(X)|
 &\le C\|\widetilde X\|_{Y_\mu}
 \omega(\mathfrak d(X))\|\widetilde X\|_{Z_\mu}\\
 &\le C\omega(\mathfrak d(X))\|\widetilde X\|_{Z_\mu}^2,
 \end{aligned}
\]
where Lemma~\ref{X Y_mu less than BAX X_mu} was used in the last step.
\end{proof}
\begin{corollary} \label{Estimation of F_1}
    Define the potential energy part in $H$ by 
    \[
    F_1(X)=\int_{X(S_\mu)}\Phi(\rho)dx-\frac{1}{8\pi}\int_{\mathbb{R}^3}|\nabla V|^2dx+c.
    \]
Then there exists $C_2>0$ such that, whenever
$\mathfrak d(X)\le\delta_*$,
\begin{equation}\label{potential-energy-Taylor-estimate}
 \left|F_1(X)-\frac12
 \left[\widetilde{\mathbb L}_\mu(X-\mathrm{id}),X-\mathrm{id}\right]\right|
 \le C_2\omega(\mathfrak d(X))\|X-\mathrm{id}\|_{Z_\mu}^2.
\end{equation}
\end{corollary}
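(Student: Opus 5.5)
We prove Corollary~\ref{Estimation of F_1} by a Taylor expansion of $F_1$ along the segment $X_s:=\mathrm{id}+s\widetilde X$, $s\in[0,1]$, with $\widetilde X:=X-\mathrm{id}$. The remainder is controlled by Lemma~\ref{Estimation of dH/dX-Lmu} applied along the whole segment.

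The first step is to check that the segment stays admissible. Since $\mathfrak d(X_s)=s\,\mathfrak d(X)\le\delta_*$, every $X_s$ is a radial orientation-preserving bi-Lipschitz map close to the identity, and $X_s-\mathrm{id}=s\widetilde X\in Z_\mu$. Hence Lemma~\ref{Estimation of dH/dX-Lmu} applies at each $X_s$.

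Next we differentiate $f(s):=F_1(X_s)$. Only the potential part $F_1$ depends on $X$, so $\delta F_1/\delta X=\delta H/\delta X$, and \eqref{first variation against X of H} gives
\[
 f'(s)=\left\langle\frac{\delta H}{\delta X}(X_s),\widetilde X\right\rangle .
\]
The normalization $H(\mathrm{id},0)=0$ gives $f(0)=0$. Integrating from $0$ to $1$ and subtracting the linear part yields
\[
 F_1(X)-\int_0^1 s\,\langle\widetilde L_\mu\widetilde X,\widetilde X\rangle\,ds
 =\int_0^1\left\langle\frac{\delta H}{\delta X}(X_s)-\widetilde L_\mu(s\widetilde X),\widetilde X\right\rangle ds .
\]
The subtracted term equals $\tfrac12\langle\widetilde L_\mu\widetilde X,\widetilde X\rangle=\tfrac12[\widetilde{\mathbb L}_\mu\widetilde X,\widetilde X]$.

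Now we bound the integrand. By Lemma~\ref{Estimation of dH/dX-Lmu} with $w=\widetilde X$, it is at most
\[
 \omega(s\,\mathfrak d(X))\,s\,\|\widetilde X\|_{Z_\mu}^2\le \omega(\mathfrak d(X))\,s\,\|\widetilde X\|_{Z_\mu}^2,
\]
since $\omega$ is nondecreasing. Integrating in $s$ gives \eqref{potential-energy-Taylor-estimate} with $C_2=1/2$.

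The main obstacle is justifying the identity $\frac{d}{ds}F_1(X_s)=\langle\frac{\delta H}{\delta X}(X_s),\widetilde X\rangle$ together with the absolute continuity of $f$. This is needed for $\widetilde X$ only in $Z_\mu$ with $W^{1,\infty}$ control, not smooth. We would first prove the identity for smooth radial $\widetilde X$, where it is the classical first-variation computation of Section~2 (property (S4) is used here). We then pass to general $\widetilde X$ by approximating it in $Z_\mu\cap W^{1,\infty}$ with uniformly bounded deformation. In the limit, both sides of the integrated identity converge. On the right this follows from the bound of Lemma~\ref{Estimation of dH/dX-Lmu} together with boundedness of $\widetilde L_\mu$ on $Z_\mu$. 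On the left it follows from continuity of $F_1$ under such convergence: the deformation factors $a,b$ lie in a compact set, so the radial Piola form of the internal energy and the shell-theorem form of the gravitational energy are continuous. If this density step becomes delicate, we would instead derive the Taylor bound directly from the radial formulas of Section~7.
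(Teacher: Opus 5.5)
Your proposal is correct and follows the paper's own route. You integrate the first variation along $X_\theta=\mathrm{id}+\theta(X-\mathrm{id})$, subtract $\frac12[\widetilde{\mathbb L}_\mu\widetilde X,\widetilde X]$, and bound the integrand by Lemma~\ref{Estimation of dH/dX-Lmu} with $w=X-\mathrm{id}$ together with monotonicity of $\omega$. Two of your additions fill in details the paper leaves implicit: tracking the factor $\theta$, which gives $C_2=1/2$, and justifying that $s\mapsto F_1(X_s)$ is differentiable. The latter is most easily done by differentiating under the integral in the radial Piola and shell-theorem formulas of Section~\ref{proof-remainder-section}; no density argument is then needed.
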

\begin{proof}
We write $X_\theta=\mathrm{id}+\theta(X-\mathrm{id})$, where $\theta \in (0,1)$. Then,
\[
\begin{aligned}
    &\left| F_1(X)-\frac{1}{2}\left[ \widetilde{\mathbb L}_\mu (X-\mathrm{id}),(X-\mathrm{id})\right] \right| \\
    =&\left| F_1(X)-F_1(\mathrm{id})-\frac{1}{2}\left[ \widetilde{\mathbb L}_\mu (X-\mathrm{id}),(X-\mathrm{id})\right] \right|\\
    \leq &\int_0^1 \left| \left\langle \widetilde L_\mu (X_\theta-\mathrm{id})- \frac{\delta H}{\delta X}(X_\theta), X-\mathrm{id} \right\rangle \right| d\theta\\
    \leq & C_2\omega(\mathfrak d(X))\big\| X-\mathrm{id}\big\|_{Z_\mu}^2.
\end{aligned}
\]
\end{proof}

\section{Instability of Nonrotating Stars}
\subsection{Properties of the Second Variation of the Hamiltonian}\label{Properties of the Second Variation of the Energy Functional}
Recall the weighted spaces and operators from Subsection~\ref{notation-subsection}.  By Lemma~\ref{second variation of H}, the displacement Hessian at the equilibrium is the form $\widetilde L_\mu$.  We first record the spectral facts from \cite{separable} that will be used in both instability arguments.
\begin{lemma}\label{spectrum of L_mu} $L_\mu$ is bounded and self-dual.
    Define $\mathbb{L}_\mu:=I_{X_\mu}L_\mu=I-\frac{4\pi}{\Phi^{\prime\prime}(\rho_\mu)}(-\Delta)^{-1}$, where $I_{X_\mu}=\frac{1}{\Phi^{\prime\prime}(\rho_\mu)}:X_\mu^*\to X_\mu $ is the isomorphism induced from the Riesz representation. Then, $\mathbb{L}_\mu$ is bounded and self-adjoint on $X_\mu$ and $\mathbb{L}_\mu-I$ is compact. 
\end{lemma}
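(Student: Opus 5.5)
Three claims need proof: $L_\mu$ is bounded and self-dual; $\mathbb L_\mu$ is bounded and self-adjoint on $X_\mu$; and $\mathbb L_\mu-I$ is compact. Since $\mathbb L_\mu=I-K$ with
\[
 K\sigma:=\frac{4\pi}{\Phi''(\rho_\mu)}(-\Delta)^{-1}\sigma ,
\]
the plan is to reduce everything to properties of $K$ on $X_\mu=L^2_{\Phi''(\rho_\mu)}(S_\mu)$ and then transfer them to $L_\mu=\Phi''(\rho_\mu)\,\mathbb L_\mu$ via the Riesz isomorphism $I_{X_\mu}$.

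\textbf{Step 1: a Hilbert--Schmidt bound for $K$.} The key point is that $\Phi''(\rho_\mu)$ is bounded below on $S_\mu$ and blows up only near the vacuum boundary. By \eqref{two-sided-Phi-asymptotics} and $\gamma_0<2$, it behaves like $(R_\mu-r)^{(\gamma_0-2)/(\gamma_0-1)}$ there, and it is comparable to a constant in the interior. Hence $1/\Phi''(\rho_\mu)$ is bounded, and
\[
 \|\sigma\|_{L^1(S_\mu)}+\|\sigma\|_{L^2(S_\mu)}\le C\|\sigma\|_{X_\mu}.
\]
Now write $K$ as an integral operator on $X_\mu$:
\[
 K\sigma(x)=\frac{1}{\Phi''(\rho_\mu(x))}\int_{S_\mu}\frac{\sigma(y)}{|x-y|}\,dy .
\]
Conjugating by the unitary map $\sigma\mapsto \Phi''(\rho_\mu)^{1/2}\sigma$ from $X_\mu$ to $L^2(S_\mu)$ turns $K$ into the operator with kernel
\[
 k(x,y)=\Phi''(\rho_\mu(x))^{-1/2}\,|x-y|^{-1}\,\Phi''(\rho_\mu(y))^{-1/2}.
\]
Since both weights are bounded and $|x-y|^{-2}$ is integrable on the bounded set $S_\mu\times S_\mu$ in three dimensions, we get $k\in L^2(S_\mu\times S_\mu)$. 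Therefore $K$ is Hilbert--Schmidt, hence bounded and compact on $X_\mu$.

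\textbf{Step 2: symmetry.} The kernel $k$ is symmetric, so $K$ is self-adjoint on $X_\mu$. Equivalently, $\langle K\sigma_1,\sigma_2\rangle_{X_\mu}=\iint \sigma_1(y)\sigma_2(x)|x-y|^{-1}\,dy\,dx$ is symmetric in $\sigma_1,\sigma_2$. It follows that $\mathbb L_\mu=I-K$ is bounded and self-adjoint, and $\mathbb L_\mu-I=-K$ is compact.

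\textbf{Step 3: transfer to $L_\mu$.} We have $L_\mu=I_{X_\mu}^{-1}\mathbb L_\mu$, where $I_{X_\mu}^{-1}$ is multiplication by $\Phi''(\rho_\mu)$, an isometric isomorphism $X_\mu\to X_\mu^*$. So $L_\mu$ is bounded. Moreover,
\[
 \langle L_\mu\sigma_1,\sigma_2\rangle=\langle \mathbb L_\mu\sigma_1,\sigma_2\rangle_{X_\mu},
\]
which is symmetric by Step 2, so $L_\mu$ is self-dual.

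\textbf{Main obstacle.} The only delicate point is the Hilbert--Schmidt estimate in Step 1, specifically the boundedness of $1/\Phi''(\rho_\mu)$ up to the vacuum boundary. This rests on $\gamma_0<2$, which makes $\Phi''(\rho_\mu)$ large rather than small near $r=R_\mu$. If one preferred to avoid kernels, an alternative is to use $L^{6/5}\to L^6$ boundedness of $(-\Delta)^{-1}$ together with the compactness of $L^2\hookrightarrow L^{6/5}$ on bounded sets; the kernel route is simpler.
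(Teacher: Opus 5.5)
Your proof is correct. It differs from the paper mainly in that the paper gives no argument of its own: it defers entirely to Lemma~3.6 of \cite{separable}, and your Hilbert--Schmidt argument is a self-contained replacement for that citation.

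The one analytic input you need is $\Phi''(\rho_\mu)\ge c>0$ on $S_\mu$. This holds because $\Phi''=P'/s$ is continuous and positive away from vacuum, and $\Phi''(\rho_\mu)\to\infty$ at $r=R_\mu$ when $\gamma_0<2$. The paper itself uses this same fact later, as boundedness of $1/\Phi''(\rho_\mu)$ in the Hardy-type estimates of Section~5.

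Given that lower bound, your argument goes through as follows:
\begin{itemize}
\item The symmetric kernel $\Phi''(\rho_\mu(x))^{-1/2}|x-y|^{-1}\Phi''(\rho_\mu(y))^{-1/2}$ is square integrable on $S_\mu\times S_\mu$, since $S_\mu$ is bounded and $|x-y|^{-2}$ is locally integrable in three dimensions.
\item This gives boundedness, self-adjointness and compactness in one step. It even gives the slightly stronger statement that $\mathbb L_\mu-I$ is Hilbert--Schmidt.
\item Your transfer to $L_\mu$ is also handled correctly. Multiplication by $\Phi''(\rho_\mu)$ is an isometry of $X_\mu$ onto $X_\mu^*=L^2_{1/\Phi''(\rho_\mu)}$ under the $L^2$ pairing, so $\langle L_\mu\sigma_1,\sigma_2\rangle=\langle\mathbb L_\mu\sigma_1,\sigma_2\rangle_{X_\mu}$.
\end{itemize}

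What each route buys: the citation keeps the paper short and places the operator inside the general Lin--Zeng framework, where it is used alongside the rest of their turning-point machinery. Your route is more transparent. It shows that only the lower bound on $\Phi''(\rho_\mu)$ and the boundedness of the support are needed, with no Sobolev, Hardy--Littlewood--Sobolev or Rellich-type embedding. It also makes clear that the argument works on the full space $X_\mu$, and hence on its rotation-invariant radial subspace.

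The $L^1+L^2$ bound in your Step~1 is true but not used by the kernel argument, so it can be dropped.
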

\begin{proof}
    See Lemma~3.6 in \cite{separable}.
\end{proof}
\begin{lemma} \label{Spectrum of mathbbL_mu}

$(1)$ $\widetilde L_\mu$ is self-dual.\\
$(2)$ $\widetilde{\mathbb L}_\mu:=B_\mu^\prime L_\mu B_\mu A_\mu$ is self-adjoint on the Hilbert space $(Y_\mu, [\cdot, \cdot])$, where $[\cdot,\cdot]:=\langle A_\mu \cdot, \cdot\rangle_{(L^2(\mathbb{R}^3))^3}$ is an equivalent inner product. Note that the latter is indeed an inner product because $\rho_\mu>0$ in $S_\mu$ and only vanishes on its boundary, so $\ker A_\mu=\{0\}$.\\
$(3)$ The spectrum of  $\widetilde{\mathbb L}_\mu$ is purely discrete with finite multiplicity and has no accumulating point except for $+\infty$. Moreover, the eigenfunctions of  $\widetilde{\mathbb L}_\mu$ form an orthonormal basis of $Y_\mu$.
\end{lemma}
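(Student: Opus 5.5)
The plan is to reduce all three assertions to the corresponding facts for the bounded operator $\mathbb L_\mu$ on $X_\mu$ (Lemma~\ref{spectrum of L_mu}) by factoring $\widetilde{\mathbb L}_\mu=B_\mu'L_\mu B_\mu A_\mu$ through the map $T:=B_\mu A_\mu:Y_\mu\supset Z_\mu\to X_\mu$, $TX=-\operatorname{div}(\rho_\mu X)$.

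For (1), I will work at the level of the quadratic form. For $X_1,X_2\in Z_\mu$, Remark~\ref{rmk for second variation of H} gives
\[
\langle\widetilde L_\mu X_1,X_2\rangle=\langle L_\mu TX_1,TX_2\rangle .
\]
This expression is symmetric because $L_\mu$ is self-dual. The form is also bounded on $Z_\mu$. Indeed, Lemma~\ref{X Y_mu less than BAX X_mu} shows that $\|TX\|_{X_\mu}$ is an equivalent norm on $Z_\mu$, and $L_\mu$ is bounded by Lemma~\ref{spectrum of L_mu}. Hence $\widetilde L_\mu:Z_\mu\to Z_\mu^*$ is self-dual.

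For (2), I will show that the form $\mathfrak q(X):=\langle L_\mu TX,TX\rangle$ on $Y_\mu$, with domain $Z_\mu$, is closed and semibounded, and then apply the Kato representation theorem. Write $\mathbb L_\mu=I-K$ with $K$ compact and self-adjoint on $X_\mu$. This gives
\[
\mathfrak q(X)=\|TX\|_{X_\mu}^2-\langle KTX,TX\rangle_{X_\mu}\ge -\|K\|\,\|TX\|^2_{X_\mu}.
\]
Semiboundedness therefore needs a finer argument. Because $K$ is compact, for each $\epsilon>0$ it splits as a finite-rank part plus a part of norm less than $\epsilon$. The finite-rank part is controlled by finitely many functionals $\langle TX,\phi_j\rangle_{X_\mu}$. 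I will approximate the $\phi_j$ by elements of the form $\Phi''(\rho_\mu)^{-1}\cdot(\text{smooth})$ and integrate by parts, $\langle TX,\phi\rangle=\int\rho_\mu X\cdot\nabla(\Phi''\phi)$, so that these functionals are bounded by $C_\epsilon\|X\|_{Y_\mu}$. This yields
\[
\mathfrak q(X)\ge (1-2\epsilon)\|TX\|^2-C_\epsilon\|X\|_{Y_\mu}^2 .
\]
Together with Lemma~\ref{X Y_mu less than BAX X_mu}, this shows the form norm is equivalent to $\|\cdot\|_{Z_\mu}$. Closedness comes from $Z_\mu$ being complete, which holds because $T$ is closed ($\operatorname{div}$ being a distributional operator). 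Density of $Z_\mu$ in $Y_\mu$ follows from smooth compactly supported fields. The operator associated with $\mathfrak q$ in the $[\cdot,\cdot]$ inner product is $A_\mu^{-1}\widetilde L_\mu=\widetilde{\mathbb L}_\mu$, which is therefore self-adjoint. The remark about $[\cdot,\cdot]$ being an inner product follows from $\rho_\mu>0$ in the open ball.

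For (3), it suffices to show that $\widetilde{\mathbb L}_\mu+\lambda$ has compact resolvent for large $\lambda$. This is equivalent to compactness of the embedding $Z_\mu\hookrightarrow Y_\mu$ in the radial setting. I will prove that embedding by localization. On $[\epsilon,R_\mu-\epsilon]$, control of $f=r^2\rho_\mu v$ and $f'$ in weighted $L^2$ gives $H^1$ bounds and hence compactness via Rellich. Near $r=0$ and near $r=R_\mu$, the Hardy bounds \eqref{Hardy app 1}--\eqref{Hardy app 2} give uniform smallness of the $Y_\mu$-mass: near the boundary one has the gain $\rho_\mu\le \rho_\mu^{2-\gamma_0}\cdot\rho_\mu^{\gamma_0-1}$ with $\rho_\mu^{\gamma_0-1}\to0$, and near the origin the Hardy weight $r^{-2}$ versus $r^2$ gives the smallness. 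A compact resolvent yields a discrete spectrum of finite multiplicity accumulating only at $+\infty$ (semiboundedness excludes $-\infty$), and the spectral theorem then provides an orthonormal eigenbasis of $Y_\mu$.

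The main obstacle I expect is the semiboundedness step: the Newtonian term must be bounded by the weak $Y_\mu$ norm and not merely by the form norm. As an alternative to the integration-by-parts argument above, I would use Lemma~\ref{Newtonian-potential-bound} directly. Since the gravitational part equals $\frac1{4\pi}\|\nabla(|x|^{-1}*\sigma)\|^2$ with $\sigma=-\operatorname{div}(\rho_\mu X)$, that lemma bounds it by $C\|X\|_{Y_\mu}^2$. This gives $\mathfrak q(X)\ge\|TX\|_{X_\mu}^2-C\|X\|_{Y_\mu}^2$ immediately and bypasses the compactness approximation.
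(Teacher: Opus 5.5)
Your proof is correct, but it follows a different route from the paper. The paper gives no argument of its own. It cites Lemmas~2.9 and~3.22 of Lin--Zeng \cite{separable}, so (1)--(2) are imported from their abstract separable-Hamiltonian framework and the discreteness in (3) from their radial analysis of gaseous stars.

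You verify everything directly:
\begin{itemize}
\item Semiboundedness, and the identification of the form domain with $Z_\mu$, come from Lemma~\ref{Newtonian-potential-bound}. This is the same estimate the paper later invokes in Lemma~\ref{absolute-form-norm}.
\item Kato's representation theorem then gives (2).
\item For (3), you use \eqref{Hardy app 1}--\eqref{Hardy app 2} near $r=0$ and $r=R_\mu$, together with Rellich on $[\epsilon,R_\mu-\epsilon]$, to get compactness of $Z_\mu\hookrightarrow Y_\mu$.
\end{itemize}
Your first, finite-rank argument for semiboundedness is also valid. It uses only compactness of $\mathbb L_\mu-I$ and density of $\Phi''(\rho_\mu)^{-1}C_c^\infty$ in $X_\mu$, which is the kind of mechanism an abstract framework relies on. The direct Newtonian bound makes it unnecessary, however.

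What your route buys is self-containedness and transparency. It shows that radial symmetry is needed only in (3). Without it, $\ker(B_\mu A_\mu)$ contains every field $\rho_\mu^{-1}\operatorname{curl}\Psi$ with $\Psi$ compactly supported in the interior. Then $0$ would be an eigenvalue of infinite multiplicity and (3) would fail. Your route also yields the form-domain equality that Lemma~\ref{absolute-form-norm} needs. The citation buys brevity and keeps the lemma inside the Lin--Zeng framework, which the paper relies on anyway for the index and trichotomy statements.

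Two small points you should state explicitly:
\begin{itemize}
\item Your (1) is proved at the form level, for $Z_\mu\to Z_\mu^*$. Operator-level self-duality of $\widetilde L_\mu:Y_\mu\supset D(\widetilde L_\mu)\to Y_\mu^*$ then follows from (2), because $A_\mu$ is the Riesz map of $[\cdot,\cdot]$.
\item Identifying the Kato operator with $B_\mu'L_\mu B_\mu A_\mu$ requires reading $B_\mu'$ as the adjoint of $B_\mu$. Its domain is then $\{X\in Z_\mu: L_\mu B_\mu A_\mu X\in D(B_\mu')\}$. This is routine.
\end{itemize}
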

\begin{proof}
    See Lemmas~2.9 and 3.22 in \cite{separable}.
\end{proof}

\begin{lemma}\label{spectrum of tilde mathbb L mu}
Assume $M'(\mu)\ne0$.  Then
\[
 \ker\widetilde{\mathbb L}_\mu=\{0\}.
\]
Moreover,
\begin{equation}\label{negative-index-displacement-hessian}
 n^-(\widetilde{\mathbb L}_\mu)
 =n^-\!\left(L_\mu\big|_{R(B_\mu A_\mu)}\right),
\end{equation}
and the condition $n^u(\mu)>0$ implies that this number is positive.
Consequently the negative spectrum of $\widetilde{\mathbb L}_\mu$ consists
of finitely many eigenvalues
\[
 -\mu_n\le\cdots\le-\mu_1<0,
 \qquad 0<\mu_1\le\cdots\le\mu_n.
\]
\end{lemma}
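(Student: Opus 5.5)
We argue through the factorization $\widetilde{\mathbb L}_\mu=B_\mu'L_\mu B_\mu A_\mu$. The quadratic form of $\widetilde{\mathbb L}_\mu$ on $Y_\mu$ is
\[
 [\widetilde{\mathbb L}_\mu X,X]=\langle L_\mu B_\mu A_\mu X,B_\mu A_\mu X\rangle ,
\]
so it is the pullback of the form of $L_\mu$ under $T:=B_\mu A_\mu:Z_\mu\to X_\mu$. By Lemma~\ref{X Y_mu less than BAX X_mu}, $T$ is injective with closed range and is an isomorphism of $Z_\mu$ onto $R(T)$. Hence negative subspaces of $\widetilde{\mathbb L}_\mu$ in the form domain $Z_\mu$ correspond bijectively to negative subspaces of $L_\mu|_{R(T)}$.

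Since the spectrum of $\widetilde{\mathbb L}_\mu$ is discrete with an orthonormal eigenbasis (Lemma~\ref{Spectrum of mathbbL_mu}), $n^-(\widetilde{\mathbb L}_\mu)$ equals the maximal dimension of a subspace of $Z_\mu$ on which the form is negative. This gives \eqref{negative-index-displacement-hessian}.

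Next we identify $R(T)$ in the radial class. For radial $X=v(r)x/r$ we have $T X=-r^{-2}\partial_r(r^2\rho_\mu v)$. The vanishing traces $f(0+)=f(R_\mu-)=0$ established in the proof of Lemma~\ref{X Y_mu less than BAX X_mu} give $\int_{S_\mu}TX\,dx=0$. Conversely, for a radial $\sigma\in X_\mu$ with zero mean, we set $r^2\rho_\mu v=-\int_0^r s^2\sigma\,ds$. Weighted Hardy/Cauchy–Schwarz estimates near vacuum, of the same type as in that lemma, show $X\in Y_\mu$. Thus
\[
 R(T)=X_{\mu,r}\cap\{\textstyle\int\sigma=0\}
 =\{\sigma:\langle \Phi''(\rho_\mu)\sigma,1/\Phi''(\rho_\mu)\rangle=0\},
\]
which is a codimension-one constrained space. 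This is exactly the mass-constrained setting of \cite{separable}. There the turning-point principle states that $n^-(L_\mu|_{\{\int\sigma=0\}})=n^-(L_\mu)-i_\mu$; this is the standard index formula for a constrained quadratic form, with the sign of $\langle L_\mu^{-1}1,1\rangle$ computed via $\partial_\mu\rho_\mu$, and that sign is what determines $i_\mu$. Therefore the number in \eqref{negative-index-displacement-hessian} equals $n^u(\mu)$, and it is positive when $n^u(\mu)>0$.

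For the kernel, suppose $\widetilde{\mathbb L}_\mu X=0$. Then $\nabla(L_\mu TX)=0$ on $S_\mu$, so $L_\mu\sigma=\text{const}$ with $\sigma=TX$ of zero mass, i.e.
\[
 \Phi''(\rho_\mu)\sigma-4\pi(-\Delta)^{-1}\sigma=c .
\]
Differentiating the equilibrium family in $\mu$ gives $L_\mu\partial_\mu\rho_\mu=\text{const}$ as well. The radial kernel of $L_\mu$ is trivial by ODE uniqueness for $D_\mu^0$: the radial ODE solution regular at the origin is unique up to scaling. Hence $\sigma$ is a multiple of $\partial_\mu\rho_\mu$, or zero. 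If $c\ne0$, the multiple is nonzero, and its mass is a nonzero multiple of $M'(\mu)\ne0$, which contradicts $\int\sigma=0$. If $c=0$, then $\sigma\in\ker L_\mu$ radial, so $\sigma=0$. Injectivity of $T$ then gives $X=0$.

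Finiteness and ordering of the negative eigenvalues follow from Lemma~\ref{Spectrum of mathbbL_mu}(3) together with $n^-<\infty$, since $n^-(L_\mu)<\infty$ because $\mathbb L_\mu-I$ is compact.

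\textbf{Main obstacle.} The delicate step is the exact identification of $R(B_\mu A_\mu)$ with the mean-zero radial subspace of $X_\mu$, including closedness and surjectivity at the vacuum weight. We would handle it with the Hardy inequality of Lemma~\ref{Hardy's ineq} applied to $f=-\int_0^r s^2\sigma\,ds$, after which the index count and the kernel argument reduce to quoting \cite{separable}.
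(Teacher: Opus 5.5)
The gap is in your kernel step. Your treatment of the index identity matches the paper's: you pull back the form of $L_\mu$ through the injective, bounded-below map $T=B_\mu A_\mu$. Your identification of $R(T)$ with the mean-zero radial subspace is also correct. Surjectivity is just Cauchy--Schwarz, since $\int_r^{R_\mu}\Phi''(\rho_\mu)^{-1}ds\asymp(R_\mu-r)^{1/(\gamma_0-1)}\asymp\rho_\mu(r)$, so it is not really the main obstacle. The paper only needs the inclusion $R(T)\subset\{\int\sigma=0\}$ and quotes \cite{separable} for positivity.

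\textbf{The kernel step.} You claim the radial kernel of $L_\mu$ is trivial ``by ODE uniqueness for $D_\mu^0$.'' Uniqueness at the origin only gives $\dim\le1$. It does not decide whether the regular solution matches the decaying exterior potential at $r=R_\mu$.

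The claim is in fact false exactly when $\frac d{d\mu}(M(\mu)/R_\mu)=0$. Then $L_\mu\partial_\mu\rho_\mu=0$, so $\partial_\mu\rho_\mu$ is a nonzero radial kernel element. This case is compatible with $M'(\mu)\ne0$: Theorem~\ref{turning pt principle} says the two derivatives never vanish together, and the definition of $i_\mu$ explicitly includes it.

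In that case your step ``hence $\sigma$ is a multiple of $\partial_\mu\rho_\mu$'' has no justification. It would require subtracting $(c/k)\partial_\mu\rho_\mu$ with $k=-\frac d{d\mu}(M/R_\mu)=0$. Nothing in your argument then excludes $L_\mu\sigma=c$ with $c\ne0$. Your $c=0$ branch (``$\sigma\in\ker L_\mu$, so $\sigma=0$'') also rests on the false triviality.

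\textbf{The missing idea.} You need the self-duality pairing the paper uses. Pairing $L_\mu\sigma=c$ with $\partial_\mu\rho_\mu$ gives
\[
 cM'(\mu)=\langle L_\mu\sigma,\partial_\mu\rho_\mu\rangle=\langle\sigma,L_\mu\partial_\mu\rho_\mu\rangle=-\frac d{d\mu}\Big(\frac{M(\mu)}{R_\mu}\Big)\int_{S_\mu}\sigma\,dx=0,
\]
so $c=0$ in every case. Then $\sigma\in\ker L_\mu$, which by \cite{separable} is either $\{0\}$ or $\mathrm{span}\{\partial_\mu\rho_\mu\}$. In the latter case, zero mass together with $M'(\mu)\ne0$ forces $\sigma=0$.

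Even in the nondegenerate case, triviality of the radial kernel should be quoted from \cite{separable} rather than attributed to uniqueness at the origin. Alternatively, prove it: the same pairing shows a kernel element has zero mass, so its potential has zero Cauchy data at $R_\mu$, and the radial ODE then forces it to vanish. With this repair the rest of your proposal goes through.
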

\begin{proof}
We give the kernel argument in detail.  Let
$w\in D(\widetilde{\mathbb L}_\mu)$ satisfy
$\widetilde{\mathbb L}_\mu w=0$, and set
\[
 \sigma:=B_\mu A_\mu w=-\operatorname{div}(\rho_\mu w).
\]
Since
$\widetilde{\mathbb L}_\mu=B_\mu'L_\mu B_\mu A_\mu$, one has
\[
 B_\mu'L_\mu\sigma=\nabla(L_\mu\sigma)=0
\]
in $S_\mu$.  Hence
\begin{equation}\label{kernel-Lsigma-constant}
 L_\mu\sigma=c
\end{equation}
for some constant $c$.  The range of $B_\mu A_\mu$ consists of radial
zero-mass perturbations; in particular,
\begin{equation}\label{sigma-zero-mass}
 \int_{S_\mu}\sigma\,dx=0.
\end{equation}
Differentiating the equilibrium equation along the steady-state family gives
(see \cite[Section~3.5]{separable})
\begin{equation}\label{steady-family-derivative-identity}
 L_\mu\partial_\mu\rho_\mu
 =-\frac d{d\mu}\!\left(\frac{M(\mu)}{R_\mu}\right),
\end{equation}
where the right-hand side is a constant function on $S_\mu$.  Using the
self-duality of $L_\mu$, \eqref{kernel-Lsigma-constant},
\eqref{sigma-zero-mass}, and \eqref{steady-family-derivative-identity}, we
obtain
\[
 cM'(\mu)
 =\langle L_\mu\sigma,\partial_\mu\rho_\mu\rangle
 =\langle\sigma,L_\mu\partial_\mu\rho_\mu\rangle
 =-\frac d{d\mu}\!\left(\frac{M(\mu)}{R_\mu}\right)
  \int_{S_\mu}\sigma\,dx=0.
\]
Since $M'(\mu)\ne0$, it follows that $c=0$, and hence
$\sigma\in\ker L_\mu$.

The kernel description in \cite[Section~3.5]{separable} has two cases.  If
$\frac d{d\mu}(M(\mu)/R_\mu)\ne0$, then $\ker L_\mu=\{0\}$, so
$\sigma=0$.  If
$\frac d{d\mu}(M(\mu)/R_\mu)=0$, then
$\ker L_\mu=\operatorname{span}\{\partial_\mu\rho_\mu\}$.  In that case
$\sigma=\lambda\partial_\mu\rho_\mu$; integrating and using
\eqref{sigma-zero-mass} gives
$0=\lambda M'(\mu)$, hence again $\sigma=0$.

It remains to show that $B_\mu A_\mu w=0$ forces $w=0$.  Write
$w(x)=f(r)e_r$.  Then
\[
 0=\operatorname{div}(\rho_\mu w)
 =\frac1{r^2}\frac d{dr}\bigl(r^2\rho_\mu f\bigr),
\]
so $r^2\rho_\mu(r)f(r)=C$.  If $C\ne0$, then
\[
 \|w\|_{Y_\mu}^2
 =4\pi C^2\int_0^{R_\mu}\frac{dr}{\rho_\mu(r)r^2}=+\infty,
\]
already because $\rho_\mu(0)>0$ and $r^{-2}$ is not integrable at the
origin.  Thus $C=0$ and $w=0$, proving the kernel assertion.

For the index identity, the map
\[
 T:=B_\mu A_\mu:Z_\mu\longrightarrow X_\mu
\]
is injective and bounded below by Lemma~\ref{X Y_mu less than BAX X_mu};
therefore its range is closed.  Moreover,
\[
 [\widetilde{\mathbb L}_\mu w,w]
 =\langle L_\mu Tw,Tw\rangle.
\]
The map $T$ consequently gives a dimension-preserving correspondence
between negative subspaces of $\widetilde{\mathbb L}_\mu$ and negative
subspaces of $L_\mu$ restricted to $R(T)$, which proves
\eqref{negative-index-displacement-hessian}.  The turning-point index theorem
of \cite[Section~3.5]{separable} states that this restricted index is
positive whenever $n^u(\mu)>0$.  The finiteness and discreteness of the
negative spectrum now follow from Lemma~\ref{Spectrum of mathbbL_mu}.
\end{proof}

\begin{lemma}[Absolute-form norm]\label{absolute-form-norm}
Assume $M'(\mu)\ne0$.  The form domain of
$|\widetilde{\mathbb L}_\mu|^{1/2}$ is $Z_\mu$, and there exist constants
$c,C>0$ such that
\begin{equation}\label{absolute-form-equivalence}
 c\|f\|_{Z_\mu}^2
 \le [|\widetilde{\mathbb L}_\mu|f,f]
 \le C\|f\|_{Z_\mu}^2,
 \qquad f\in Z_\mu.
\end{equation}
If
$f=\sum_{i=1}^n a_iw_i+f_+$ is the decomposition into the negative and
positive spectral subspaces, then
\begin{equation}\label{absolute-form-spectral-expansion}
 [|\widetilde{\mathbb L}_\mu|f,f]
 =\sum_{i=1}^n\mu_i|a_i|^2
  +[\widetilde{\mathbb L}_\mu f_+,f_+].
\end{equation}
\end{lemma}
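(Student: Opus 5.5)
Since $\widetilde{\mathbb L}_\mu$ is self-adjoint on $(Y_\mu,[\cdot,\cdot])$ with purely discrete spectrum and an orthonormal eigenbasis (Lemma~\ref{Spectrum of mathbbL_mu}), and since $0$ is not an eigenvalue when $M'(\mu)\ne0$ (Lemma~\ref{spectrum of tilde mathbb L mu}), I will define $|\widetilde{\mathbb L}_\mu|$ through the functional calculus. Its form is $[|\widetilde{\mathbb L}_\mu|f,f]=\sum_k|\lambda_k||[f,e_k]|^2$ in the eigenbasis. Splitting the sum into the finitely many negative eigenvalues $-\mu_i$ and the positive ones immediately gives \eqref{absolute-form-spectral-expansion}. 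Discreteness and the absence of the zero eigenvalue give a gap $\lambda_*:=\min_k|\lambda_k|>0$. Hence $[|\widetilde{\mathbb L}_\mu|f,f]\ge\lambda_*\|f\|_{Y_\mu}^2$.

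For the upper bound, I will first work with $f$ in the domain of $\widetilde{\mathbb L}_\mu$, or in $Z_\mu$ via the quadratic form. By \eqref{absolute-form-spectral-expansion} and the identity $[\widetilde{\mathbb L}_\mu g,g]=\langle L_\mu B_\mu A_\mu g,B_\mu A_\mu g\rangle$, we have
\[
[|\widetilde{\mathbb L}_\mu|f,f]=[\widetilde{\mathbb L}_\mu f,f]+2\sum_i\mu_i|a_i|^2
\le \|L_\mu\|\,\|B_\mu A_\mu f\|_{X_\mu}^2+2\mu_n\|f\|_{Y_\mu}^2 .
\]
Boundedness of $L_\mu$ on $X_\mu$ comes from Lemma~\ref{spectrum of L_mu}. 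The right-hand side is $\le C\|f\|_{Z_\mu}^2$ by definition of the $Z_\mu$ norm. Here the cross terms vanish because the $w_i$ are eigenvectors, so $[\widetilde{\mathbb L}_\mu f,f]=-\sum\mu_i a_i^2+[\widetilde{\mathbb L}_\mu f_+,f_+]$.

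The lower bound is the main step. Using $\mathbb L_\mu=I-K$ with $K=\frac{4\pi}{\Phi''(\rho_\mu)}(-\Delta)^{-1}$ compact on $X_\mu$ (Lemma~\ref{spectrum of L_mu}), I write
\[
[\widetilde{\mathbb L}_\mu f,f]=\|B_\mu A_\mu f\|_{X_\mu}^2-\langle KB_\mu A_\mu f,B_\mu A_\mu f\rangle_{X_\mu}.
\]
The second term is bounded by $C\|f\|_{Y_\mu}^2$. Indeed, it equals $\int|\nabla\phi|^2$ up to constants with $\Delta\phi=4\pi\operatorname{div}(\rho_\mu f)$, so Lemma~\ref{Newtonian-potential-bound} applies. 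This is the Gårding-type inequality $[\widetilde{\mathbb L}_\mu f,f]\ge\|B_\mu A_\mu f\|_{X_\mu}^2-C_1\|f\|_{Y_\mu}^2$. Adding $2\sum\mu_i a_i^2\ge0$ gives
\[
[|\widetilde{\mathbb L}_\mu|f,f]\ge\|B_\mu A_\mu f\|_{X_\mu}^2-C_1\|f\|_{Y_\mu}^2 .
\]
Combining this with $[|\widetilde{\mathbb L}_\mu|f,f]\ge\lambda_*\|f\|_{Y_\mu}^2$, the convex combination with weight $\theta=\lambda_*/(\lambda_*+C_1)$ yields $[|\widetilde{\mathbb L}_\mu|f,f]\ge c(\|B_\mu A_\mu f\|_{X_\mu}^2+\|f\|_{Y_\mu}^2)=c\|f\|_{Z_\mu}^2$.

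Finally, I will identify the form domain. The closed form of $\widetilde{\mathbb L}_\mu$ has domain $Z_\mu$, as the form $g\mapsto\langle L_\mu B_\mu A_\mu g,B_\mu A_\mu g\rangle$ is closed on $Z_\mu$ by the equivalence just shown, and $|\widetilde{\mathbb L}_\mu|$ differs from $\widetilde{\mathbb L}_\mu$ by the bounded finite-rank operator $2\sum\mu_i[\cdot,w_i]w_i$. Therefore $D(|\widetilde{\mathbb L}_\mu|^{1/2})$ equals that form domain, namely $Z_\mu$. The two-sided estimate, proven on the dense operator domain, extends to $Z_\mu$ by density. I expect the only delicate point to be this identification, together with ensuring that the Gårding constant uses only $\|f\|_{Y_\mu}$. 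The latter is exactly what Lemma~\ref{Newtonian-potential-bound} supplies.
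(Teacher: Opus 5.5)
Your proposal is correct and follows essentially the same route as the paper: the spectral gap gives $\|f\|_{Y_\mu}^2\lesssim[|\widetilde{\mathbb L}_\mu|f,f]$, Lemma~\ref{Newtonian-potential-bound} bounds the Newtonian term by $C\|f\|_{Y_\mu}^2$ so as to control $\|B_\mu A_\mu f\|_{X_\mu}^2$, and the identity $[|\widetilde{\mathbb L}_\mu|f,f]=[\widetilde{\mathbb L}_\mu f,f]+2\sum_i\mu_i|a_i|^2$ gives the upper bound. One small slip: with exactly $\theta=\lambda_*/(\lambda_*+C_1)$ the $\|f\|_{Y_\mu}^2$ coefficient cancels, leaving only $\theta\|B_\mu A_\mu f\|_{X_\mu}^2$, so you should add $[|\widetilde{\mathbb L}_\mu|f,f]\ge\lambda_*\|f\|_{Y_\mu}^2$ once more (or take a slightly smaller $\theta$) to recover the full $Z_\mu$ norm.
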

\begin{proof}
The closed quadratic form associated with $\widetilde{\mathbb L}_\mu$ is
\[
 [\widetilde{\mathbb L}_\mu f,f]
 =\|B_\mu A_\mu f\|_{X_\mu}^2
  -\frac1{4\pi}\|\nabla\phi_f\|_{L^2(\mathbb R^3)}^2,
 \qquad \Delta\phi_f=4\pi\operatorname{div}(\rho_\mu f),
\]
with form domain $Z_\mu$.  Lemmas~\ref{Newtonian-potential-bound} and
\ref{X Y_mu less than BAX X_mu} show that the Newtonian term is form-bounded
and that the form domain is exactly $Z_\mu$.  Since
$0\notin\sigma(\widetilde{\mathbb L}_\mu)$, the spectral gap gives
\[
 \|f\|_{Y_\mu}^2\le C[|\widetilde{\mathbb L}_\mu|f,f].
\]
Consequently,
\[
 \begin{aligned}
 \|B_\mu A_\mu f\|_{X_\mu}^2
 &\le |[\widetilde{\mathbb L}_\mu f,f]|
   +C\|f\|_{Y_\mu}^2
 \le C[|\widetilde{\mathbb L}_\mu|f,f],\\
 [|\widetilde{\mathbb L}_\mu|f,f]
 &=[\widetilde{\mathbb L}_\mu f,f]
   +2\sum_{i=1}^n\mu_i|[f,w_i]|^2
 \le C\|f\|_{Z_\mu}^2.
 \end{aligned}
\]
The first line, together with the $Y_\mu$ estimate, proves the lower bound in
\eqref{absolute-form-equivalence}; the second proves the upper bound.  Formula
\eqref{absolute-form-spectral-expansion} is the spectral theorem.
\end{proof}

\subsection{Proof of Theorem~\ref{Main thm}}
\begin{proof}
Write $\widetilde X=X-\mathrm{id}$.  We argue by contradiction and assume on the interval under consideration that
\begin{equation}\label{lower-energy-bootstrap}
 \mathfrak d(X(t))<\delta.
\end{equation}
Fix $0<\delta\le\delta_0$.  By Lemmas~\ref{Spectrum of mathbbL_mu} and \ref{spectrum of tilde mathbb L mu},
\[
 \sigma(\widetilde{\mathbb L}_\mu)
 =\{-\mu_n,\ldots,-\mu_1\}\cup\{\lambda_j:j\ge1\},
 \qquad \lambda_j\ge\lambda_1>0.
\]
Let $w_i$ be orthonormal eigenfunctions corresponding to the negative eigenvalues.  Set
\[
 a_i(t)=[\widetilde X(t),w_i],
 \qquad b_i(t)=\langle U(t),w_i\rangle_{L^2},
 \qquad
 V(t)=\sum_{i=1}^na_i(t)b_i(t).
\]
Since $X_t=\rho_\mu^{-1}U$, one has $a_i'=b_i$.  The Hamiltonian equation and the definition \eqref{projected-remainder-definition} give the identity
\begin{equation}\label{corrected-dot-V}
 \begin{aligned}
 V'(t)
 &=\sum_{i=1}^nb_i^2
   -\sum_{i=1}^na_i\left\langle\frac{\delta H}{\delta X}(X),w_i\right\rangle\\
 &=\sum_{i=1}^nb_i^2+\sum_{i=1}^n\mu_i a_i^2+\mathcal R_-(X).
 \end{aligned}
\end{equation}
In particular, the quadratic unstable contribution is not part of the nonlinear remainder.

Fix $0<\alpha<1/2$, to be chosen as a function of $\delta$, and decompose
\[
 V'=V_1+V_2,
\]
where
\begin{align*}
 V_1&:=\sum_i b_i^2+(1-\alpha)\sum_i\mu_i a_i^2,\\
 V_2&:=\alpha\sum_i\mu_i a_i^2+\mathcal R_-(X).
\end{align*}
Cauchy's inequality gives
\begin{equation}\label{V1-lower-bound}
 V_1\ge2\sqrt{(1-\alpha)\mu_1}\,|V|.
\end{equation}

We next use the lower-energy condition to control $V_2$.  Write
\[
 \widetilde X=\sum_{i=1}^na_iw_i+X_+,
 \qquad
 Q_+(X):=[\widetilde{\mathbb L}_\mu X_+,X_+]\ge0.
\]
By Lemma~\ref{absolute-form-norm},
\begin{equation}\label{spectral-form-norm-equivalence}
 c\|\widetilde X\|_{Z_\mu}^2
 \le \sum_{i=1}^n\mu_i a_i^2+Q_+(X)
 \le C\|\widetilde X\|_{Z_\mu}^2.
\end{equation}
Under \eqref{lower-energy-bootstrap}, Corollaries~\ref{Estimation of F} and \ref{Estimation of F_1} yield
\begin{align}
 |\mathcal R_-(X)|&\le C\omega(\delta)\|\widetilde X\|_{Z_\mu}^2,\label{Rminus-small}\\
 \left|F_1(X)-\frac12[\widetilde{\mathbb L}_\mu\widetilde X,\widetilde X]\right|
 &\le C\omega(\delta)\|\widetilde X\|_{Z_\mu}^2.\label{F1-small}
\end{align}
Here and below, $C$ is independent of $\delta$.  Since
\[
 H_0=\frac12\|U\|_{Y_\mu^*}^2+F_1(X)<0,
\]
we have $F_1(X)<0$, and hence \eqref{F1-small} gives
\begin{equation}\label{positive-part-controlled}
 Q_+(X)
 \le \sum_i\mu_i a_i^2+C\omega(\delta)\|\widetilde X\|_{Z_\mu}^2.
\end{equation}
Combining \eqref{spectral-form-norm-equivalence} and \eqref{positive-part-controlled}, and then taking $\delta$ sufficiently small, gives
\begin{equation}\label{negative-part-controls-Z}
 \|\widetilde X\|_{Z_\mu}^2\le C\sum_i\mu_i a_i^2.
\end{equation}
Let $C_0$ be the constant in \eqref{negative-part-controls-Z} and let $C_1$ be the constant in \eqref{Rminus-small}.  Set
\begin{equation}\label{alpha-choice}
 \alpha=4C_0C_1\omega(\delta).
\end{equation}
Shrinking $\delta$ once more ensures $0<\alpha<1/2$, and \eqref{Rminus-small} then implies
\begin{equation}\label{V2-coercive}
 V_2\ge c\alpha\|\widetilde X\|_{Z_\mu}^2.
\end{equation}
Consequently,
\begin{equation}\label{V-master-inequality}
 V'\ge a_\delta|V|+c\alpha\|\widetilde X\|_{Z_\mu}^2,
 \qquad
 a_\delta:=2\sqrt{(1-\alpha)\mu_1}.
\end{equation}

The conserved negative Hamiltonian also gives
\begin{equation}\label{H-controlled-by-X}
 |H_0|\le C\|\widetilde X(t)\|_{Z_\mu}^2,
 \qquad
 \|U(t)\|_{Y_\mu^*}^2\le C\|\widetilde X(t)\|_{Z_\mu}^2.
\end{equation}
Indeed, the first inequality follows from \eqref{F1-small} and boundedness of the quadratic form, while the second follows from
$\frac12\|U\|_{Y_\mu^*}^2<-F_1(X)$.  Thus
\begin{equation}\label{V-forcing-by-energy}
 V'\ge a_\delta|V|+c\alpha|H_0|.
\end{equation}
Finally, the two-sided vacuum asymptotics and
$\|\widetilde X\|_{W^{1,\infty}}<\delta$ give
\[
 \|\widetilde X\|_{Z_\mu}^2
 \le C\delta^2\int_{S_\mu}
 \bigl[\rho_\mu+\Phi''(\rho_\mu)
 (|\nabla\rho_\mu|^2+\rho_\mu^2)\bigr]\,dx
 \le C\delta^2.
\]
Together with \eqref{H-controlled-by-X}, this implies
\begin{equation}\label{V-bootstrap-upper}
 \|\widetilde X(t)\|_{Z_\mu}\le C\delta,
 \qquad
 |V(t)|\le C\delta^2.
\end{equation}

Define
\begin{equation}\label{explicit-h-definition}
 h(\delta):=\frac1{\sqrt{1-\alpha(\delta)}}.
\end{equation}
By \eqref{alpha-choice}, $h(\delta)\to1$ as $\delta\to0+$ and
$a_\delta=2\sqrt{\mu_1}/h(\delta)$.  In the proof of
Lemma~\ref{Estimation of dH/dX-Lmu} we take
$\omega(\delta)=C(\omega_P(\delta)+\delta)$; hence
$\alpha(\delta)\ge c\delta$ after adjusting the constants.  In particular,
$\delta^2/\alpha(\delta)\le C\delta$, which is used in the logarithmic
time bounds below.

If $V(0)\ge0$, then \eqref{V-forcing-by-energy} implies
\[
 V(t)\ge \frac{c\alpha|H_0|}{a_\delta}
 \left(e^{a_\delta t}-1\right).
\]
Since
\[
 \frac d{dt}\sum_i a_i(t)^2=2V(t),
\]
integration gives
\begin{equation}\label{Y-growth-proof-formula}
 \|\widetilde X(t)\|_{Y_\mu}^2
 \ge C\alpha|H_0|
 \left(e^{a_\delta t}-1-a_\delta t\right),
 \qquad 0\le t\le T_\delta^*.
\end{equation}
For each fixed $\delta$, the factor $\alpha/a_\delta^2$ is absorbed into the constant $C_\delta$ in \eqref{growth of Y_mu norm}.  Comparing the corresponding lower bound for $V(t)$ with \eqref{V-bootstrap-upper} shows that the bootstrap cannot persist beyond
\[
 \frac{h(\delta)}{2\sqrt{\mu_1}}
 \log\!\left(\frac{C\delta}{|H_0|}\right).
\]

If $V(0)<0$, then while $V<0$, \eqref{V-forcing-by-energy} gives
\[
 V'+a_\delta V\ge c\alpha|H_0|.
\]
Using $V(0)\ge-C\delta^2$, one sees that either the bootstrap fails or $V$ reaches zero no later than
\[
 t_0\le\frac{h(\delta)}{2\sqrt{\mu_1}}
 \log\!\left(\frac{C\delta}{|H_0|}\right).
\]
Starting at that time and applying the preceding $V\ge0$ argument requires at most the same additional time.  Hence in both cases the bootstrap fails by
\[
 T\le\frac{h(\delta)}{\sqrt{\mu_1}}
 \log\!\left(\frac{D\delta}{|H_0|}\right).
\]
Since $0<\delta\le\delta_0$ was arbitrary, this proves \eqref{conditional-lower-energy-escape}.  The explicit norm bound is asserted only in the case $V(0)\ge0$, as stated in the theorem.
\end{proof}

\subsection{Proof of Theorem~\ref{Main result when energy is higher}}
We first record the correct spectral decomposition of the linearized Hamiltonian flow.  No orthogonality of the growing and decaying eigenvectors is asserted.

\begin{lemma}[Riesz projections and exponential trichotomy]\label{linear-trichotomy-lemma}
Let $\mathcal A$ be defined by \eqref{linearized-generator-definition}, and assume $M'(\mu)\ne0$.  Then $\mathcal A$ generates a strongly continuous group on $\vec Z_\mu$.  There is a bounded invariant decomposition
\begin{equation}\label{trichotomy-decomposition}
 \vec Z_\mu=E^u\oplus E^c\oplus E^s,
 \qquad P_u+P_c+P_s=I,
\end{equation}
where $P_u$ is the Riesz projection \eqref{Riesz-projection-definition}, $\dim E^u=\dim E^s=n$, and the spectrum on $E^c$ is purely imaginary.  There is an equivalent Hilbert norm $\|\cdot\|_*$ on $\vec Z_\mu$ such that, with $\lambda_u:=\sqrt{\mu_1}$,
\begin{align}
 \|e^{t\mathcal A}z_u\|_*&\ge e^{\lambda_u t}\|z_u\|_* &&(t\ge0,\,z_u\in E^u),\label{unstable-trichotomy-estimate}\\
 \|e^{t\mathcal A}z_s\|_*&\le e^{-\lambda_u t}\|z_s\|_* &&(t\ge0,\,z_s\in E^s),\label{stable-trichotomy-estimate}\\
 \|e^{t\mathcal A}z_c\|_*&=\|z_c\|_* &&(t\in\mathbb R,\,z_c\in E^c).\label{center-trichotomy-estimate}
\end{align}
\end{lemma}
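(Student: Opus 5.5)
Since $\mathcal A$ is a linear Hamiltonian operator $JL$ whose energy form has finitely many negative directions, I would prove Lemma~\ref{linear-trichotomy-lemma} with the standard index theory for such operators on the energy space.

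\emph{Step 1: the energy form and the group.} On $\vec Z_\mu$ take the quadratic form
\[
 \mathcal Q(\widetilde X,U):=\tfrac12\|U\|_{Y_\mu^*}^2+\tfrac12[\widetilde{\mathbb L}_\mu\widetilde X,\widetilde X].
\]
It is conserved by the linear flow. By Lemma~\ref{absolute-form-norm}, the absolute form
\[
 \mathcal Q_{\rm abs}:=\tfrac12\|U\|^2+\tfrac12[|\widetilde{\mathbb L}_\mu|\widetilde X,\widetilde X]
\]
is an equivalent Hilbert norm on $\vec Z_\mu$. Moreover $\mathcal Q=\mathcal Q_{\rm abs}-\sum_i\mu_i a_i^2$ differs from it by a finite-rank bounded perturbation. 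To obtain the group, I would first solve with $|\widetilde{\mathbb L}_\mu|$ in place of $\widetilde{\mathbb L}_\mu$. That operator $\mathcal A_{\rm abs}$ is skew-adjoint in the $\mathcal Q_{\rm abs}$ inner product, so Stone's theorem gives a unitary group. Then $\mathcal A=\mathcal A_{\rm abs}+\mathcal B$ with $\mathcal B(\widetilde X,U)=(0,-2\rho_\mu\sum_i\mu_i a_iw_i)$, which is bounded and of finite rank. The bounded perturbation theorem therefore gives a $C_0$ group $e^{t\mathcal A}$.

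\emph{Step 2: the spectrum.} Since $\mathcal B$ is compact and the spectrum of $\mathcal A_{\rm abs}$ is imaginary, Weyl-type arguments show that $\sigma(\mathcal A)\setminus i\mathbb R$ consists of isolated eigenvalues of finite multiplicity. Suppose $\mathcal A(\widetilde X,U)=\lambda(\widetilde X,U)$ with $\operatorname{Re}\lambda\neq0$. Then $U=\lambda\rho_\mu\widetilde X$, and $\widetilde X$ solves $\widetilde{\mathbb L}_\mu\widetilde X=-\lambda^2\widetilde X$. Because $\widetilde{\mathbb L}_\mu$ is self-adjoint, $\lambda^2$ is real, so $\lambda=\pm\sqrt{\mu_i}$ with eigenvectors $(w_i,\pm\sqrt{\mu_i}\rho_\mu w_i)$. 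Hence the hyperbolic spectrum is exactly $\{\pm\sqrt{\mu_i}\}$, and $\dim E^u=\dim E^s=n$. Define $P_u$ by \eqref{Riesz-projection-definition} and $P_s$ analogously, and set $P_c=I-P_u-P_s$. These projections commute with the group, which gives \eqref{trichotomy-decomposition}. On $E^u\oplus E^s=\operatorname{span}\{w_i\}\times\operatorname{span}\{\rho_\mu w_i\}$ the dynamics decouple into $n$ saddles.

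\emph{Step 3: the adapted norm.} On $E^u$ write $z_u=\sum_i c_i(w_i,\sqrt{\mu_i}\rho_\mu w_i)$ and set $\|z_u\|_*^2:=\sum_i|c_i|^2$. Then $e^{t\mathcal A}$ multiplies $c_i$ by $e^{\sqrt{\mu_i}t}\ge e^{\lambda_u t}$, which gives \eqref{unstable-trichotomy-estimate}; \eqref{stable-trichotomy-estimate} follows the same way on $E^s$. On $E^c$ I would show that $\mathcal Q$ is positive definite and equivalent to the norm. Every $z\in E^u\oplus E^s$ is $\mathcal Q$-orthogonal to $E^c$, because the symplectic form pairs $E^c$ trivially with the hyperbolic eigenspaces and $\mathcal Q(z,z')=\omega(z,\mathcal A z')$. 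Meanwhile $E^u\oplus E^s$ is an $n^-(\mathcal Q)\cdot 2$-dimensional space on which $\mathcal Q$ has signature $(n,n)$. Since $n^-(\mathcal Q)=n$ and $E^c$ is $\mathcal Q$-orthogonal to $E^u\oplus E^s$, the form is nonnegative on $E^c$. Nondegeneracy and coercivity on $E^c$ then follow from $0\notin\sigma(\widetilde{\mathbb L}_\mu)$ together with the compact-perturbation structure. Set $\|z_c\|_*^2:=\mathcal Q(z_c)$, which is conserved, giving \eqref{center-trichotomy-estimate}. Finally define $\|z\|_*^2:=\|P_uz\|_*^2+\|P_cz\|_*^2+\|P_sz\|_*^2$; this norm is equivalent to the original one because the projections are bounded.

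\emph{Main obstacle.} The hard step is proving that $\mathcal Q|_{E^c}$ is coercive, not merely nonnegative. A degenerate (null) direction would mean a zero or imaginary Jordan chain at which $\mathcal Q$ vanishes. I would rule this out as follows. Write $\mathcal Q=\mathcal Q_{\rm abs}-K$ with $K$ of finite rank. If coercivity failed, a minimizing sequence would converge weakly. Either it yields a null vector of $\mathcal Q$ in $E^c$, which is excluded by the nondegeneracy of $\widetilde{\mathbb L}_\mu$ (nonzero kernel) and by $\mathcal Q$-orthogonality to $E^u\oplus E^s$. Or it tends weakly to zero, in which case $K$ vanishes in the limit, forcing $\mathcal Q_{\rm abs}\to0$, a contradiction.
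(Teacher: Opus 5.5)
Your argument is correct, but it takes a more abstract route than the paper. The paper does not perturb from $|\widetilde{\mathbb L}_\mu|$ and does not use Fredholm theory. After identifying $Y_\mu^*$ with $Y_\mu$ via $g\mapsto\rho_\mu^{-1}g$, it uses the $[\cdot,\cdot]$-orthogonal spectral splitting $Y_\mu=Y_-\oplus Y_+$ of $\widetilde{\mathbb L}_\mu$. Both pieces are invariant for $\begin{pmatrix}0&I\\-\widetilde{\mathbb L}_\mu&0\end{pmatrix}$, so the linear flow is literally a direct sum of $n$ explicit saddles on $Y_-\times\rho_\mu Y_-$ and a conservative wave equation on $(Y_+\cap Z_\mu)\times\rho_\mu Y_+$; the latter is unitary after conjugation by $L_+^{1/2}$. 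This decoupling yields at once:
\begin{itemize}
\item the group;
\item the explicit formulas $P_uz=\sum_ic_i^+(z)e_i^+$ and $P_sz=\sum_ic_i^-(z)e_i^-$, hence semisimplicity of $\pm\sqrt{\mu_i}$ and $\dim E^u=\dim E^s=n$;
\item the concrete form of $E^c$;
\item coercivity of the center energy, directly from the gap $\lambda_1>0$ and Lemma~\ref{absolute-form-norm}.
\end{itemize}

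Your route is the general Lin--Zeng index machinery: a finite-rank bounded perturbation for the group, analytic Fredholm theory off $i\mathbb R$, symplectic orthogonality plus the count $n^-(\mathcal Q)=n$ for nonnegativity on $E^c$, and weak compactness for coercivity. It would survive in non-separable $JL$ settings where no decoupling exists. The cost is several facts you only cite: that $P_u$ and $P_s$ are symplectic adjoints of each other, and the real structure of the Riesz projections.

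Your ``main obstacle'' actually disappears inside your own framework. $E^c$ is $\mathcal Q$-orthogonal to $E^u\oplus E^s=Y_-\times\rho_\mu Y_-$ and complementary to it, and $\mathcal Q$ is nondegenerate on that finite-dimensional space. So $E^c$ must equal the full $\mathcal Q$-orthogonal complement. Since $[\widetilde{\mathbb L}_\mu f,w_i]=-\mu_i[f,w_i]$, that complement is exactly $(Y_+\cap Z_\mu)\times\rho_\mu Y_+$, where coercivity is immediate. The compactness argument is valid but unnecessary.

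A few small slips:
\begin{itemize}
\item The perturbation is $\mathcal B(\widetilde X,U)=(0,+2\rho_\mu\sum_i\mu_ia_iw_i)$, not with a minus sign.
\item In the coercivity step you mean the \emph{trivial} kernel of $\widetilde{\mathbb L}_\mu$.
\item $\dim E^u=n$ needs the absence of Jordan chains, which you assert but do not show. It takes one line: a generalized eigenvector at $\lambda=\pm\sqrt{\mu_i}$ would satisfy $(\widetilde{\mathbb L}_\mu+\lambda^2)f=-2\lambda w$, and pairing with $w$ gives $0=-2\lambda[w,w]$.
\end{itemize}
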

\begin{proof}
Use the Riesz isometry $Y_\mu^*\ni g\mapsto \rho_\mu^{-1}g\in
Y_\mu$ and write the phase variable as $(f,v)$, where $v=\rho_\mu^{-1}g$.
In these coordinates the generator is the standard wave operator
\[
 \mathcal G=\begin{pmatrix}0&I\\-\widetilde{\mathbb L}_\mu&0\end{pmatrix}
 \quad\hbox{on } Z_\mu\times Y_\mu,
 \qquad
 D(\mathcal G)=D(\widetilde{\mathbb L}_\mu)\times Z_\mu.
\]
Because $\widetilde{\mathbb L}_\mu$ is self-adjoint and its form domain is
$Z_\mu$, the spectral theorem decomposes $Y_\mu$ into its finite-dimensional
negative spectral space and its positive spectral space.  On every negative
eigenblock the wave equation is a two-dimensional hyperbolic system, while on
the positive spectral space it is the standard conservative wave equation.
The direct sum of these evolutions defines a strongly continuous group on
$Z_\mu\times Y_\mu$.  Conjugating by the Riesz isometry gives the group
generated by $\mathcal A$ on $\vec Z_\mu$.  This is also the
separable-Hamiltonian trichotomy of
\cite{lin_2021_instability,separable}.

Let $\widetilde{\mathbb L}_\mu w_i=-\mu_iw_i$.  On the corresponding
two-dimensional hyperbolic block, write
\[
 e_i^\pm:=\binom{w_i}{\pm\sqrt{\mu_i}\rho_\mu w_i},
 \qquad
 c_i^\pm(f,g):=\frac12\left([f,w_i]
 \pm\frac{\langle g,w_i\rangle}{\sqrt{\mu_i}}\right).
\]
After complexifying the phase space in order to define the contour integral,
and then restricting the conjugation-invariant projections to the real phase
space, one has
\[
 P_uz=\sum_i c_i^+(z)e_i^+,
 \qquad
 P_sz=\sum_i c_i^-(z)e_i^-.
\]
These are precisely the finite-rank Riesz projections onto the positive and
negative real spectrum.  In particular, they are bounded.  No orthogonality
between $e_i^+$ and $e_i^-$ is required.

On the positive spectral subspace of $\widetilde{\mathbb L}_\mu$, define
\[
 \|(f,g)\|_c^2
 :=[\widetilde{\mathbb L}_\mu f,f]+\|g\|_{Y_\mu^*}^2.
\]
Lemma~\ref{absolute-form-norm} and the positive spectral gap show that this is
equivalent to the restriction of the $\vec Z_\mu$ norm.  Set
\[
 \|z\|_*^2:=\sum_{i=1}^n
 \bigl(|c_i^+(z)|^2+|c_i^-(z)|^2\bigr)+\|P_cz\|_c^2.
\]
This is an equivalent Hilbert norm on $\vec Z_\mu$.  The hyperbolic
coordinates satisfy
\[
 (c_i^+)'=\sqrt{\mu_i}\,c_i^+,
 \qquad
 (c_i^-)'=-\sqrt{\mu_i}\,c_i^-.
\]
On the center space, write $L_+:=\widetilde{\mathbb L}_\mu|_{Y_+}>0$.
The change of variables $(f,v)\mapsto(L_+^{1/2}f,v)$ conjugates the center
generator to
\[
 \begin{pmatrix}0&L_+^{1/2}\\-L_+^{1/2}&0\end{pmatrix},
\]
which is skew-adjoint on $Y_\mu\times Y_\mu$.  Hence its group is unitary.
Since $g=\rho_\mu v$ and the Riesz map is an isometry,
$\|g\|_{Y_\mu^*}=\|v\|_{Y_\mu}$; consequently the center flow preserves
$[L_+f,f]+\|g\|_{Y_\mu^*}^2=\|(f,g)\|_c^2$.
The estimates
\eqref{unstable-trichotomy-estimate}--\eqref{center-trichotomy-estimate}
follow.
\end{proof}

We also need a nonlinear robustness estimate for the hyperbolic splitting.  The key point is that Lemma~\ref{Estimation of dH/dX-Lmu} controls the forcing only as a form on $Z_\mu$.  We therefore estimate the unstable and stable coordinates directly and use conservation of the full Hamiltonian to control the center component.

\begin{lemma}[Nonlinear unstable cone]\label{nonlinear-invariant-cone-lemma}
Let $Z(t)=(\widetilde X(t),U(t))$ be a strong solution satisfying
\begin{equation}\label{projection-bootstrap}
 \mathfrak d(X(t))<\delta
\end{equation}
on $[0,T]$.  With the adapted norm in
Lemma~\ref{linear-trichotomy-lemma}, set
\[
 p(t):=\|P_uZ(t)\|_*,\qquad
 s(t):=\|P_sZ(t)\|_*,\qquad
 q(t):=\|P_cZ(t)\|_c .
\]
For a continuous scalar function $f$, write
\[
 D_-f(t):=\liminf_{h\downarrow0}\frac{f(t+h)-f(t)}h,
 \qquad
 D^+f(t):=\limsup_{h\downarrow0}\frac{f(t+h)-f(t)}h.
\]
There is a modulus $\eta(\delta)\to0$ as $\delta\to0+$ such that
\begin{align}
 D_-p&\ge \lambda_up-C\eta(\delta)(p+s+q),\label{p-Dini}\\
 D^+s&\le-\lambda_us+C\eta(\delta)(p+s+q),\label{s-Dini}
\end{align}
and
\begin{equation}\label{center-controlled-by-energy}
 q(t)^2\le C\Bigl(\|Z(0)\|_*^2+p(t)s(t)
       +\eta(\delta)\bigl(p(t)^2+s(t)^2\bigr)\Bigr).
\end{equation}
Consequently, let $0<\kappa\le1$.  If
\begin{equation}\label{cone-initial-data-star}
 p(0)\ge \kappa\|Z(0)\|_*
\end{equation}
and
\begin{equation}\label{cone-smallness-condition}
 C\eta(\delta)\kappa^{-1}\le \frac{\lambda_u}{32},
\end{equation}
then, as long as \eqref{projection-bootstrap} holds,
\begin{equation}\label{cone-growth-conclusion}
 s(t)+q(t)\le C\kappa^{-1}p(t),
 \qquad
 p(t)\ge p(0)e^{\lambda_{\kappa,\delta}t},
\end{equation}
where
\begin{equation}\label{effective-cone-growth-rate}
 \lambda_{\kappa,\delta}
 :=\lambda_u-C_\kappa\eta(\delta)>0,
 \qquad
 C_\kappa\le C(1+\kappa^{-1}).
\end{equation}
In particular, for fixed $\kappa$,
$\lambda_{\kappa,\delta}\to\lambda_u$ as $\delta\to0+$.
\end{lemma}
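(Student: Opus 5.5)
The plan has three steps: differential inequalities for the hyperbolic coordinates, energy control of the center component, and a continuity argument on the cone.

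\emph{Hyperbolic coordinates.} I will work directly with the coordinates $c_i^\pm(t)=c_i^\pm(Z(t))$. By (S2) with $w=w_i$, each $c_i^\pm$ is absolutely continuous. Put $a_i=[\widetilde X,w_i]$ and $b_i=\langle U,w_i\rangle$. Then $a_i'=b_i$ and
\[
 b_i'=-\langle\delta H/\delta X(X),w_i\rangle=\mu_i a_i+r_i,
 \qquad r_i:=\langle\widetilde L_\mu\widetilde X-\delta H/\delta X(X),w_i\rangle .
\]
It follows that
\[
 (c_i^\pm)'=\pm\sqrt{\mu_i}\,c_i^\pm\pm\frac{r_i}{2\sqrt{\mu_i}} .
\]
Lemma~\ref{Estimation of dH/dX-Lmu} gives $|r_i|\le\omega(\delta)\|\widetilde X\|_{Z_\mu}$, using $\|w_i\|_{Z_\mu}<\infty$ and $\mathfrak d<\delta\le\delta_*$. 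The norm $\|\cdot\|_*$ is equivalent to the $\vec Z_\mu$ norm, so $\|\widetilde X\|_{Z_\mu}\le C(p+s+q)$.

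From $p^2=\sum_i|c_i^+|^2$ and $\mu_i\ge\mu_1$ I get $\tfrac12(p^2)'\ge\lambda_u p^2-C\omega(\delta)\,p\,(p+s+q)$, and symmetrically for $s^2$. Dividing by $p$ (when $p>0$) gives \eqref{p-Dini}. The case $p=0$ is handled through Dini derivatives of $\sqrt{\cdot}$, with the inequality holding trivially or via $\sqrt{p^2+\epsilon}$. The same argument gives \eqref{s-Dini} with $\eta=\omega$. Note that I never differentiate the center component, which is where a form-only remainder estimate would be insufficient.

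\emph{Center component.} For \eqref{center-controlled-by-energy} I will use conservation (S3) together with Corollary~\ref{Estimation of F_1}:
\[
 H_0=\tfrac12\|U\|^2+\tfrac12[\widetilde{\mathbb L}_\mu\widetilde X,\widetilde X]+O\big(\omega(\delta)\|\widetilde X\|_{Z_\mu}^2\big).
\]
The quadratic form decouples into blocks, because the positive and negative spectral subspaces are $[\cdot,\cdot]$-orthogonal, and likewise for the momentum components. On each hyperbolic block the quadratic energy equals $\tfrac12(b_i^2-\mu_ia_i^2)=-2\mu_i c_i^+c_i^-$, and on the center space it equals $\tfrac12 q^2$. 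This gives
\[
 \tfrac12 q^2=H_0+2\sum_i\mu_ic_i^+c_i^-+O\big(\omega(p+s+q)^2\big).
\]
Bounding $|H_0|\le C\|Z(0)\|_*^2$ at $t=0$ (again by Corollary~\ref{Estimation of F_1}) and $\sum_i|c_i^+c_i^-|\le ps$, then absorbing $\omega q^2$ for small $\delta$, yields \eqref{center-controlled-by-energy}.

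\emph{Cone argument (main obstacle).} The cone step is where I expect the real difficulty. The obstacle is that $q$ is not dynamically damped and is controlled only through the coupling $ps$ and the initial size. The plan is a continuity argument on the set where $s\le p$ and $q\le K\kappa^{-1}p$.

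At $t=0$ I have $s,q\le C\|Z(0)\|_*\le C\kappa^{-1}p(0)$, which is a slightly weaker statement than $s\le p$. So I will instead propagate $s\le A\kappa^{-1}p$ with a fixed large $A$. On this set, \eqref{center-controlled-by-energy} gives $q^2\le C(\kappa^{-2}p(0)^2+A\kappa^{-1}p^2)$, and hence $q\le C'\kappa^{-1}p$ as long as $p\ge p(0)$.

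Next, $D_-p\ge(\lambda_u-C\eta\kappa^{-1})p\ge\tfrac{31}{32}\lambda_u p$ by \eqref{cone-smallness-condition}, so $p$ increases and stays above $p(0)$. For $s$, I will compare the ratio $s/p$: on the boundary $s=A\kappa^{-1}p$ its upper Dini derivative is at most $(-2\lambda_u+C\eta\kappa^{-1}(1+A))\,s/p$ divided appropriately, which is negative. This requires choosing $A$ first and then taking $\delta$ small, so the constants in \eqref{cone-smallness-condition} must be allowed to depend on the choice of $A$.

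Closing the bootstrap gives $s+q\le C\kappa^{-1}p$. Substituting this into \eqref{p-Dini} gives $D_-p\ge(\lambda_u-C(1+\kappa^{-1})\eta)p$, and a Dini comparison lemma then yields \eqref{cone-growth-conclusion} and \eqref{effective-cone-growth-rate}.
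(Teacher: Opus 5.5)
Your proposal is correct and follows the paper's proof essentially step for step. It uses the same hyperbolic-coordinate equations for $c_i^\pm$ with the modal force bound from Lemma~\ref{Estimation of dH/dX-Lmu}, the same energy identity $H_0=\tfrac12 q^2-2\sum_i\mu_i c_i^+c_i^-+\mathcal R_E$ to control the center component, and the same bootstrap on $\{s\le A\kappa^{-1}p,\ p\ge p(0)\}$ with a smallness constant depending on $A$. Comparing $s/p$ rather than the paper's $s-Ap$ at the cone boundary is an inessential variant, and the $\eta(p^2+s^2)$ term you dropped from the $q$-bound is absorbed under \eqref{cone-smallness-condition}.
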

\begin{proof}
Write the negative-mode coordinates as in
Lemma~\ref{linear-trichotomy-lemma}.  If
\[
 r_i(t):=\left\langle
 \widetilde L_\mu\widetilde X(t)-\frac{\delta H}{\delta X}(X(t)),w_i
 \right\rangle,
\]
then the Hamiltonian equations give
\begin{equation}\label{hyperbolic-coordinate-equations}
 (c_i^+)'=\sqrt{\mu_i}\,c_i^++\frac{r_i}{2\sqrt{\mu_i}},
 \qquad
 (c_i^-)'=-\sqrt{\mu_i}\,c_i^- -\frac{r_i}{2\sqrt{\mu_i}}.
\end{equation}
Lemma~\ref{Estimation of dH/dX-Lmu}, finite dimensionality of the negative
space, and equivalence of the adapted and original norms imply
\begin{equation}\label{modal-force-bound}
 \left(\sum_i|r_i|^2\right)^{1/2}
 \le C\eta(\delta)\|\widetilde X\|_{Z_\mu}
 \le C\eta(\delta)(p+s+q).
\end{equation}
Equations \eqref{p-Dini}--\eqref{s-Dini} follow from
\eqref{hyperbolic-coordinate-equations} by the standard one-sided
Dini-derivative inequalities for Euclidean norms.  This formulation also
covers instants at which $p$ or $s$ vanishes.

It remains to control the center component without estimating the nonlinear
force in the whole phase-space norm.  Decompose
\[
 \widetilde X=\sum_i a_iw_i+X_c,
 \qquad
 U=\sum_i b_i\rho_\mu w_i+U_c,
\]
where $X_c$ belongs to the positive spectral subspace.  By the definitions
of $c_i^\pm$,
\[
 \frac12\bigl(b_i^2-\mu_i a_i^2\bigr)
 =-2\mu_i c_i^+c_i^-.
\]
Corollary~\ref{Estimation of F_1} and conservation of the Hamiltonian give
\begin{equation}\label{energy-hyperbolic-decomposition}
 H(Z(0))
 =\frac12q(t)^2-2\sum_i\mu_i c_i^+(t)c_i^-(t)+\mathcal R_E(t),
 \qquad
 |\mathcal R_E(t)|\le\eta(\delta)\|\widetilde X(t)\|_{Z_\mu}^2.
\end{equation}
For sufficiently small initial data, the same expansion at $t=0$ yields
$|H(Z(0))|\le C\|Z(0)\|_*^2$.  Since the negative space is finite
dimensional and the adapted norm is equivalent to the original phase norm,
\[
 \|\widetilde X\|_{Z_\mu}^2\le C(p^2+s^2+q^2),
 \qquad
 \left|\sum_i\mu_i c_i^+c_i^-\right|\le Cps.
\]
Substitution into \eqref{energy-hyperbolic-decomposition} gives
\[
 q^2\le C\bigl(\|Z(0)\|_*^2+ps
 +\eta(\delta)(p^2+s^2+q^2)\bigr).
\]
After shrinking $\delta$ so that the last $q^2$ term is absorbed, this is
\eqref{center-controlled-by-energy}.

Choose $A=A_0\kappa^{-1}$ with $A_0$ large enough that
$s(0)\le Ap(0)$.  Suppose, on a maximal interval, that $s\le Ap$ and
$p\ge p(0)$.  From \eqref{center-controlled-by-energy} and
\eqref{cone-initial-data-star},
\begin{equation}\label{center-inside-cone}
 q\le C\kappa^{-1}p
\end{equation}
provided \eqref{cone-smallness-condition} holds.  Substitution into
\eqref{p-Dini} also gives $D_-p>0$ whenever $p=p(0)$, so the second boundary
of the bootstrap region cannot be crossed.  At the boundary $s=Ap$,
equations \eqref{p-Dini}, \eqref{s-Dini}, and
\eqref{center-inside-cone} give
\[
 D^+(s-Ap)
 \le-2\lambda_uAp+C\eta(\delta)\kappa^{-2}p<0.
\]
Indeed, the negative term has size $\lambda_u\kappa^{-1}p$, whereas the
error has size $\eta(\delta)\kappa^{-2}p$; condition
\eqref{cone-smallness-condition}, after enlarging its harmless constant,
makes the latter strictly smaller.  Thus the cone is invariant.

Within the invariant cone,
\eqref{p-Dini} and \eqref{center-inside-cone} yield the sharper estimate
\[
 D_-p\ge\bigl(\lambda_u-C_\kappa\eta(\delta)\bigr)p.
\]
After decreasing the admissible $\delta$ if necessary, the coefficient is
positive.  Integrating the Dini inequality proves
\eqref{cone-growth-conclusion}--\eqref{effective-cone-growth-rate}; the
bound for $s+q$ follows from $s\le Ap$ and
\eqref{center-inside-cone}.
\end{proof}

\begin{proof}[Proof of Theorem~\ref{Main result when energy is higher}]
Fix $\kappa\in(0,1]$.  By equivalence of the original and adapted norms,
there is a constant $c_*\in(0,1]$, independent of $\kappa$ and of the
initial data, such that
\[
 \|P_uZ\|_*\ge c_*\|P_uZ\|_{\vec Z_\mu},
 \qquad
 \|Z\|_{\vec Z_\mu}\ge c_*\|Z\|_*.
\]
Consequently, \eqref{unstable-cone-initial-condition} implies
\begin{equation}\label{adapted-initial-cone-condition}
 p(0)\ge c_*^2\kappa\|Z(0)\|_*.
\end{equation}
Set $\kappa_*:=c_*^2\kappa$.  Since $\eta(\delta)\to0$, choose
$\bar\delta_\kappa>0$ so that
\begin{equation}\label{kappa-dependent-bootstrap-smallness}
 C\eta(\delta)\kappa_*^{-1}\le\frac{\lambda_u}{32}
 \qquad\text{for }0<\delta\le\bar\delta_\kappa.
\end{equation}
Fix such a $\delta$.  Lemma~\ref{nonlinear-invariant-cone-lemma} applies
with cone aperture $\kappa_*$.

Assume, toward a contradiction, that
\eqref{conditional-projection-escape} fails.  Then
\eqref{projection-bootstrap} holds with this $\delta$ throughout the
interval.  Energy conservation and Corollary~\ref{Estimation of F_1} imply
\begin{equation}\label{phase-norm-bootstrap-bound}
 \|Z(t)\|_*\le C\delta.
\end{equation}
Indeed, for a radial displacement on the fixed ball, the definition of
$Z_\mu$ and the two-sided vacuum asymptotics give
\[
 \|\widetilde X(t)\|_{Z_\mu}
 \le C\|\widetilde X(t)\|_{W^{1,\infty}(S_\mu)}
 =C\mathfrak d(X(t)).
\]
Moreover,
\[
 \frac12\|U(t)\|_{Y_\mu^*}^2
 =H(Z(0))-F_1(X(t))
\]
is bounded by
$C\|Z(0)\|_{\vec Z_\mu}^2+C\|\widetilde X(t)\|_{Z_\mu}^2$ through
Corollary~\ref{Estimation of F_1}.

By \eqref{adapted-initial-cone-condition} and
Lemma~\ref{nonlinear-invariant-cone-lemma},
\[
 p(t)\ge c_*^2\kappa\|Z(0)\|_*
 e^{\lambda_{\kappa_*,\delta}t}.
\]
This contradicts \eqref{phase-norm-bootstrap-bound} once
\[
 t=\frac1{\lambda_{\kappa_*,\delta}}
 \log\!\left(
 \frac{2C\delta}{c_*^2\kappa\|Z(0)\|_*}
 \right).
\]
After one final use of norm equivalence, the theorem follows with
\begin{equation}\label{explicit-h-kappa}
 h_\kappa(\delta)
 :=\frac{\sqrt{\mu_1}}{\lambda_{\kappa_*,\delta}}
 =\frac{\sqrt{\mu_1}}
 {\sqrt{\mu_1}-C_{\kappa_*}\eta(\delta)},
 \qquad
 D_2(\kappa,\delta)=\frac{C\delta}{\kappa}.
\end{equation}
For fixed $\kappa$, $\eta(\delta)\to0$ implies
$h_\kappa(\delta)\to1$, so the time prefactor tends to
$1/\sqrt{\mu_1}$ as claimed.
\end{proof}

\subsection{Proof of Theorem~\ref{Main thm 0}}\label{proof-polytropic-instability-subsection}
\begin{proof}
The Lane--Emden scaling gives
\[
 R_\mu=R_1\mu^{(\gamma-2)/2},
 \qquad
 M(\mu)=M(1)\mu^{(3\gamma-4)/2}.
\]
Thus, for $6/5<\gamma<4/3$, one has $M'(\mu)<0$ and
$\frac d{d\mu}(M(\mu)/R_\mu)>0$, so $i_\mu=0$.  Scaling also conjugates
the radial linearized operators at different central densities up to a
positive factor, hence preserves their negative index.  The small-density
value in Theorem~\ref{turning pt principle} therefore yields
$n^u(\mu)=1$ for every $\mu>0$.

Fix the escape radius $\delta$ in the relevant conditional theorem (with
$\delta\le\bar\delta_\kappa$ in the cone case).  By
Theorem~\ref{Well-posedness} and
\eqref{high-energy-controls-bootstrap}, choose $e_*>0$ so small that
\begin{equation}\label{energy-implies-bootstrap}
 \mathcal N_\gamma(t)<e_*
 \quad\Longrightarrow\quad
 \mathfrak d(X(t))<\delta,
\end{equation}
and the Jacobian and physical-vacuum constants remain in a fixed restart
regime.  Set
\[
 \nu_*:=\min\left\{e_*,\frac{\delta}{2C}\right\},
\]
where $C$ is the constant in
\eqref{high-energy-controls-bootstrap}.  In the cone case these constants
may depend on $\kappa$.

Choose smooth compatible initial data of the appropriate type, as described
after Theorem~\ref{Main thm 0}, with
$0<\mathcal N_\gamma(0)<\min\{e_*/2,\varepsilon_*\}$.  Let
$[0,T_{\max})$ be the maximal classical existence interval and define
\[
 \tau_*:=\sup\left\{t<T_{\max}:\
 \mathcal N_\gamma(s)<e_*\ \text{for }0\le s<t\right\}.
\]
Let $T_{\mathrm{esc}}$ denote the logarithmic time in the corresponding
conditional theorem.

If $\tau_*\le T_{\mathrm{esc}}$, then either
$\mathcal N_\gamma$ reaches $e_*$ by time $\tau_*$, which already gives the
desired escape, or $\tau_*=T_{\max}$ while all continuation quantities stay
in the controlled regime.  The latter contradicts the restart property in
Theorem~\ref{Well-posedness}.

If $\tau_*>T_{\mathrm{esc}}$, the same restart property and uniqueness give
one classical solution on the whole interval $[0,T_{\mathrm{esc}}]$, and
\eqref{energy-implies-bootstrap} keeps it inside the deformation bootstrap
there.  Theorem~\ref{Main thm} or
Theorem~\ref{Main result when energy is higher} then gives a time
$t_*\le T_{\mathrm{esc}}$ for which $\mathfrak d(X(t_*))\ge\delta$.  Hence,
by \eqref{bootstrap-forces-N-gamma},
\[
 \mathcal N_\gamma(t_*)\ge C^{-1}\delta\ge2\nu_*.
\]
Taking $\nu_{\mathrm{low}}=\nu_*$ in part~(i), and the corresponding
$\nu_{\mathrm{cone}}(\kappa)=\nu_*$ in part~(ii), proves the theorem.
\end{proof}

\section{Proof of Lemma~\ref{Estimation of dH/dX-Lmu}}\label{proof-remainder-section}
\begin{proof}
Put $Y=X-\mathrm{id}$.  Since all fields are radial, write
\[
 X(x)=\chi(r)e_r,
 \qquad
 Y(x)=\xi(r)e_r,
 \qquad
 w(x)=\psi(r)e_r,
 \qquad e_r=\frac{x}{r}.
\]
For $\|Y\|_{W^{1,\infty}}$ sufficiently small, $\chi$ is strictly
increasing and
\begin{equation}\label{radial-deformation-parameters}
 a:=\chi'=1+\xi',
 \qquad
 b:=\frac{\chi}{r}=1+\frac{\xi}{r},
 \qquad
 J:=\det DX=ab^2
\end{equation}
remain in a fixed compact subset of $(0,\infty)$.  A regular radial
Lipschitz map fixes the origin, and therefore
\begin{equation}\label{radial-Lipschitz-profile-control}
 \|\xi'\|_{L^\infty(0,R_\mu)}
 +\|\xi/r\|_{L^\infty(0,R_\mu)}
 \le C\|Y\|_{W^{1,\infty}(S_\mu)}.
\end{equation}
Let
\[
 m_\mu(r):=4\pi\int_0^r\rho_\mu(s)s^2\,ds
\]
be the mass enclosed by the reference shell of radius $r$.

\medskip
\noindent\emph{Internal energy and the Piola stress.}
In radial variables,
\[
 \mathcal U(X)=4\pi\int_0^{R_\mu}
 \Phi\!\left(\frac{\rho_\mu}{J}\right)Jr^2\,dr.
\]
We derive its first variation.  Set
$X_\varepsilon=(\chi+\varepsilon\psi)e_r$.  Then
\[
 a_\varepsilon=a+\varepsilon\psi',
 \qquad
 b_\varepsilon=b+\varepsilon\frac{\psi}{r},
 \qquad
 J_\varepsilon=a_\varepsilon b_\varepsilon^2,
\]
so
\begin{equation}\label{radial-Jacobian-variation}
 \left.\frac d{d\varepsilon}J_\varepsilon\right|_{\varepsilon=0}
 =b^2\psi'+2ab\frac{\psi}{r}.
\end{equation}
If $q_\varepsilon=\rho_\mu/J_\varepsilon$, then
\[
 \frac d{d\varepsilon}
 \bigl[\Phi(q_\varepsilon)J_\varepsilon\bigr]
 =\bigl(\Phi(q_\varepsilon)-q_\varepsilon\Phi'(q_\varepsilon)\bigr)
 J_\varepsilon'
 =-P(q_\varepsilon)J_\varepsilon',
\]
where $P(q)=q\Phi'(q)-\Phi(q)$.  Combining this identity with
\eqref{radial-Jacobian-variation} gives the exact radial Piola-stress formula
\begin{equation}\label{radial-internal-first-variation}
 D\mathcal U(X)[w]
 =-4\pi\int_0^{R_\mu}
 P\!\left(\frac{\rho_\mu}{ab^2}\right)
 \left(b^2\psi'+2ab\frac{\psi}{r}\right)r^2\,dr.
\end{equation}
Equivalently, this is the radial form of
$D\mathcal U(X)[w]=-
\int P(\rho_\mu/J)\operatorname{cof}(DX):Dw$.

For $0\le\rho\le\|\rho_\mu\|_\infty$, define the two-component stress
\[
 \mathcal S_\rho(a,b)
 :=P\!\left(\frac{\rho}{ab^2}\right)(b^2,2ab).
\]
Writing $q=\rho/(ab^2)$, direct differentiation gives
\begin{align*}
 \partial_a\mathcal S_{\rho,1}
 &=-\frac{b^2}{a}\,qP'(q),
 &\partial_b\mathcal S_{\rho,1}
 &=2b\bigl(P(q)-qP'(q)\bigr),\\
 \partial_a\mathcal S_{\rho,2}
 &=2b\bigl(P(q)-qP'(q)\bigr),
 &\partial_b\mathcal S_{\rho,2}
 &=2a\bigl(P(q)-2qP'(q)\bigr).
\end{align*}
The normalization $P(0)=0$ and \eqref{property of pressure 2} imply
\[
 \lim_{s\to0+}\frac{P(s)}{s^{\gamma_0}}
 =\frac{C_{\gamma_0}}{\gamma_0},
 \qquad
 \lim_{s\to0+}\frac{sP'(s)}{s^{\gamma_0}}
 =C_{\gamma_0}.
\]
Hence $\rho^{-\gamma_0}D_{a,b}\mathcal S_\rho(a,b)$ extends continuously
to $\rho=0$, uniformly for $(a,b)$ in a fixed compact neighborhood of
$(1,1)$.  By uniform continuity and the integral form of Taylor's theorem,
there is a modulus $\omega_P(\delta)\to0$ such that, whenever
$|a-1|+|b-1|\le\delta$,
\begin{equation}\label{radial-stress-Taylor}
 \left|
 \mathcal S_\rho(a,b)-\mathcal S_\rho(1,1)
 -D\mathcal S_\rho(1,1)(a-1,b-1)
 \right|
 \le \omega_P(\delta)\rho^{\gamma_0}
 \bigl(|a-1|+|b-1|\bigr).
\end{equation}
Indeed, the left-hand side equals
\[
 \int_0^1
 \bigl[D\mathcal S_\rho((1,1)+\theta(a-1,b-1))
       -D\mathcal S_\rho(1,1)\bigr]
 (a-1,b-1)\,d\theta.
\]

Let
$\mathbf h=(\xi',\xi/r)$ and
$\mathbf q=(\psi',\psi/r)$.  Subtracting the value and first derivative of
\eqref{radial-internal-first-variation} at the identity gives
\begin{align*}
 &D\mathcal U(X)[w]-D\mathcal U(\mathrm{id})[w]
 -D^2\mathcal U(\mathrm{id})[Y,w]\\
 &\quad=-4\pi\int_0^{R_\mu}
 \bigl[\mathcal S_{\rho_\mu}(a,b)-\mathcal S_{\rho_\mu}(1,1)
 -D\mathcal S_{\rho_\mu}(1,1)\mathbf h\bigr]
 \cdot\mathbf q\,r^2\,dr.
\end{align*}
Using \eqref{radial-stress-Taylor}, Cauchy--Schwarz, and the identity
$|DY|^2=|\xi'|^2+2|\xi/r|^2$ (and similarly for $w$), we obtain
\begin{align*}
 &\left|D\mathcal U(X)[w]-D\mathcal U(\mathrm{id})[w]
 -D^2\mathcal U(\mathrm{id})[Y,w]\right|\\
 &\quad\le C\omega_P(\|Y\|_{W^{1,\infty}})
 \left(\int_{S_\mu}\rho_\mu^{\gamma_0}|DY|^2\,dx\right)^{1/2}
 \left(\int_{S_\mu}\rho_\mu^{\gamma_0}|Dw|^2\,dx\right)^{1/2}.
\end{align*}
Lemma~\ref{weighted-radial-gradient} therefore yields
\begin{equation}\label{radial-internal-remainder}
 \left|D\mathcal U(X)[w]-D\mathcal U(\mathrm{id})[w]
 -D^2\mathcal U(\mathrm{id})[Y,w]\right|
 \le C\omega_P(\|Y\|_{W^{1,\infty}})
 \|Y\|_{Z_\mu}\|w\|_{Z_\mu}.
\end{equation}

\medskip
\noindent\emph{Gravitational energy and Newton's shell theorem.}
For a spherically symmetric density, the gravitational energy may be built
shell by shell.  The Lagrangian shell labeled by $r$ carries the fixed mass
$dm_\mu(r)=4\pi\rho_\mu(r)r^2dr$ and is located at the physical radius
$\chi(r)$.  Because $\chi$ is increasing, the mass inside that shell is
exactly $m_\mu(r)$.  Newton's shell theorem says that the potential produced
there by the interior mass is $-m_\mu(r)/\chi(r)$.  Adding the shells from
the center outward counts every pair of shells once and gives
\begin{equation}\label{radial-gravitational-energy}
 \mathcal W(X)
 =-\int_0^{R_\mu}\frac{m_\mu(r)}{\chi(r)}\,dm_\mu(r)
 =-4\pi\int_0^{R_\mu}
 \frac{m_\mu(r)\rho_\mu(r)r^2}{\chi(r)}\,dr.
\end{equation}
This is equivalent to
$-\frac12\iint\rho(y)\rho(z)|y-z|^{-1}\,dy\,dz$, or to
$-(8\pi)^{-1}\int|\nabla V|^2$, with the sign convention
$\Delta V=4\pi\rho$.

Differentiating \eqref{radial-gravitational-energy} along
$\chi+\varepsilon\psi$ gives
\begin{equation}\label{radial-gravity-first-variation}
 D\mathcal W(X)[w]
 =4\pi\int_0^{R_\mu}
 m_\mu(r)\rho_\mu(r)r^2
 \frac{\psi(r)}{\chi(r)^2}\,dr.
\end{equation}
Set $z=\xi/r$, so $\chi=r(1+z)$.  At the identity,
\begin{align*}
 D\mathcal W(\mathrm{id})[w]
 &=4\pi\int_0^{R_\mu}m_\mu\rho_\mu\psi\,dr,\\
 D^2\mathcal W(\mathrm{id})[Y,w]
 &=-8\pi\int_0^{R_\mu}m_\mu\rho_\mu z\psi\,dr.
\end{align*}
Thus the remainder is exactly
\[
 4\pi\int_0^{R_\mu}m_\mu\rho_\mu\psi
 \bigl[(1+z)^{-2}-1+2z\bigr]dr.
\]
For $|z|\le\delta<1/2$,
\[
 |(1+z)^{-2}-1+2z|\le C\delta|z|.
\]
Moreover, $m_\mu(r)/r^3$ extends continuously to $r=0$ with value
$4\pi\rho_\mu(0)/3$ and is bounded on $[0,R_\mu]$.  Hence
\begin{align*}
 &\left|D\mathcal W(X)[w]-D\mathcal W(\mathrm{id})[w]
 -D^2\mathcal W(\mathrm{id})[Y,w]\right|\\
 &\quad\le C\|Y\|_{W^{1,\infty}}
 \int_0^{R_\mu}\rho_\mu r^2|\xi||\psi|\,dr\\
 &\quad\le C\|Y\|_{W^{1,\infty}}
 \left(\int_0^{R_\mu}\rho_\mu|\xi|^2r^2\,dr\right)^{1/2}
 \left(\int_0^{R_\mu}\rho_\mu|\psi|^2r^2\,dr\right)^{1/2}.
\end{align*}
Therefore
\begin{equation}\label{radial-gravity-remainder}
 \left|D\mathcal W(X)[w]-D\mathcal W(\mathrm{id})[w]
 -D^2\mathcal W(\mathrm{id})[Y,w]\right|
 \le C\|Y\|_{W^{1,\infty}}
 \|Y\|_{Y_\mu}\|w\|_{Y_\mu}.
\end{equation}

Finally, $(\mathrm{id},0)$ is a critical point of the full Hamiltonian, and
Lemma~\ref{second variation of H} identifies
$D^2(\mathcal U+\mathcal W)(\mathrm{id})$ with
$\widetilde L_\mu$.  Combining
\eqref{radial-internal-remainder},
\eqref{radial-gravity-remainder}, and
Lemma~\ref{X Y_mu less than BAX X_mu}, the conclusion follows with
\[
 \omega(\delta):=C\bigl(\omega_P(\delta)+\delta\bigr).
\]
\end{proof}

\appendix
\section{The Luo--Xin--Zeng high-order energy}\label{appendix-LXZ-energy}
For reference, we record the radial energy from \cite[Section~9]{Xinzhouping}
that underlies Theorem~\ref{Well-posedness}.  After rescaling the reference
ball to $I=(0,1)$, set
\[
 \sigma(x):=x\rho_0(x)^{\gamma-1},\qquad
 \nu:=\frac{2-\gamma}{2\gamma-2},\qquad
 \ell:=3+2\left\lceil\frac12+\nu\right\rceil,
\]
and let $\zeta$ be the cutoff of \cite[Section~9]{Xinzhouping}, equal to one
near $x=0$ and zero away from the center.  With $\|\cdot\|_j$ denoting the
$H^j(I)$ norm, their high-order velocity energy is
{\small
\begin{align}
 \widetilde{\mathcal E}_\gamma(v,t)
 := {}&\left\|\sigma\left(\frac{\sigma}{x}\right)^\nu
       \partial_t^\ell v_x\right\|_0^2
 +\left\|\left(\frac{\sigma}{x}\right)^{1+\nu}
       \partial_t^\ell v\right\|_0^2 \notag\\
 &+\sum_{j=1}^{(\ell+1)/2}\left\{
   \left\|\sigma^{3/2+\nu}
    \partial_t^{\ell-2j+1}\partial_x^{j+1}v\right\|_0^2
   +\sum_{i=0}^{j}
    \left\|\sigma^{1/2+\nu}
    \partial_t^{\ell-2j+1}\partial_x^iv\right\|_0^2
   \right\} \notag\\
 &+\sum_{j=1}^{(\ell-1)/2}\left\{
   \left\|\sigma^{2+\nu}
    \partial_t^{\ell-2j}\partial_x^{j+2}v\right\|_0^2
   +\sum_{i=-1}^{j}
    \left\|\sigma^{1+\nu}
    \partial_t^{\ell-2j}\partial_x^{i+1}v\right\|_0^2
   \right\} \notag\\
 &+\sum_{j=1}^{(\ell+1)/2}\left\{
   \left\|\zeta\sigma\partial_t^{\ell-2j+1}v\right\|_{j+1}^2
   +\left\|\zeta\partial_t^{\ell-2j+1}v\right\|_j^2
   +\left\|\zeta\frac{\partial_t^{\ell-2j+1}v}{x}\right\|_{j-1}^2
   \right\} \notag\\
 &+\sum_{j=1}^{(\ell-1)/2}\left\{
   \left\|\zeta\sigma\partial_t^{\ell-2j}v\right\|_{j+2}^2
   +\left\|\zeta\partial_t^{\ell-2j}v\right\|_{j+1}^2
   +\left\|\zeta\frac{\partial_t^{\ell-2j}v}{x}\right\|_j^2
   \right\}.
 \label{LXZ-high-order-energy}
\end{align}
}
The time derivatives at $t=0$ are generated recursively from the radial
equation.  The density assumptions used in \cite[(3.3)--(3.4)]{Xinzhouping}
are
\[
 \rho_0(x)>0\quad(0\le x<1),\qquad \rho_0(1)=0,
 \qquad -\infty<\partial_x(\rho_0^{\gamma-1})(1)<0,
\]
together with the stated interior regularity.  The local existence argument
uses the estimates surrounding \cite[(9.5), (9.10), (9.11)]{Xinzhouping}
and the approximation in their Subsection~9.4; uniqueness in the range used
here is \cite[Theorem~2.2]{Xinzhouping}.  The uniform restart property in
Theorem~\ref{Well-posedness} is the time-translation and change-of-chart
consequence of reapplying this local construction while these controlling
quantities remain in a fixed bounded set.

For completeness, if the initial radial relabeling is
$X_0(x)=\chi_0(r)e_r$, mass conservation gives the pushed-forward density
\[
 \rho_0^{\rm phys}(\chi_0(r))
 =\frac{\rho_\mu(r)}{\chi_0'(r)(\chi_0(r)/r)^2}.
\]
If $X_0$ is a smooth radial diffeomorphism sufficiently close to the
identity, the denominator is bounded above and below and distance to the
boundary is distorted only by fixed factors; hence the physical-vacuum
behavior is preserved.  The remaining high-order compatibility conditions
are imposed on the smooth data used in Theorem~\ref{Main thm 0}.

\section*{Acknowledgments}

The authors thank Professor Chongchun Zeng for helpful discussions.

\bibliographystyle{amsplain}
\bibliography{reference}

@book{subrahmanijanchandrasekhar_1967_an,
  author    = {Chandrasekhar, Subrahmanyan},
  title     = {An Introduction to the Study of Stellar Structure},
  publisher = {Dover Publications},
  address   = {New York},
  year      = {1967}
}

@incollection{Gidas1981SymmetryOP,
  author    = {Gidas, Basilis and Ni, Wei-Ming and Nirenberg, Louis},
  title     = {Symmetry of Positive Solutions of Nonlinear Elliptic Equations in $\mathbb{R}^n$},
  booktitle = {Mathematical Analysis and Applications, Part A},
  series    = {Advances in Mathematics Supplementary Studies},
  volume    = {7A},
  pages     = {369--402},
  publisher = {Academic Press},
  address   = {New York},
  year      = {1981}
}

@article{heinzle_2003_newtonian,
  author  = {Heinzle, J. Mark and Uggla, Claes},
  title   = {Newtonian Stellar Models},
  journal = {Annals of Physics},
  volume  = {308},
  pages   = {18--61},
  year    = {2003}
}

@article{lin_1997_stability,
  author  = {Lin, Song-Sun},
  title   = {Stability of Gaseous Stars in Spherically Symmetric Motions},
  journal = {SIAM Journal on Mathematical Analysis},
  volume  = {28},
  number  = {3},
  pages   = {539--569},
  year    = {1997}
}

@article{jang_2008_nonlinear,
  author  = {Jang, Juhi},
  title   = {Nonlinear Instability in Gravitational {Euler--Poisson} Systems for $\gamma=6/5$},
  journal = {Archive for Rational Mechanics and Analysis},
  volume  = {188},
  number  = {2},
  pages   = {265--307},
  year    = {2008}
}

@article{Jang_2013_nonlinear,
  author  = {Jang, Juhi},
  title   = {Nonlinear Instability Theory of {Lane--Emden} Stars},
  journal = {Communications on Pure and Applied Mathematics},
  volume  = {67},
  number  = {9},
  pages   = {1418--1465},
  year    = {2014}
}

@article{Collapse_solution,
  author  = {Guo, Yan and Had\v{z}i\'{c}, Mahir and Jang, Juhi and Schrecker, Matthew},
  title   = {Gravitational Collapse for Polytropic Gaseous Stars: Self-Similar Solutions},
  journal = {Archive for Rational Mechanics and Analysis},
  volume  = {246},
  number  = {2--3},
  pages   = {957--1066},
  year    = {2022}
}

@article{Expanding_solution,
  author  = {Cheng, Ming and Cheng, Xing and Lin, Zhiwu},
  title   = {Expanding Solutions Near Unstable {Lane--Emden} Stars},
  journal = {Communications in Mathematical Physics},
  volume  = {406},
  number  = {7},
  pages   = {Article 165},
  year    = {2025},
  doi     = {10.1007/s00220-025-05308-3}
}

@article{separable,
  author  = {Lin, Zhiwu and Zeng, Chongchun},
  title   = {Separable Hamiltonian {PDEs} and Turning Point Principle for Stability of Gaseous Stars},
  journal = {Communications on Pure and Applied Mathematics},
  volume  = {75},
  number  = {11},
  pages   = {2511--2572},
  year    = {2022}
}

@article{lin_2024_nonlinear,
  author  = {Lin, Zhiwu and Wang, Yucong and Zhu, Hao},
  title   = {Nonlinear Stability of Non-Rotating Gaseous Stars},
  journal = {Mathematische Annalen},
  volume  = {391},
  number  = {1},
  pages   = {843--880},
  year    = {2025}
}

@article{rein_2003_nonlinear,
  author  = {Rein, Gerhard},
  title   = {Non-Linear Stability of Gaseous Stars},
  journal = {Archive for Rational Mechanics and Analysis},
  volume  = {168},
  pages   = {115--130},
  year    = {2003}
}

@article{aly_1992_on,
  author  = {Aly, Jean-Jacques and Perez, Jerome},
  title   = {On the Stability of a Gaseous Sphere Against Non-Radial Perturbations},
  journal = {Monthly Notices of the Royal Astronomical Society},
  volume  = {259},
  pages   = {95--103},
  year    = {1992}
}

@inproceedings{Antonov_1987,
  author    = {Antonov, V. A.},
  title     = {Solution of the Problem of Stability of a Stellar System with {Emden}'s Density Law and a Spherical Distribution of Velocities},
  booktitle = {Structure and Dynamics of Elliptical Galaxies},
  series    = {IAU Symposium},
  volume    = {127},
  pages     = {531--548},
  year      = {1987}
}

@article{Lebovitz,
  author  = {Lebovitz, Norman R.},
  title   = {On {Schwarzschild}'s Criterion for the Stability of Gaseous Masses},
  journal = {The Astrophysical Journal},
  volume  = {142},
  pages   = {229--242},
  year    = {1965}
}

@article{GU2016662,
  author  = {Gu, Xumin and Lei, Zhen},
  title   = {Local Well-Posedness of the Three Dimensional Compressible {Euler--Poisson} Equations with Physical Vacuum},
  journal = {Journal de Math\'{e}matiques Pures et Appliqu\'{e}es},
  volume  = {105},
  number  = {5},
  pages   = {662--723},
  year    = {2016}
}

@article{Xinzhouping,
  author  = {Luo, Tao and Xin, Zhouping and Zeng, Huihui},
  title   = {Well-Posedness for the Motion of Physical Vacuum of the Three-Dimensional Compressible {Euler} Equations with or without Self-Gravitation},
  journal = {Archive for Rational Mechanics and Analysis},
  volume  = {213},
  number  = {3},
  pages   = {763--831},
  year    = {2014}
}

@book{Hardy,
  author    = {Kufner, Alois and Maligranda, Lech and Persson, Lars-Erik},
  title     = {The Hardy Inequality: About Its History and Some Related Results},
  publisher = {Vydavatelsk\'{y} Servis},
  address   = {Plze\v{n}},
  year      = {2007}
}

@article{lin_2021_instability,
  author  = {Lin, Zhiwu and Zeng, Chongchun},
  title   = {Instability, Index Theorem, and Exponential Trichotomy for Linear Hamiltonian {PDEs}},
  journal = {Mem. Amer. Math. Soc.},
  volume  = {275},
  number  = {1347},
  pages   = {v+136},
  year    = {2022}
}
\end{document}